\title{When Gr\"{u}nbaum meets Poncelet\\
{\Large -- Infinite Classes of Movable $(n_4)$ Configurations --}\\
}
\author{ \it Leah Wrenn Berman, G\'{a}bor G\'{e}vay, \\ \it J\"{u}rgen Richter-Gebert, Serge Tabachnikov}

\documentclass{article}
\usepackage{graphicx}
\usepackage{amsthm}
\newtheorem*{theorem*}{Theorem}
\newtheorem*{theoremA*}{Theorem A}
\newtheorem*{theoremB*}{Theorem B}
\newtheorem*{theoremC*}{Theorem C}

\newtheorem{construction}{Construction}
\newtheorem{theorem}{Theorem}
\newtheorem{notheorem}{Not--a--Theorem}

\newtheorem{lemma}{Lemma}[section]

\newtheorem{definition}{Definition}

\usepackage{amssymb}
\usepackage[font=small,labelfont=bf]{caption} 
\usepackage{color}
\usepackage{mathtools}
\usepackage{amsmath}
\usepackage[dvipsnames]{xcolor}
\usepackage[all]{nowidow}
\usepackage{float}

\usepackage{soul} 
\usepackage[normalem]{ulem} 

\usepackage[color links=true, backref=page]{hyperref} 

\hypersetup{
  colorlinks,
  citecolor=blue,
  linkcolor=Red,
  urlcolor=Blue
  }

\date{\vspace{-2ex}}

\begin{document}
\newpage
\maketitle





\def\upto{, \ldots ,}
\def\lines{\square}
\def\points{\bigcirc}
\def\lines{\vee} 
\def\points{\wedge}
\parskip=1mm
\def\meet{\wedge}
\def\join{\vee}

\def\pts[#1]{\wedge_{#1}} 
\def\lns[#1]{\vee_{#1}} 

\setlength{\parindent}{.5cm}

\newcommand{\del}[1]{\textcolor{red}{\sout{#1}}}
\newcommand{\new}[1]{\textcolor{ForestGreen}{#1}}

\begin{abstract}
We study relations between $(n_4)$ incidence configurations and the classical Poncelet Porism. Poncelet's result studies two conics and a sequence of points and lines 
that inscribes one conic and circumscribes the other. Poncelet's Porism states that whether  this sequence  closes up  after $m$ steps only depends on the conics 
and not on the initial point of the sequence. In other words: Poncelet polygons are movable. 

We transfer this motion into a flexibility statement about a large class of 
$(n_4)$ configurations, which are configurations where 4 (straight) lines pass through each point and four points lie on each line. A first 
instance of such configurations in real geometry 
had been given by Gr\"{u}nbaum and Rigby in their classical 1990 paper where they 
constructed the first known real geometric realisation
of a well known
combinatorial $(21_4)$ configuration
(which had been studied by Felix Klein),
now called  the Gr\"{u}nbaum--Rigby configuration.

Since then, there has been an intensive search 
for movable $(n_4)$ configurations, 
but it is very surprising that the Gr\"{u}nbaum--Rigby
$(21_4)$ configuration  admits nontrivial motions. It is well-known that the Gr\"unbaum--Rigby configuration is the smallest example of an infinite class of 
 $(n_4)$ configurations, the \emph{trivial celestial configurations}. A major result of this paper is that we show that all {\it trivial celestial configurations} are movable via Poncelet's Porism and results about properties of Poncelet grids. 
 
Alternative approaches via geometry of billiards, in-circle nets, and pentagram maps that relate the subject to 
discrete integrable systems are given as well. 
\end{abstract}

\tableofcontents

\parskip=2mm
\newpage
\section{Introduction}

\rightline{\small \it It is a good rule of thumb that if in the course of}
\rightline{\small \it  mathematical research an ellipse appears,}
\rightline{\small \it there is likely to be an interesting result nearby.}
\vskip1mm
\rightline{\small \it From {\it Finding Ellipses, \cite{DGSV18}}}

\vskip6mm

Some connections in mathematics come as a surprise.
This happened to us while exploring the possibility of nontrivial motions
of the classical  Gr\"{u}nbaum--Rigby $(21_4)$ configuration
\cite{GrRi90}. 
We asked if it is possible to fix four points of the configuration and move a fifth point in such a way that all the other incidences of the configuration are retained.
The existence of such a motion was conjectured by one of us in \cite{GePo23}. 
The $(n_4)$ configurations are collections of points and lines such that exactly 4 points lie on each line and, dually, four lines pass through each point. The proof of the construction of the Gr\"unbaum--Rigby configuration relies on the symmetries of a regular heptagon, so it seemed unlikely that it would exhibit nontrivial motions. Nevertheless, after finding a first construction for the Gr\"{u}nbaum--Rigby $(21_4)$ configuration that  provably 
exhibits such a nontrivial motion (for details of this construction see the companion paper~\cite{BGRGT24b}), a closer inspection of the construction exhibited structures that looked similar to the geometric situation 
in the classical Poncelet Porism \cite{Cen16a,Cen16b,Fl09,GriHa78,Pon1865}. 

Poncelet's Porism states that polygons that are simultaneously inscribed in a conic $\mathcal{A}$ and circumscribed around another conic $\mathcal{B}$ admit a one-dimensional 
space of motions while keeping their incidence properties with respect to $\mathcal{A}$ and~$\mathcal{B}$. In a sense, it looked like exactly this motion is responsible for the 
movability of the Gr\"unbaum--Rigby $(21_4)$ configuration and that the same technique could be applied to other configurations of similar type as well. It turned out that the Gr\"unbaum--Rigby $(21_4)$ configuration is only the tip 
of the iceberg and underneath lies an infinite class of movable $(n_4)$ configurations owing their non-trivial motion to Poncelet's Porism. This article exhibits the most general class 
that we were able to find that has this property: the {\it trivial celestial configurations} (see \cite[\S3.7 - 8]{Gr09}, under the name trivial \emph{$k$-astral} configurations). The term \emph{trivial} may be slightly misleading here. They are trivial in the sense that they 
arise in a kind of natural way that makes it easy to prove the underlying incidence property. But it is in particular this kind of ``arising in a natural way'' that makes them related to a Poncelet polygon.

In this article we show how the two concepts are related. We try to be constructive and explicit here. Whenever possible we give approaches that are related to constructions 
that (at least in principle) can generate drawings of the underlying configurations in a {\it generic} position. Generic in our context means that the constructions exploit the full 
range of flexibility that is available through Poncelet's Porism. Usually, this will be the choice of five independent points. Four of them fix a projective transformation and the remaining 
point controls two additional degrees of freedom.

Both concepts `$(n_4)$ configurations' as well as `Poncelet's Porism' are intrinsically 
concepts of projective geometry \cite{Cox92,Cox94,RG11}, since they only make statements about incidence and tangency relations of points, lines and conics and do not require 
measurement of genuine Euclidean relations like angles, distances or radii. 
Nevertheless, they reside in a rich area of related fields of classical and modern mathematics.  
Our approaches are also related to areas like 
discrete integrable systems \cite{BaFa91,SchTa10,IzTa17,Ta16},
geometry of billiards  \cite{IzTa17,LaTa07,Ta05,DraRa11},
pentagram maps \cite{Sch92,OST10},
incircle nets \cite{AkBo18},
elementary geometry,
and many more. In particular, we  demonstrate the relation to the dynamics of elliptical billiards and show how those considerations can lead to a totally different 
approach to the main results here.
Geometric constructions for Poncelet polygons that lead to explicit constructions for movable $(n_4)$ configurations
 are discussed on our companion paper~\cite{BGRGT24b}, where we also discuss
algebraic characterisations in terms of projectively invariant bracket expressions in the sense of
\cite{RG95,RG06,RG11}.

Since our work relates fields that are usually not considered in the same context, we decided to write this article to be as self-contained as possible. Also, we  explicitly introduce concepts 
in all relevant areas.  The reader may excuse the relatively long exposition for the benefit of (hopefully) better accessibility. All drawings in the article are made with the interactive 
geometry software Cinderella
\cite{RiKo99}, and experimenting within this software was crucial in the process of discovering the results in the paper. A collection of interactive animations related to this article can be found at:  \href{https://mathvisuals.org/Poncelet/}{\tt https://mathvisuals.org/Poncelet/}.

\subsection{Gr\"{u}nbaum--Rigby and other $(n_4)$ configurations}

To the best of our knowledge, the first mention of an $(n_4)$ configuration goes back to Felix Klein and his groundbreaking article  {\it Ueber die Transformationen siebenter 
Ordnung der elliptischen Funktionen} from 1878 \cite{Kl1878}.
Besides many other influential concepts (like, for instance, containing the first published image of a tiling in the Poincar\'{e} disk $\ldots$ before Poincar\'{e} invented the Poincar\'{e} disk) it contained 
the paragraph shown in Figure~\ref{fig:klein}. 

\begin{figure}[H]
\noindent
\begin{center}
\includegraphics[width=0.85\textwidth]{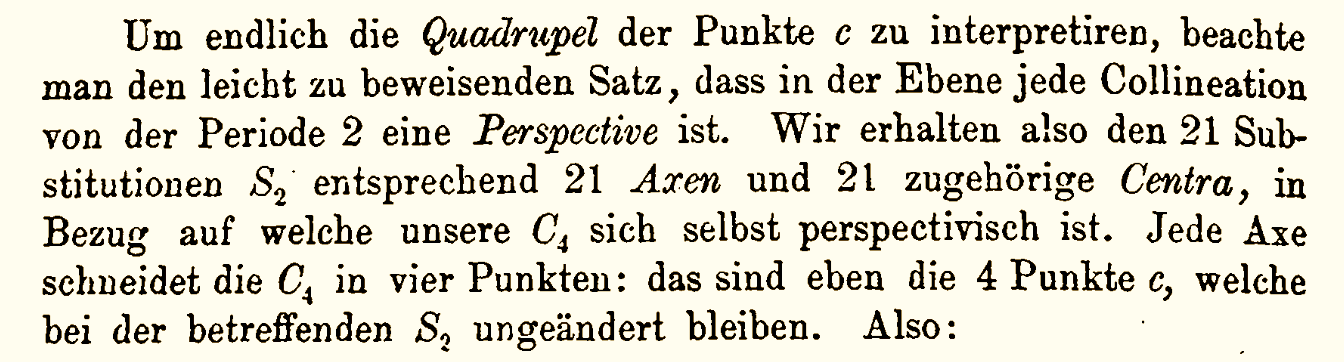}
\end{center}
\captionof{figure}{The first $(21_4)$ configuration mentioned in an article of Felix Klein.}~\label{fig:klein}
\end{figure}

There, Felix Klein describes a configuration that arises in the context of the algebraic curve $x^3y+y^3z+z^3x=0$, that has 21 points 
({\it Centra}) and 21 lines ({\it Axen}) such that on each line there are 4 points and through each point there are 4 lines. The configuration in Felix Klein's work is embedded in complex 
space, and there is no projective transformation that makes  all elements real, simultaneously.

It took over 110 years until an entirely real incidence configuration of the same combinatorial type was presented by Gr\"{u}nbaum and Rigby in their now classical paper from 
1990 \cite{GrRi90}. 
Although geometric $(n_{3})$ configurations had been studied since the mid-1800s, this was the first publication of a real realization of an $(n_{k})$ configuration with $k \geq 4$ that got serious attention.\footnote 
{One historic comment is appropriate here. As early as 1898 there was another mention of an  $(n_4)$ configuration, in fact over the real numbers, by the mostly unknown Hungarian mathematician   Leopold Klug. He described a
$(35_4)$ configuration 
that can be constructed as a union of two smaller configurations which are
duals of each 
other~\cite{Klu1898,  BoGePi15}. To the best of our knowledge this is the only place where real $(n_4)$ configurations have been discussed prior to the Gr\"{u}nbaum and Rigby $(21_4)$.}
It was the starting point for the search for more complex (real) incidence configurations that exhibit high degrees of symmetry 
and specific incidence properties and of the modern study of configurations. Figure~\ref{fig:GrRi} shows their original drawing. Since then, many articles have appeared exploring the 
existence of $(n_k)$ configurations for many parameters of $n$ (the number of points, which by counting equals the number of lines) and $k$ (the number of incidences per object). Since the number of publications on 
this topic is huge, here we only refer to Gr\"{u}nbaum's book \cite{Gr09}, which gives an overview of the topic until 2009.

\begin{figure}[h]
\smallskip
\noindent
\begin{center}
\includegraphics[width=0.65\textwidth]{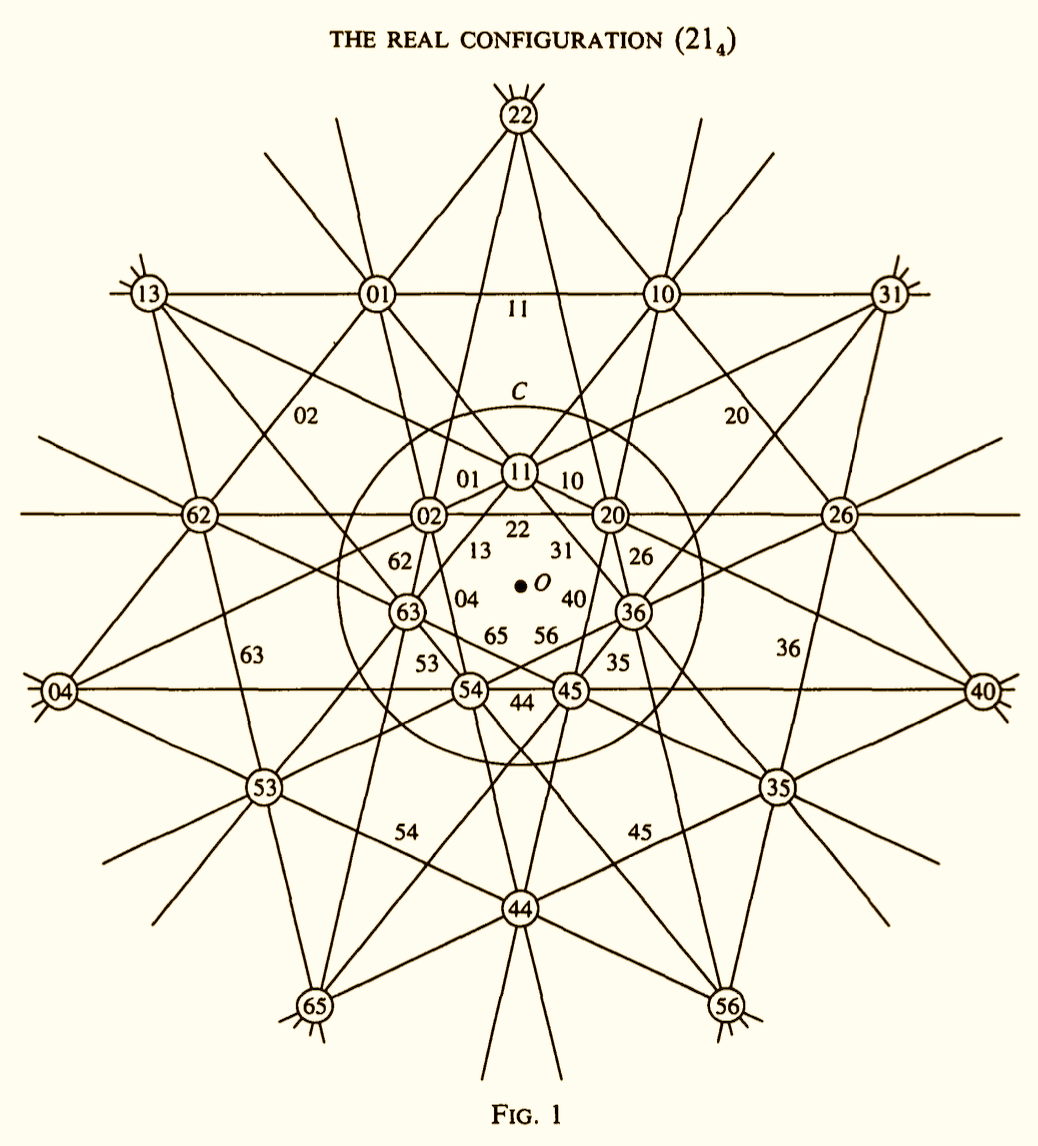}
\end{center}
\captionof{figure}{Gr\"{u}nbaum and Rigby's rendering of the real $(21_4)$  configuration \cite{GrRi90}, also exhibiting a circle of self-polarity.}~\label{fig:GrRi}
\end{figure}

Of special interest were those configurations that could be realised with rotational symmetry,  
and a class of configurations
was developed based on starting with the vertices
of a regular polygon and iteratively constructing various
diagonals and intersections of diagonals. These are called
celestial\footnote{Configurations constructed in this way have had various names, including \emph{$k$-astral, stellar} and \emph{celestial}. A \emph{$k$-astral} $(n_{4})$ configuration is defined \cite[p. 34]{Gr09} to have $k$ symmetry classes of points and lines.  At one point, most if not all of the known $(n_{4})$ configurations with $k$ symmetry classes were constructed as described below (also see $\cite[\S 3.7-8]{Gr09}$), but since there are also lots of other $(n_{4})$ configurations with $k$ symmetry classes that are \emph{not} constructed in this way, the literature has shifted to calling this sort of configuration \emph{celestial} rather than $k$-astral.} configurations~\cite{BerBer14}.

We will describe this process in detail in Section~\ref{sect:fromPoncelet}. In the notation 
describing such constructions, the  $(21_4)$ configuration above is denoted $7\#(3,1;2,3;1,2)$: Start with a regular 7-gon; connect pairs of points that are three steps 
apart by a line, of those lines intersect those that are one step apart; of those points  connect pairs that are 2 steps apart, etc.

In  \cite{GR00b,Gr00a, Gr09} criteria were provided determining under which conditions such a construction sequence leads to an $(n_4)$ configuration. Due to the high number of incidences and symmetry
in such a configuration, it was generally expected by researchers in the field that such configurations only admit trivial motions (those arising just from projective transformations). The main theorem of this 
article will be the fact that the contrary is the case for a huge class of naturally arising $(n_4)$ configurations (among them the Gr\"{u}nbaum--Rigby configuration). We will show 
that all configurations of type
\[
m\#(a_1,b_1;a_2,b_2\upto a_k,b_k)
\]
for which some mild non-degeneracy conditions hold and for which the multisets $a_i$ and $b_i$ are identical  admit 2 non-trivial degrees of movability. 

\subsection{Poncelet's Porism}
 Polygons also play a crucial role in Poncelet's Porism. This theorem is one of the earliest and at the same time deepest facts in projective geometry. It was discovered in  1813 while Poncelet was a prisoner of war and published in 1822~\cite{Pon1865}, at a time where projective geometry was just being developed.
Although the  statement of Poncelet's Porism is easy to make, its proof is by no means obvious, and it has far-reaching connections to many areas of mathematics like elliptic functions, integrable systems, algebraic geometry, dynamical systems, topology and many more.

\noindent
{\bf Poncelet's Porism:\ }{\it
Let $\mathcal{A}$ and $\mathcal{B}$ be two conics in the projective plane.
If $(p_1\upto p_{m})$ is a polygon whose vertices lie on $\mathcal{A}$  and whose edges are tangent to $\mathcal{B}$, then there exists such a  polygon starting with an arbitrary point on $\mathcal{A}$.
}

A polygon that is inscribed in $\mathcal{A}$ and circumscribed around $\mathcal{B}$ will be called a 
{\it Poncelet polygon} in the rest of this article. One could also think of this process constructively.
From a point  $p_1$  on $\mathcal{A}$
draw a tangent to $ \mathcal{B}$ and intersect this tangent with   $\mathcal{A}$. Take the intersection that is different from $p_1$ and call it $p_2$. From there, take the other tangent to $\mathcal{B}$ and iterate.
If such a sequence closes up after $m$ steps, it will do so for any starting point on $\mathcal{A}$. Viewing a Poncelet polygon as such a construction sequence sheds a light on both the algebraic and projective nature of Poncelet's Porism.
The conics may be located so that the intersection points or tangents become complex or lie at infinity. And indeed, Poncelet's theorem unfolds its full beauty and power when the situation is interpreted in the complex projective plane. Nevertheless, here we will usually consider situations where all elements of a Poncelet polygon are real. One is on the safe side for staying real if, for instance,  $\mathcal{A}$ and  $\mathcal{B}$
are a pair of nested ellipses with  $\mathcal{B}$ inside  $\mathcal{A}$.

\begin{figure}[h]
\smallskip
\noindent
\begin{center}
\includegraphics[width=0.45\textwidth]{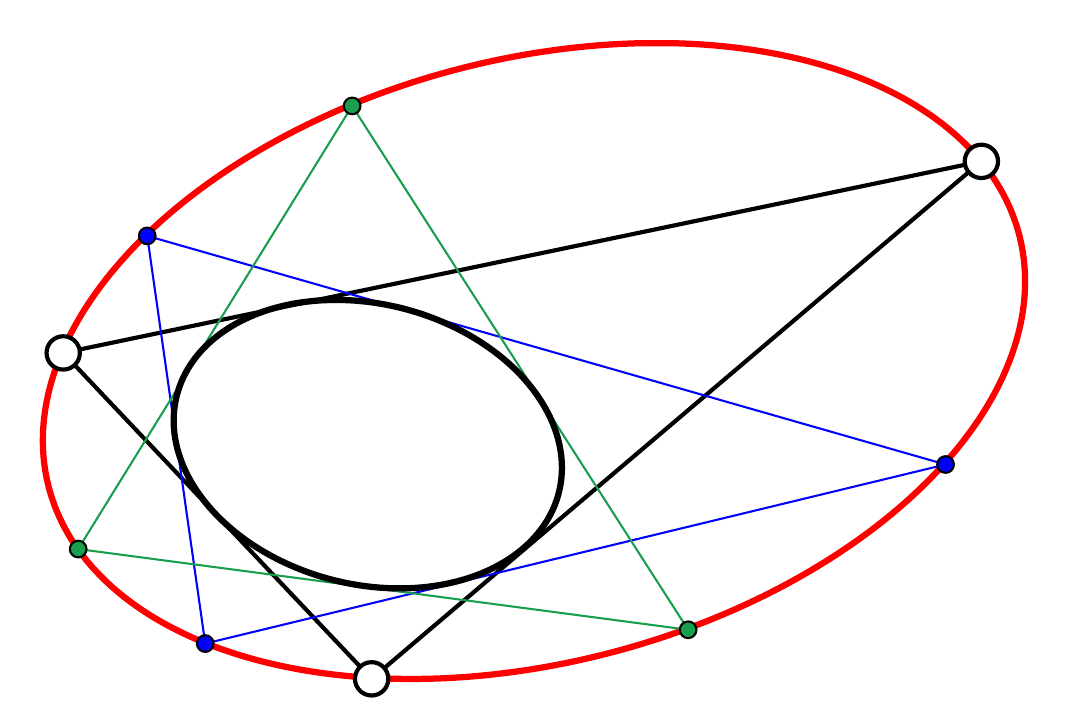}\hfill
\includegraphics[width=0.45\textwidth]{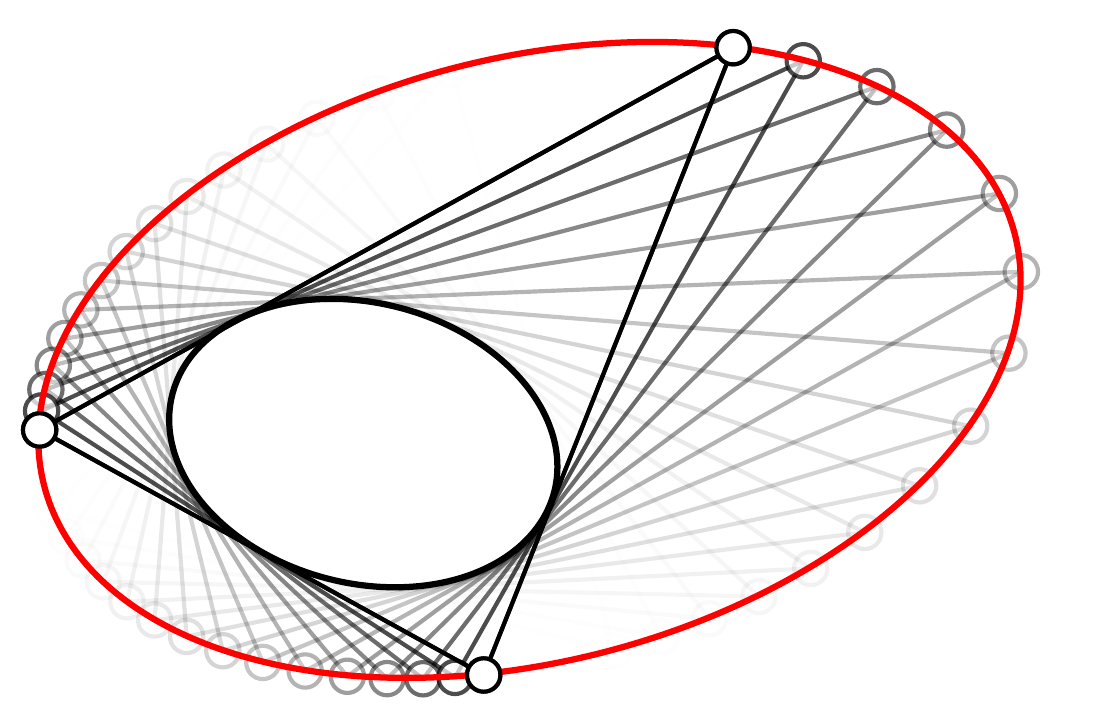}
\end{center}
\captionof{figure}{Illustrations of Poncelet's Porism in the case of a triangle}~\label{fig:Ponc}
\end{figure}

Figure~\ref{fig:Ponc} illustrates the situation for a Poncelet triangle. The picture on the left shows that the existence of one closing triangle (say the black one) implies the existence of others (blue and green). The picture on the right emphasises the dynamic nature of Poncelet's Porism: we may move the starting point along conic $\mathcal{A}$ and get a continuous family of Poncelet $m$-gons. Two things should be emphasised here:
The way the term $m$-gon is interpreted in the context of the Poncelet Porism does not imply convexity. Self-intersections and star-polygons are fully admissible. The second observation is that slightly 
changing the position of the conics will in general immediately lead to the Poncelet chain `breaking up' and no longer closing up to form an $m$-gon. For huge $m$, the exact position of the conics is numerically very sensitive. It turns out that  for each situation of two conics that support a Poncelet $m$-gon (up to projective transformations) there still is a one-parameter family of continuous movements that does not break  the Poncelet polygon. This one-parameter family, together with the movement of the point $p_1$ along the conic, are the two degrees of freedom that correspond to the two degrees of freedom for non-trivial motions of certain $(n_4)$ configurations. 

Although Poncelet's Porism is projective in nature, many of the classical proofs rely on an application of 
elliptic functions. Since there is enough good literature on that subject 
\cite{Ber87,Cen16a,Cen16b,DraRa11,GriHa78,HaHu15}, we will not present any proofs of Poncelet's Porism here.
However, we want to explicitly point to a relatively recent article by Halbeisen and Hungerb\"{u}hler \cite{HaHu15}
who give a proof of the Theorem that is entirely based on projective arguments and reduces Poncelet's Porism
to iterated applications of Pascal's and Brianchon's theorem. Such an approach is very much in the spirit of our  article. We also try to be as projective as possible when doing our constructions.

\subsection{Movable $(n_4)$ configurations}

The incidence situation of $(n_4)$ configurations shows that we have to expect relatively rigid objects.
A rough degree of freedom count shows the following. In an $(n_4)$  configuration there are $n$ points and $n$ lines. Each of them has two degrees of freedom. Subtracting 8 degrees of freedom for trivial actions by a planar projective transformation leaves us with $4n-8$ degrees of freedom if there were no incidences.
However, every point is incident to four lines, which eliminates $4n$ degrees of freedom. This leaves us with a degree of freedom count of $4n-4n-8=-8$. Such negative degrees of freedom must be compensated by the presence of geometric incidence theorems that create some of the incidences {\it for free}. So in general, one would expect that
$(n_4)$ configurations are relatively rigid objects, and indeed, many of them presumably are.

Nevertheless, there are several classes of $(n_4)$ configurations that are known to be movable. Some of them are movable even with the additional requirement of keeping rotational symmetry. 
Such classes were first constructed by one of us in~\cite{Ber06} providing examples of provably movable $(n_4)$ configurations. The smallest $n$ achieved in this first publication is $n=44$. 
Later \cite{Ber08} that bound was improved to $n=30$. One interesting aspect of the current work is that we improve this bound to $n=21$:
We show that  the classical Gr\"{u}nbaum--Rigby $(21_4)$ admits a non-trivial movement with two degrees of freedom. We also show that this is only the smallest representative of a huge class: 
the trivial $k$-celestial configurations. All of them turn out to have non-trivial movement. Formerly, the only constructions of trivial celestial configurations that were known were based on nesting
regular $n\over k$-gons and proving the requirements that lead to the $(n_4)$ properties by trigonometric calculations
on the angles involved. Our constructions show that $n\over k$-gons can be replaced by Poncelet $n\over k$-gons,
and the circles supporting the $n\over k$-gons become conics. We show that the trigonometric calculations can be replaced by 
arguments based on 
relationships between points and lines in a Poncelet grid.
At the end of Section~\ref{sect:examples} we even present a movable $(110_6)$ configuration,
whose construction can be generalised to show the existence of moveble $(n_k)$ configurations for
infinitely many values of $n$, depending on $k$.

\subsection{Overview of the paper}

In Section~\ref{sect:prelim} we present some important background about the geometry of conics and their relations to
Poncelet polygons and Poncelet grids
\cite{IzTa17,Sch07,Ta05,Ta19}. 
Section~\ref{sect:fromPoncelet} will prove our main theorem in a constructive way. 
We first show how the main result can be reduced  from $(n_4)$ configurations with $k$ rings of points to the situation where we only have $3$ rings of points. Then we will give an explicit procedure that starts with a Poncelet polygon and from that produces an $(n_4)$ configuration (with three rings). 
The impatient reader may right away jump to Figures~\ref{fig:star1} and \ref{fig:star2} to see this construction. The construction will produce $n$ lines. All incidences required to be an $(n_4)$ configuration 
are then demonstrated. After that we present  alternative proving approach  as using in-circle nets and in the language of elliptical billiards and discrete integrable systems, in the case where the Poncelet conics are confocal ellipses. 

\section{Preliminaries on Conics} \label{sect:prelim}

In this section we collect a few important facts about geometric objects and relations that are used throughout this article. In particular we focus on conics and their relation to Poncelet's Porism. 
We also discuss the role of real vs. complex realisations in the context of our work.

\subsection{Real vs.\ complex}

A note on the matter of real vs.\ complex is appropriate here.
We will deal with elements of projective geometry, which are points, lines, conics and projective transformations. 
We will often draw pictures in the real projective plane $\mathbb{RP}^2$. However, often it is algebraically more appropriate to consider the complex projective plane 
$\mathbb{CP}^2$ as the ambient space.
Conics will usually be represented as quadratic forms given by a symmetric matrix. It may easily happen that a real quadratic form only has complex solutions (when the matrix is positive definite). Although this conic has no points in $\mathbb{RP}^2$, algebraically this is still a valid geometric object in $\mathbb{CP}^2$.

It may also happen that intersections between conics may become complex. We even may get mixed situations in which two intersections are real and the other two are complex.

The main goal of this paper will be to derive statements about {\it real} configurations. Nevertheless, many of the incidence-theoretic statements along the way may be of an entirely algebraic nature and are best and most generally formulated over the complex numbers. In what follows we will draw real pictures whenever possible. Still, the statements should be considered over the complex numbers. Whenever we will specifically deal with a {\it complex}  configuration we will explicitly mention this.

\subsection{Pencils and co-pencils}

In what follows, we will consider  dependent and co-dependent sets of conics.
\begin{definition}
Three conics given by matrices $\mathcal{A}$, $\mathcal{B}$, $\mathcal{C}$ are called {\em dependent} if the corresponding matrices are linearly dependent. They are called 
{\it co-dependent} if their inverses $\mathcal{A}^{-1}$, $\mathcal{B}^{-1}$, $\mathcal{C}^{-1}$ (if they exist) are linearly dependent.
\end{definition}

The set of all matrices that can be derived as linear combinations of $\mathcal{A}$ and $\mathcal{B}$ is called the {\it pencil} through $\mathcal{A}$ and $\mathcal{B}$. These are all matrices of the form 
$\lambda \mathcal{A}+\mu \mathcal{B}$. If $\mathcal{A}$ and $\mathcal{B}$ represent different conics, all conics in the pencil $\lambda \mathcal{A}+\mu \mathcal{B}$ have four points in common
(counted with multiplicity and including complex solutions). Dually, we may define the \emph{co-pencil} of $\mathcal{A}$ and $\mathcal{B}$ as the set of all co-dependent matrices. Counted with multiplicity 
and including complex cases, conics in a co-pencil  have four tangents in common.
\medskip

\begin{figure}[H]
\smallskip
\noindent
\includegraphics[width=0.45\textwidth]{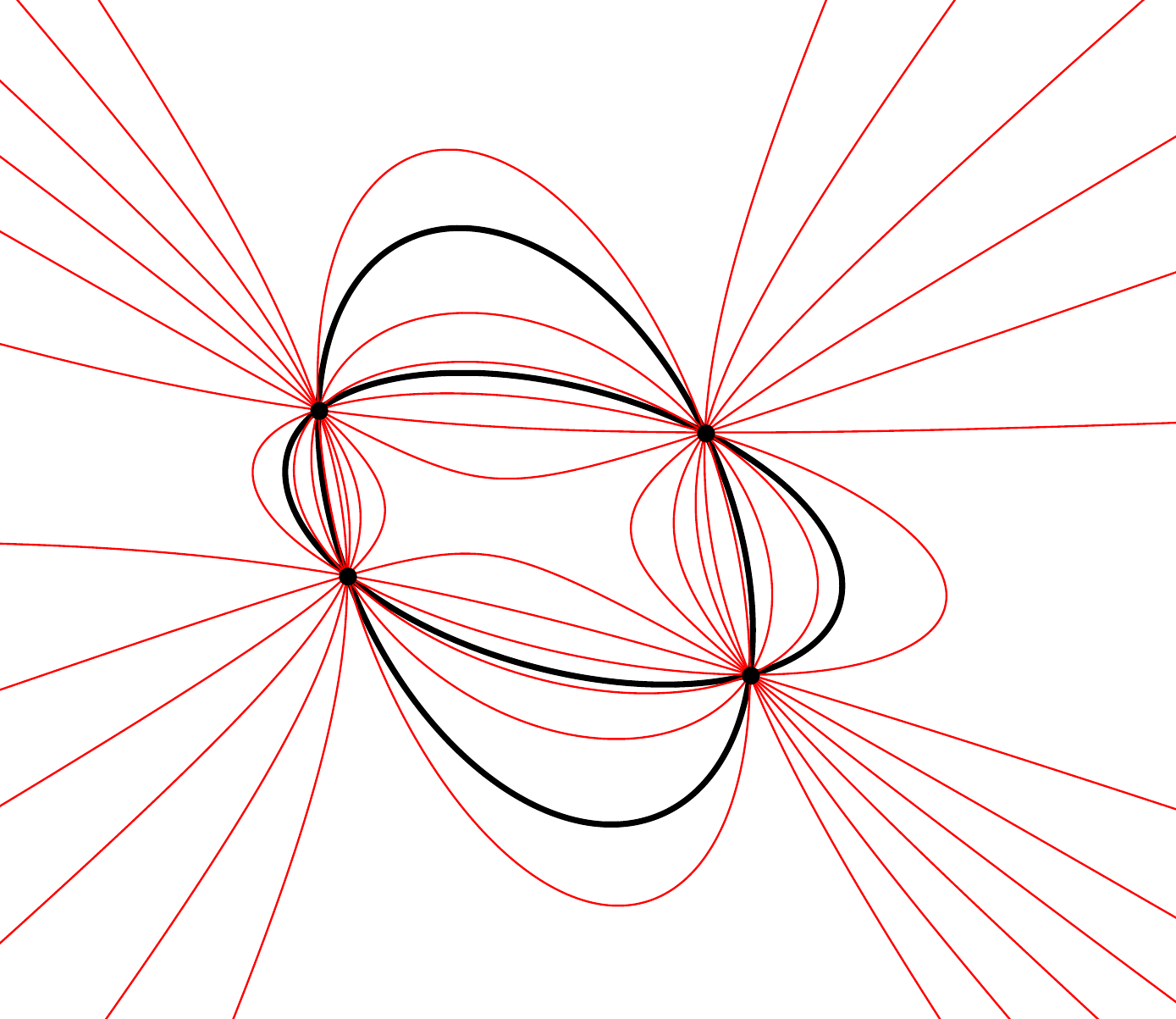}\hfill
\includegraphics[width=0.45\textwidth]{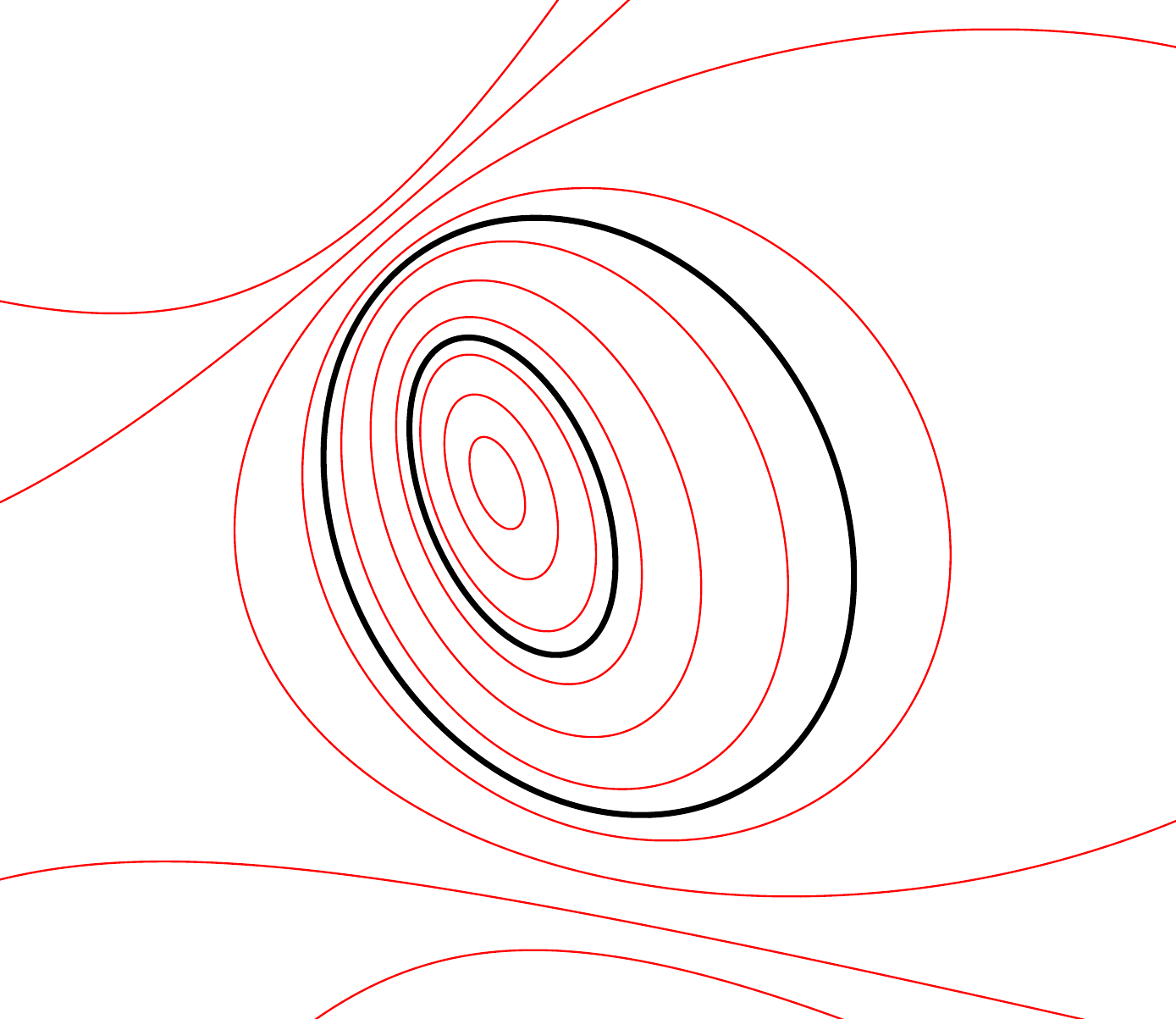}
\captionof{figure}{Some conics (red) from the pencil of dependent conics spanned by two other conics (black). 
                          On the left is the case of four real intersection points, on the right is the case of four complex intersection points.}
\end{figure}

\begin{figure}[h]
\noindent
\includegraphics[width=0.45\textwidth]{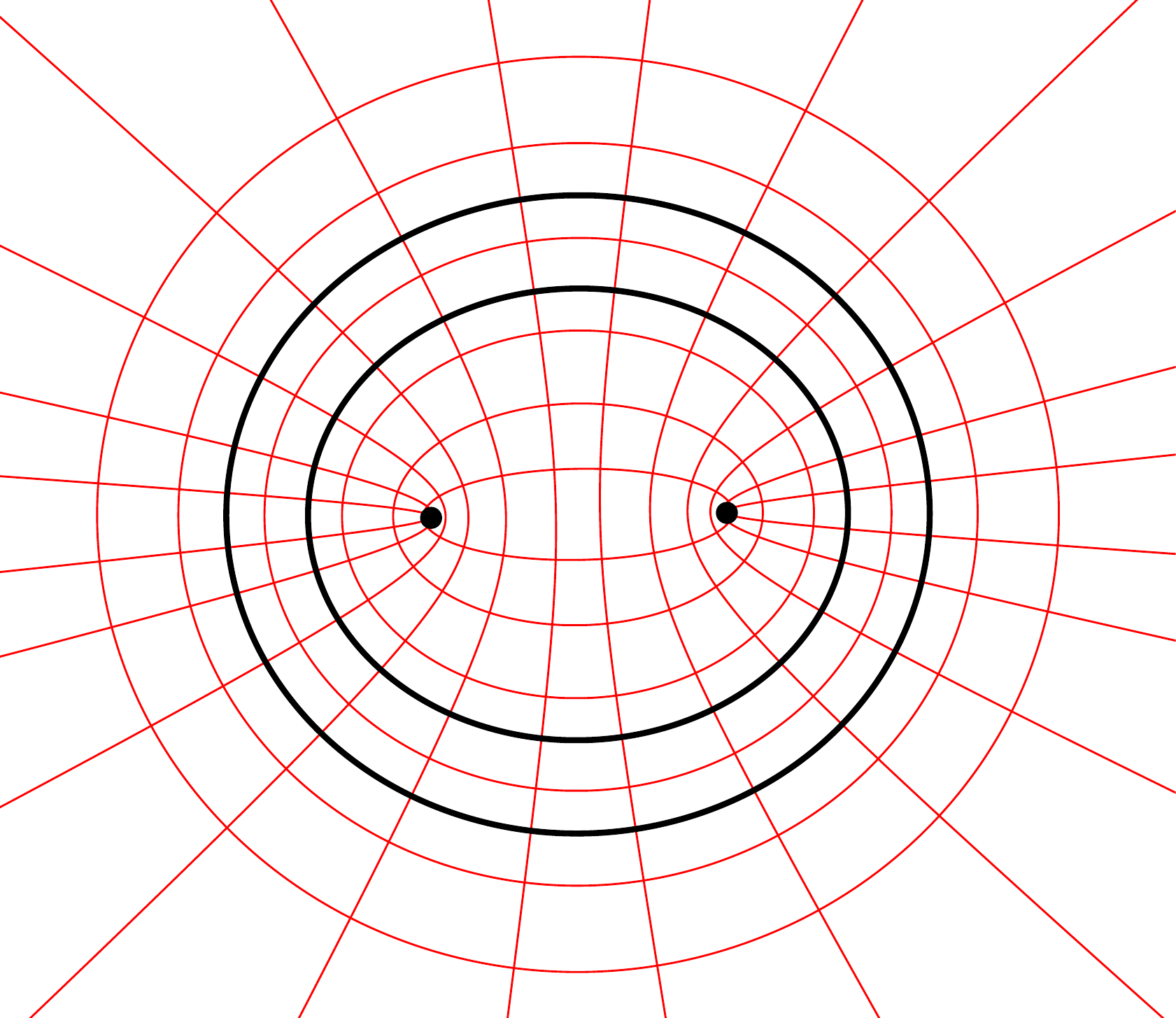}\hfill
\includegraphics[width=0.45\textwidth]{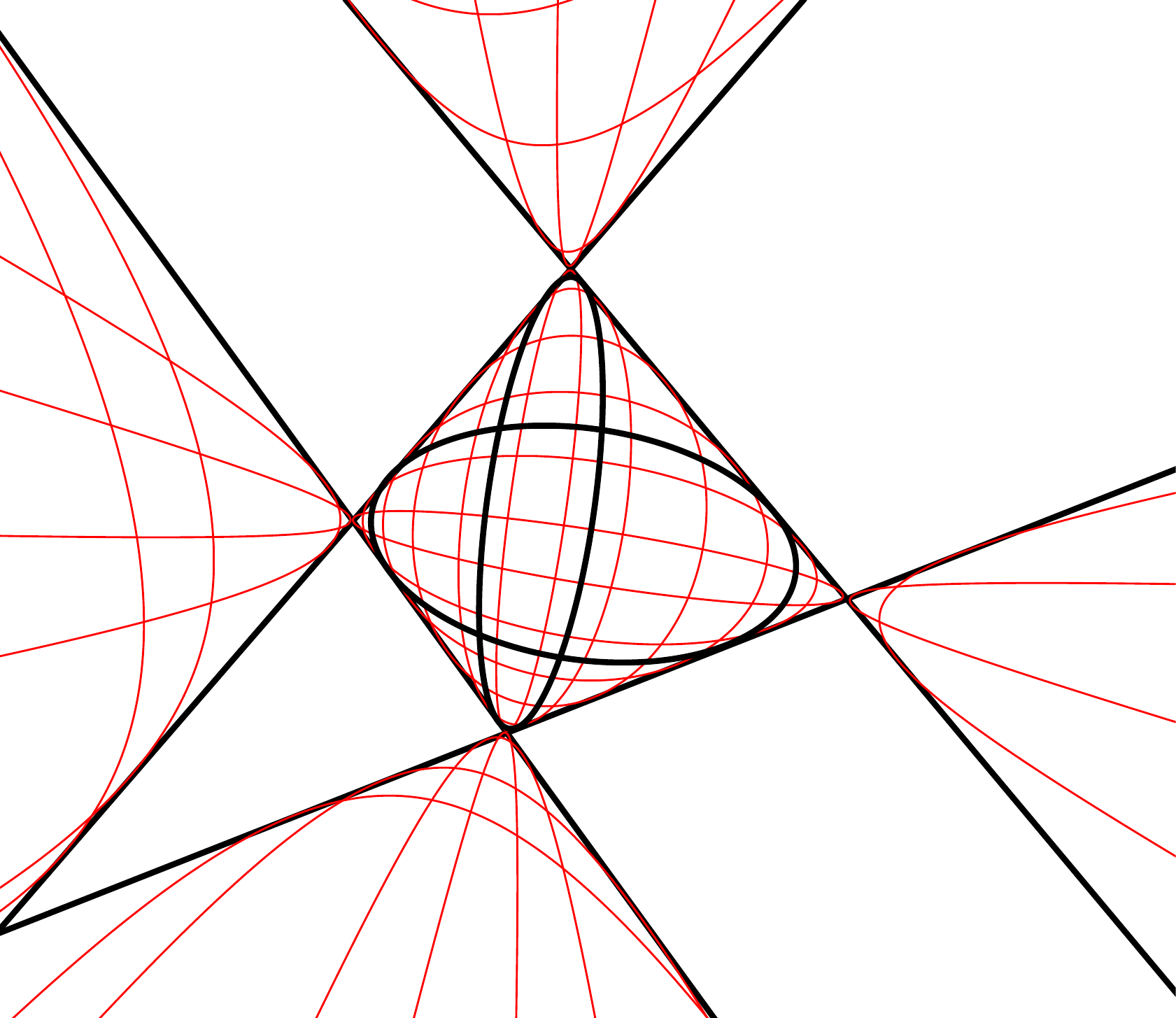}
\captionof{figure}{Some conics (red) from the co-pencil of co-dependent conics spanned by two other conics (black). 
                          On the right is the case of four real tangents, on the left is the case of four complex tangents. 
                          Up to projective transformation, this case corresponds to a set of confocal conics. }
\end{figure}

\bigskip

\subsection{Operations on rings of points and lines}

We fix some notation for operating  on polygons here.
Let $P=(p_1,p_2\upto p_m)$ be a sequence of points on a conic.
We consider indices modulo $m$ and define a sequence of lines
\[
\lns[i](P):=(p_1\vee p_{1+i}\upto p_m\vee p_{m+i}).
\]
where $p_{j} \vee p_{k}$ indicates the line passing through points $p_{j}$ and $p_{k}$.
Thus $\lns[i](P)$ is a sequence of $m$ lines that arise from connecting 
points of $P$ to points of $P$ by shifting the index  $i$ steps.
Since the points of $P$ were assumed to lie on a conic, we cover the limit situation
by defining $\lns[0](P)$ to be the tangents to the conics at the points $p_i$.

Dually we define a similar operation for a list $L=(l_1,l_2\upto l_m)$ of lines tangent to a conic: The notation
\[
\pts[i](L):=(l_1\wedge l_{1-i}\upto l_m\wedge l_{m-i})
\]
represents a sequence of points formed by the intersection of specific tangents; here, $l_{j} \wedge l_{k}$ represents the point of intersection of the lines $l_{j}$ and $l_{k}$. 
Notice that the index shift goes in exactly the opposite direction. Similarly to the point case we define  $\pts[0](L)$ to be the touching points of the lines $l_i$ to the conic.
By this convention we obtain that 
\[\pts[i](\lns[i](P))=P\]
so that 
\[\pts[i]\cdot \lns[i]={\mathrm{id}}.\]

\subsection{Poncelet grids}

An important ingredient in the proof of our main theorem is {\it Poncelet grids}. They unveil a  whole class of conics that underly a single Poncelet $m$-gon. These conics  exhibit characteristic dependencies and co-dependencies. The existence of these conics was already known to Darboux \cite{Dar17}. A nice geometric treatment  and proofs can be found in \cite{Ber87}. 
The results can further be sharpened by observing that for odd $m$ the points in the different Poncelet grid rings 
are related by projective transformations (see \cite{LaTa07,Sch07}). For even $m$ they fall into 2 different classes by parity and are projectively equivalent within the classes.

We will use these results about the conics and dependencies to ensure the existence of the related $(n_4)$ configurations.
In order to create real configurations, whenever we draw pictures we will restrict ourselves to Poncelet polygons supported by a set of nested ellipses. 

\medskip
For our main result we need to deal with Poncelet grids and their duals.
For this let $P=(p_1,p_2\upto p_m)$ be a Poncelet $m$-gon. Throughout this section we count indices modulo $m$. 
We assume that all vertices of $P$ are on a conic $\mathcal{C}$ and all lines $L=\lns[1](P)$ are tangent to a conic $\mathcal{X}$.

We now form intersections of these lines.
We set $P_i=\pts[i](L)$. In particular,  the limit case $P_0$ is the set of  touching points of $L$ to the inner conic.

\begin{theorem}\label{ponceletGrid1}
Let $P=(p_1\upto p_m)$ be a Poncelet polygon with points on a conic $\mathcal{C}_0$ and with lines $L=\lns[1](P)$ tangent to a conic $\mathcal{X}$. The points in each ring $P_i=\pts[i](L)$ 
all lie on a conic $\mathcal{C}_i$.
All conics $(\mathcal{C}_0\upto \mathcal{C}_m,\mathcal{X})$ are co-dependent.
\end{theorem}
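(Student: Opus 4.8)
The plan is to exploit the projective classification of Poncelet grid rings together with the duality operators $\pts[i]$ and $\lns[i]$ introduced above, reducing the co-dependence statement to a single, well-understood base case. First I would observe that the line set $L=\lns[1](P)$ is, by hypothesis, a Poncelet polygon circumscribed about $\mathcal{X}$ and inscribed in the dual conic of $\mathcal{C}_0$ (the tangent lines to a conic through points on $\mathcal{C}_0$ form a Poncelet polygon in the dual picture). Dualising the whole configuration turns the claim ``the conics $\mathcal{C}_i$ are co-dependent'' into the statement ``the dual conics $\mathcal{C}_i^{*}$ are dependent'', i.e. they lie in a pencil. So it suffices to prove the following dual form: given a Poncelet polygon with vertices on a conic and edges tangent to a conic, the conics on which the successive edge-intersection rings $\pts[i]$ lie form a pencil. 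This is precisely the classical Poncelet grid theorem of Darboux, for which proofs are available in \cite{Ber87,Sch07}; the content I actually need to add is (a) that the rings here include the limiting ring $P_0$ of touching points (handled by the $\pts[0]$ convention and a continuity/limit argument, or directly since $\mathcal{X}$ itself is the conic carrying $P_0$) and (b) that the whole family, including $\mathcal{X}$, is co-dependent, not merely the $\mathcal{C}_i$ among themselves.

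The key steps, in order, would be: (1) Set up the dual picture, replacing each conic by its matrix inverse, so that co-dependence becomes linear dependence and $\pts[i]$, $\lns[i]$ swap roles; record that $L$ is a genuine Poncelet polygon in this dual plane. (2) Invoke the Poncelet grid theorem to conclude that each ring $\pts[i](L)$ lies on a conic $\mathcal{C}_i$, and that all these conics lie in a single pencil --- equivalently, pick three of them and show the matrices are linearly dependent, which is exactly what \cite{Ber87} establishes via the elliptic-curve parametrisation of the Poncelet dynamics (the index shift $i\mapsto i+1$ is translation on the elliptic curve, and the conic through the $i$-th ring depends affinely on the translation parameter). (3) Handle the two boundary rings: $P_0=\pts[0](L)$ is the touching-point set, which lies on $\mathcal{X}$ by definition, so $\mathcal{C}_0^{\,\mathrm{dual}}$ in my reindexing is $\mathcal{X}^{*}$ --- wait, more carefully, I should check the indexing so that $\mathcal{X}$ appears as one member of the pencil; the cleanest route is to note $\pts[0](L)\subset\mathcal{X}$ and $\pts[1](L)=P$ which lies on $\mathcal{C}_0$, so $\mathcal{X}$ and $\mathcal{C}_0$ are two consecutive members and the pencil generated by any two of them contains all $\mathcal{C}_i$. (4) Translate back: taking inverses again, $(\mathcal{C}_0,\ldots,\mathcal{C}_m,\mathcal{X})$ are co-dependent. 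I would also remark on the parity subtlety: for even $m$ the ring $P_{m/2}$ may behave specially (pairs of tangents meeting on a degenerate member of the co-pencil), but degenerate conics still lie in the co-pencil, so the statement is unaffected; I would state this explicitly to preempt the objection.

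The main obstacle I anticipate is making the ``$\mathcal{X}$ is itself one of the co-dependent conics'' part clean rather than hand-wavy. The Poncelet grid theorem as usually stated talks about the intersection-point rings and says they lie in a pencil (dually, co-pencil), but it is a genuine extra claim that the \emph{circumscribed} conic $\mathcal{X}$ is also a member of that family and not just an ``external'' object. The honest way to see this is through the elliptic parametrisation: if $t$ is the parameter on the Poncelet elliptic curve and the ring $P_i$ corresponds to fixing $t - t' = i\delta$ while $t+t'$ varies, then the conic $\mathcal{C}_i$ is cut out by a section of a line bundle that depends \emph{linearly} on $\cos(i\delta)$ (or an analogous affine function of the shift), and the limiting ``shift $=0$'' case is exactly the conic $\mathcal{X}$ traced by the tangent lines — so $\mathcal{X}$ sits at one end of the same affine family. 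Pinning down that this dependence is genuinely affine/linear in the right coordinate (so that three consecutive conics are always linearly dependent) is the crux; everything else is bookkeeping with the $\pts[i]\cdot\lns[i]=\mathrm{id}$ identity and matrix inversion. An alternative, more synthetic obstacle-avoiding route is to cite \cite{LaTa07,Sch07} for the statement that the rings are projectively related by a transformation fixing the relevant conics, and deduce co-dependence from that; I would present the elliptic-curve argument as the primary proof and mention the synthetic one as an alternative.
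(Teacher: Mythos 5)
The paper does not actually prove this theorem: it is quoted as a classical fact, with the existence of the conics attributed to Darboux and proofs referred to \cite{Ber87}, sharpened by \cite{LaTa07,Sch07}. Your proposal ultimately rests on exactly the same citations and on the right mechanism (the conics of the grid belong, up to a projective transformation, to a confocal family $\mathrm{diag}(a^2+\lambda,\,b^2+\lambda,\,-1)$ whose \emph{inverse} matrices depend affinely on $\lambda$, so any three members are co-dependent, and $\mathcal{X}$ and $\mathcal{C}_0$ are themselves members of that family). In that sense you are doing what the paper does, and your attention to the limiting ring $P_0$, to the membership of $\mathcal{X}$, and to the parity issue at $i=m/2$ is more careful than the paper's own two-line discussion.

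However, your dualisation step is both unnecessary and, as written, wrong. You correctly note that co-dependence of the $\mathcal{C}_i$ is equivalent to linear dependence of the $\mathcal{C}_i^{-1}$, but the geometric dual of the statement ``the intersection rings $\pts[i](L)$ lie on co-dependent conics'' is ``the line rings $\lns[i]$ are tangent to \emph{dependent} conics'' -- i.e.\ Theorem~\ref{thm:DualPonceletPolygon} -- not the statement you write down, namely that ``the conics on which the successive edge-intersection rings $\pts[i]$ lie form a pencil.'' That latter claim is false: the conics through the concentric rings share four common \emph{tangents}, not four common points (for confocal ellipses they are confocal, and distinct confocal conics never meet in four points). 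The same slip recurs in your step (2) and step (3), where ``pencil'' should be ``co-pencil'' throughout. None of this is fatal, because the classical grid theorem of Darboux/Schwartz/Levi--Tabachnikov already asserts the co-pencil (confocal) property for the point rings directly, so the detour through duality can simply be deleted; but if you keep the dual formulation you must dualise the operators as well as the conics, and you land on the dual grid theorem rather than on a restatement of the primal one. Finally, the ``main obstacle'' you flag -- that $\mathcal{X}$ is a genuine member of the family rather than an external object -- dissolves in the confocal normal form: $\mathcal{X}$ and $\mathcal{C}_0$ are the two confocal conics you started with, hence automatically members of the one-parameter family containing all the $\mathcal{C}_i$.
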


\begin{figure}[h]
\begin{center}
\includegraphics[width=0.75\textwidth]{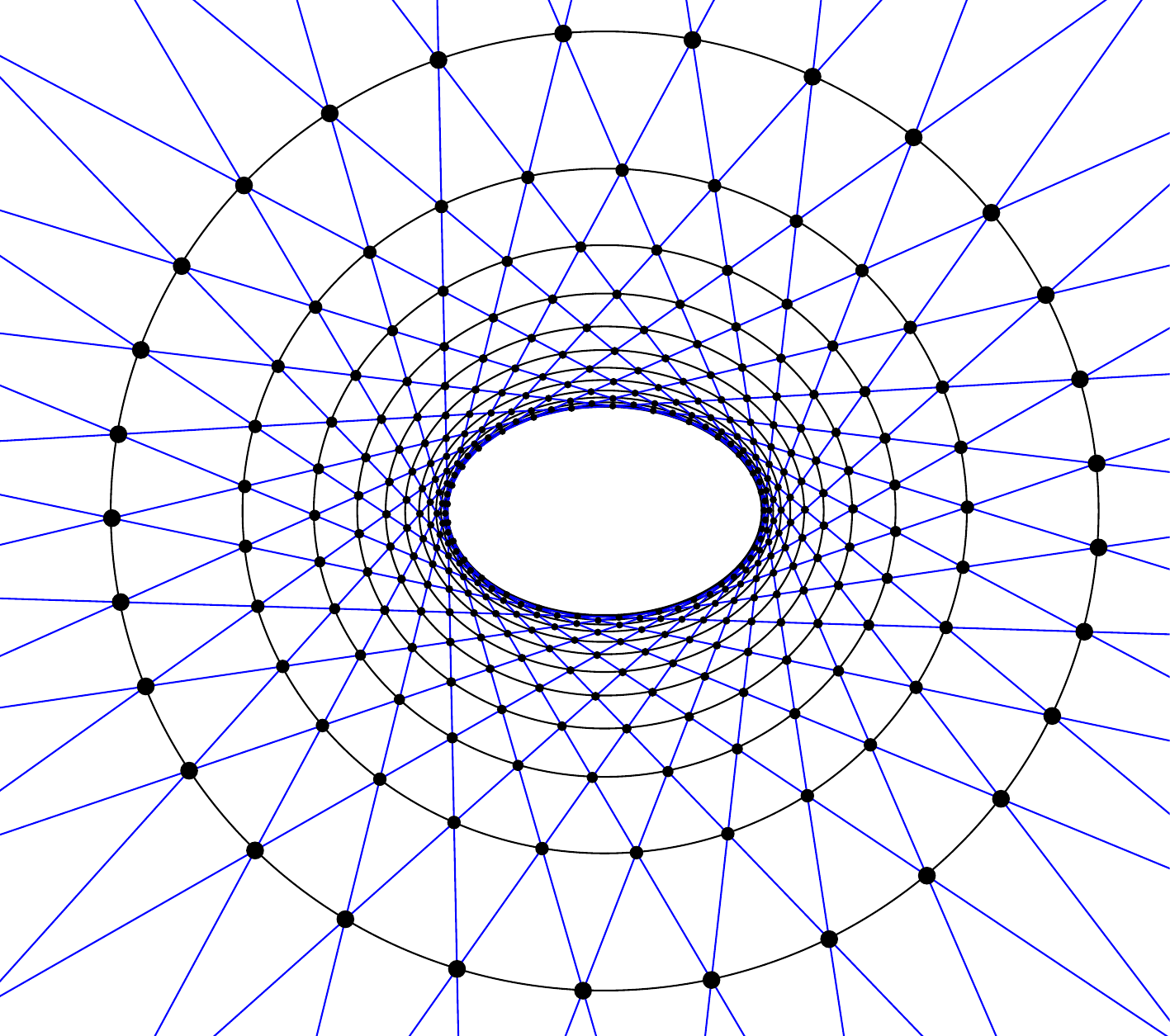}
\captionof{figure}{Different rings of points and conics in a Poncelet grid} \label{fig:grid1}
\end{center}
\end{figure}

We call the set of points $P_i$, lines $L$, and conics $(\mathcal{C}_0\upto \mathcal{C}_m,\mathcal{X})$ a 
{\it Poncelet grid}. Figure \ref{fig:grid1} shows a Poncelet $29$-gon and the corresponding Poncelet grid conics. The Poncelet polygon consists of the tangents to the innermost conic $\mathcal{B}$. From inside out one sees the point rings $\pts[0](L),\pts[1](L),\pts[2](L),\ldots$. 
Each ring of points lies on a conic $\mathcal{C}_i$, and may be considered as a Poncelet (star-)polygon supported by $\mathcal{C}_i$ and $\mathcal{X}$. If $\gcd(i,m)>1$,
the points in a ring $\pts[i](L)$ decompose into several smaller Poncelet polygons. When the index $i$ is above $m/2$, the conics start to repeat. We get $\mathcal{C}_{j} = \mathcal{C}_{i}$ where $j = i - m/2$. We exclude index $m/2$ when $m$ is even.

There is an obvious dual version of the above theorem.
For this we consider different 
rings of lines $L_i=\lns[i](P)$ of a Poncelet $m$-gon. We set $L_0=L$. As a dual statement we get:

\begin{theorem} \label{thm:DualPonceletPolygon}
Let $P$ be a Poncelet $m$-gon with points on conic $\mathcal{C}$ and lines $L=\lns[1](P)$ tangent to $\mathcal{X}=\mathcal{X}_0$. 
Each ring of lines $L_i=\lns[i](P)$ is tangent to a single conic $\mathcal{X}_i$.
All conics $(\mathcal{C},\mathcal{X}_0\upto \mathcal{X}_n)$ are dependent.
\end{theorem}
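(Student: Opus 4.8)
This is the projective dual of Theorem~\ref{ponceletGrid1}, and the plan is to deduce it by an appeal to the duality principle of plane projective geometry rather than to argue from scratch. First I would fix a duality $\mathbb{P}^2 \leftrightarrow (\mathbb{P}^2)^{*}$ -- concretely the polarity with respect to an arbitrary nondegenerate conic (which exists and is real even when the conic itself is imaginary), or simply the canonical identification of the lines of $\mathbb{P}^2$ with the points of the dual plane. This correlation interchanges points and lines, preserves all incidences, and sends the conic with symmetric matrix $M$ to the conic with matrix $M^{-1}$ (up to scale). In particular it carries a \emph{dependent} triple of conics to a \emph{co-dependent} triple and vice versa, which is precisely the dictionary entry needed at the end.

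Next I would push the hypotheses and the conclusion of Theorem~\ref{ponceletGrid1} through this dictionary. Under the dual map the Poncelet $m$-gon $P=(p_1\upto p_m)$ with vertices on $\mathcal{C}$ and edges $L=\lns[1](P)$ tangent to $\mathcal{X}$ becomes, in the dual plane, a Poncelet $m$-gon $\widehat P$ again: the images of the vertices are $m$ lines tangent to $\mathcal{C}^{-1}$, consecutive ones meet in points lying on $\mathcal{X}^{-1}$, and, taking those meeting points as the vertices of $\widehat P$, the ring $\lns[1](\widehat P)$ is precisely that family of lines. So duality turns a Poncelet polygon into a Poncelet polygon, merely interchanging the roles ``vertices on'' and ``edges tangent to''. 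The operation $\lns[i]$ dualizes to $\pts[i]$ -- this is exactly why the two operations were introduced with opposite index-shift conventions, so that $\pts[i]\cdot\lns[i]=\mathrm{id}$ -- and the only ambiguity (whether one lands on $\pts[i]$ or on $\pts[-i]$) is harmless, because $\lns[i](P)$ and $\lns[m-i](P)$ are literally the same ring of lines. Applying Theorem~\ref{ponceletGrid1} to $\widehat P$ then says that each ring of points $\pts[i](\lns[1](\widehat P))$ lies on a conic and that all of these conics together with $\mathcal{C}^{-1}$ are co-dependent. Dualizing this conclusion back: each ring of lines $L_i=\lns[i](P)$ is tangent to a single conic $\mathcal{X}_i$, and since co-dependence of a family is the same as dependence of the family of inverses and $\mathcal{C}^{-1}$ dualizes to $\mathcal{C}$, the conics $(\mathcal{C},\mathcal{X}_0\upto\mathcal{X}_m)$ are dependent -- which is the assertion.

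I do not expect a genuine obstacle here; the statement is ``obvious'' in exactly the sense that the duality principle makes it so, and the real work is bookkeeping that one must not get wrong. The points that need care are: (i) the index correspondence under duality, in particular the identification $\lns[i](P)=\lns[m-i](P)$, which also explains why $\mathcal{X}_i=\mathcal{X}_{m-i}$ and why the family of conics repeats with period $m/2$, just as the $\mathcal{C}_i$ do in Theorem~\ref{ponceletGrid1} (so that the ``$\mathcal{X}_n$'' in the statement should presumably read ``$\mathcal{X}_m$'', and the base cases $\mathcal{X}=\mathcal{X}_0$, $L_0=L$ must be lined up with $\mathcal{C}_0$, $P_0$ on the primal side in the same slightly informal way); (ii) confirming that ``co-dependent'' really is the precise dual of ``dependent'', which is immediate from the given definition since the matrix of the dual conic is the inverse matrix; and (iii) the real-vs-complex point, which causes no trouble because the correlation used is real (alternatively one simply works in $(\mathbb{P}^2)^{*}$). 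If one prefers to avoid the duality formalism altogether, an equally valid route is to repeat the proof of Theorem~\ref{ponceletGrid1} verbatim with ``point'' and ``line'', ``lies on'' and ``passes through'', and the operators $\lns[i]$ and $\pts[i]$ interchanged; but invoking duality is shorter and makes transparent why the two statements are one and the same theorem.
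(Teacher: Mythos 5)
Your proposal is correct and takes essentially the same approach as the paper: the paper offers no separate argument for this theorem, introducing it only as ``the obvious dual version'' of Theorem~\ref{ponceletGrid1}, which is precisely the duality argument you spell out (including the dependent/co-dependent dictionary via $M \mapsto M^{-1}$ and the harmless $\vee_i$ versus $\vee_{m-i}$ index ambiguity). Your write-up simply makes explicit the bookkeeping that the paper leaves implicit, and your observation that the ``$\mathcal{X}_n$'' in the statement should read ``$\mathcal{X}_m$'' is a correct catch of a typo.
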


Figure \ref{fig:grid29} shows some of the rings of the dual Poncelet grid of a Poncelet 29-gon.

\begin{figure}
\includegraphics[width=0.3\textwidth]{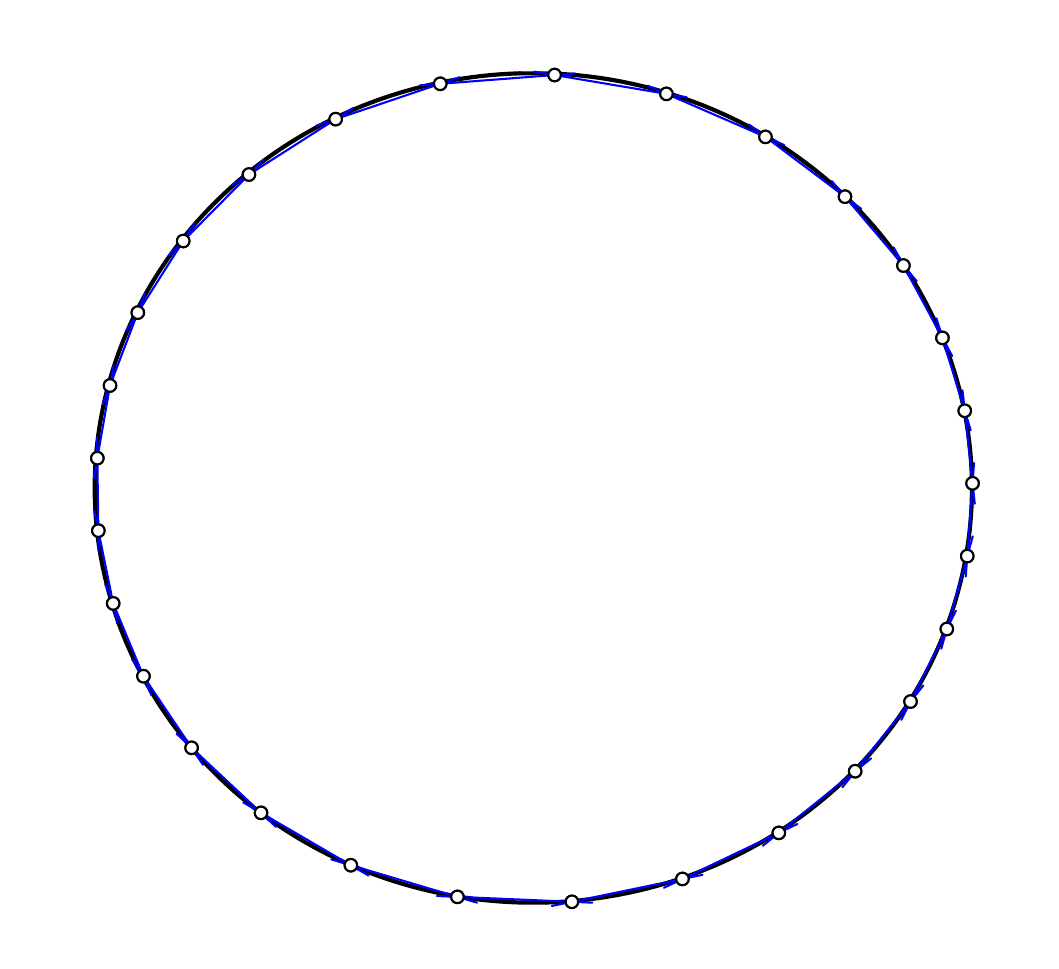}\hfill
\includegraphics[width=0.3\textwidth]{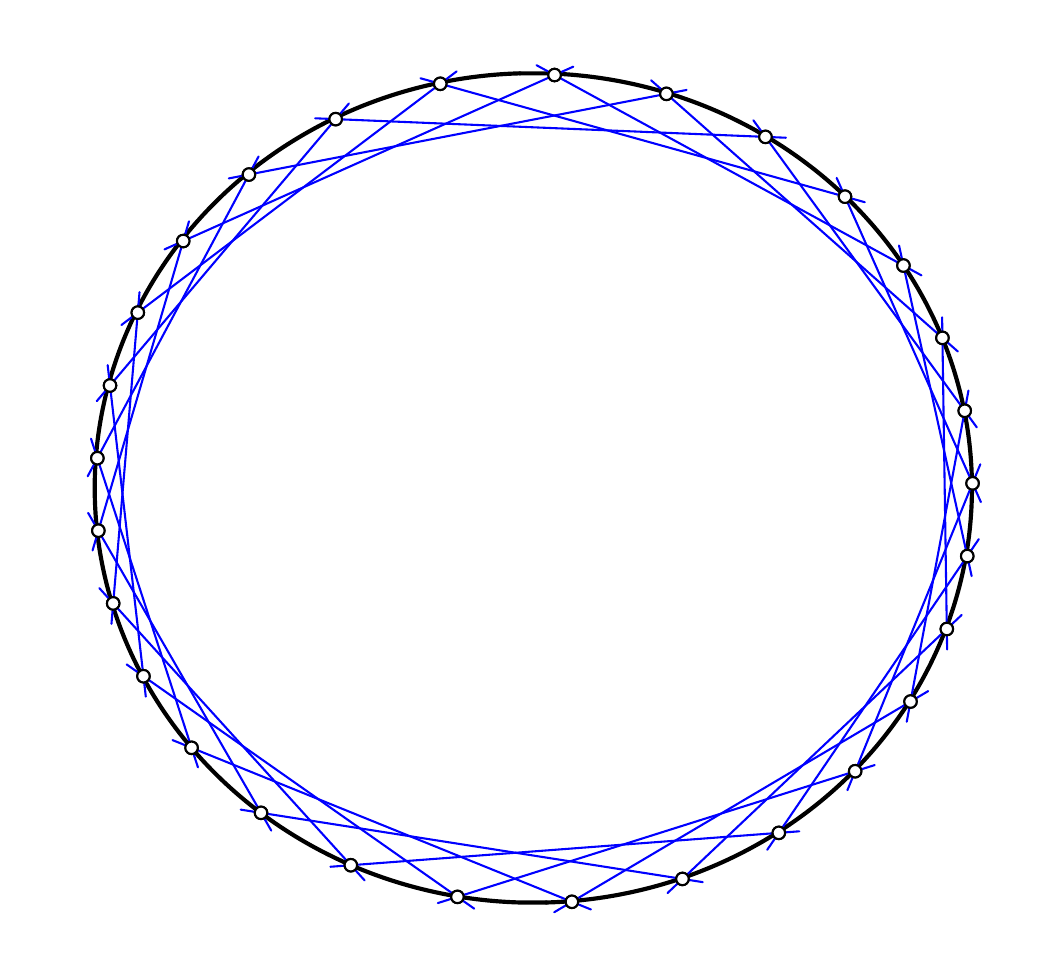}\hfill
\includegraphics[width=0.3\textwidth]{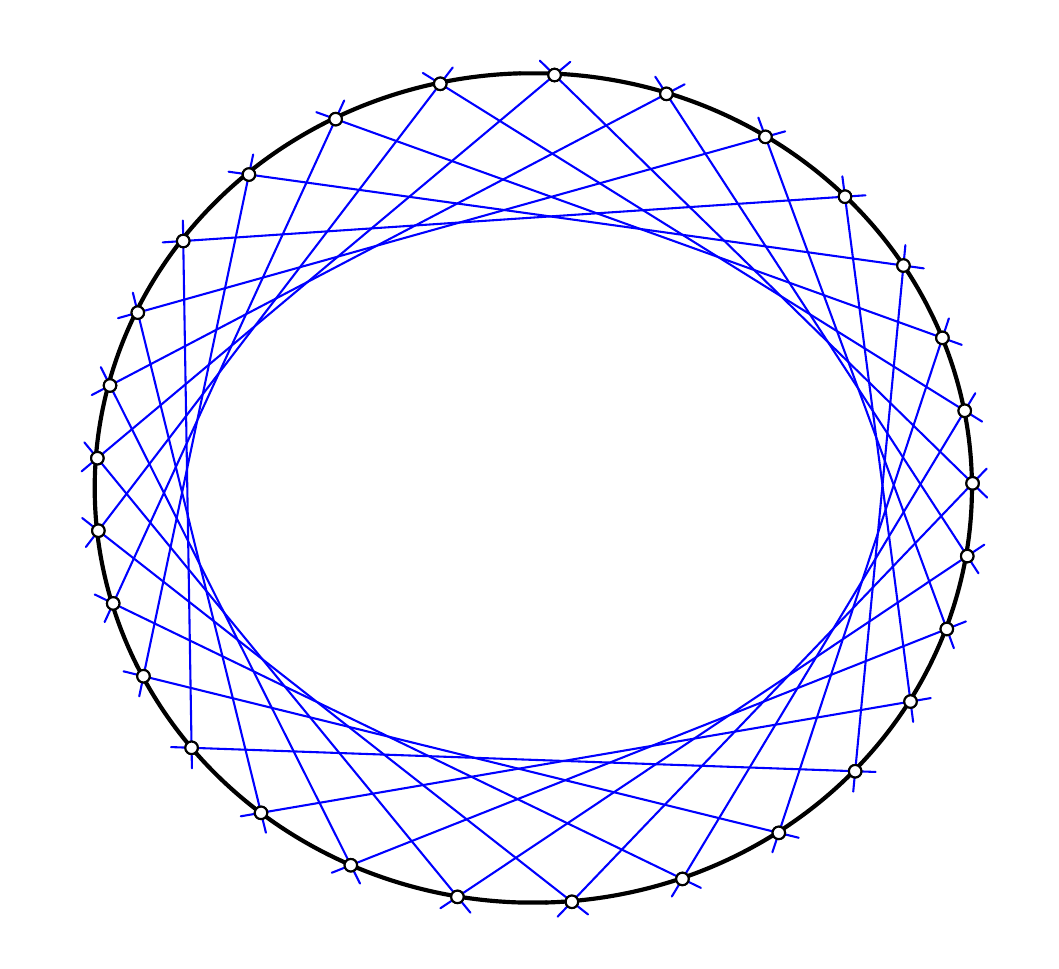}

\noindent
\includegraphics[width=0.3\textwidth]{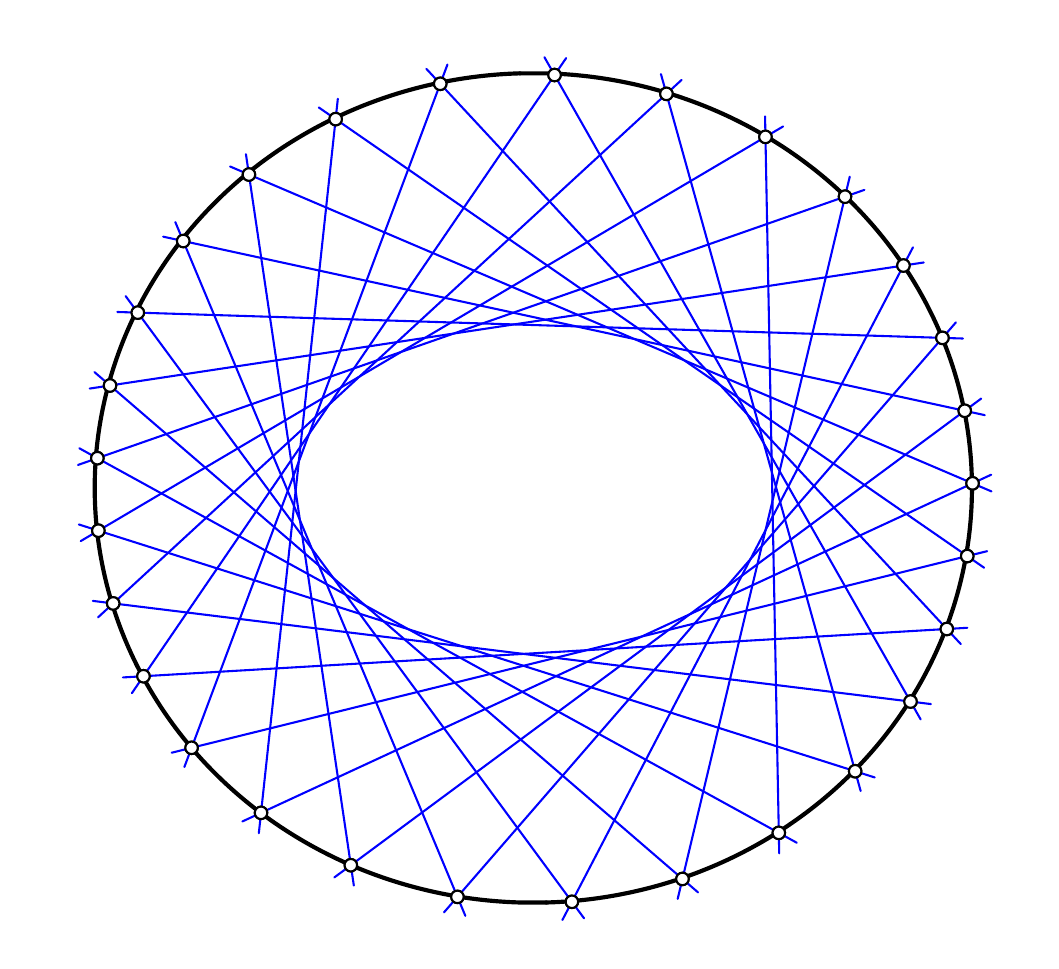}\hfill
\includegraphics[width=0.3\textwidth]{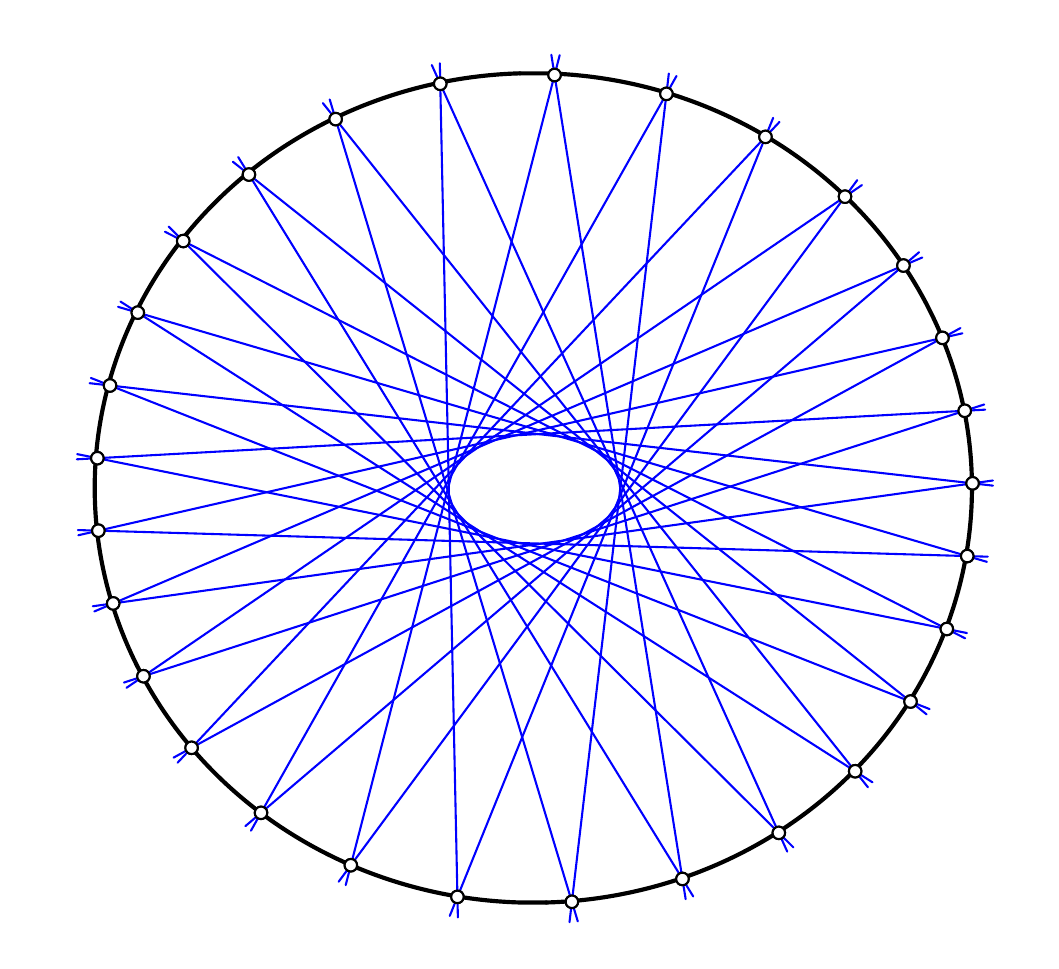}\hfill
\includegraphics[width=0.3\textwidth]{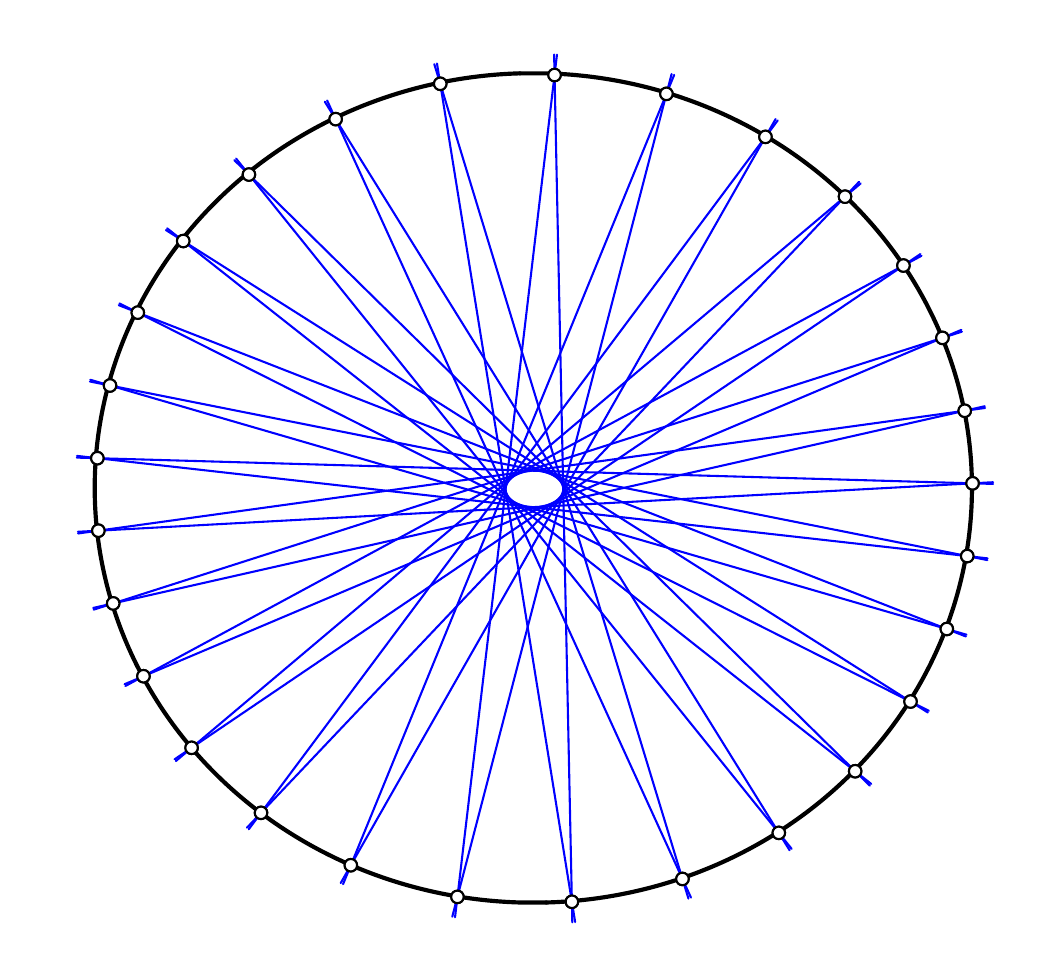}

\captionof{figure}{Rings of lines arising from dual Poncelet grids of a Poncelet 29-gon.}\label{fig:grid29}
\end{figure}

\section{From Poncelet to $(n_4)$ configurations}
 \label{sect:fromPoncelet}

\subsection{Movable $(n_4)$ configurations}

We now show how one can construct trivial celestial $(n_4)$ configurations from 
a given Poncelet polygon and its related Poncelet grids. 
We  describe different constructions that emphasise various structural aspects of the configuration.
We first describe a  construction that is close to the original setup of Gr\"{u}nbaum.
Starting from a Poncelet polygon we construct additional points and lines that end up forming an $(n_4)$ configuration. We will prove the correctness of the construction by a direct calculation in a self-contained way.
After that we will relate the  core incidence statement of the previous argument to a local
incircle argument that arises after a special coordinate transformation.  Then, we  describe related ways to obtain weaker versions of our main result in the situation where the Poncelet conics are confocal ellipses,  including
one that relates the construction to  {\it incircle nets} \cite{AkBo18} locating the precise position of possible points in the configuration. We will also present another approach  giving an argument based on local perturbed coordinate systems that arise in the theory of the {\it geometry of billiards} \cite{Ta05}. Furthermore, a significantly simpler proof will be given in the special case where we start with a Poncelet polygon with an {\it odd} number of points.
\medskip

Let $m\geq 7$ and $P=(p_1\upto p_m)$ be a Poncelet polygon. 
Beginning with a $P$ we construct new points and lines from the points of $P$. We adopt the concept of {\it celestial configurations} of Gr\"{u}nbaum and Berman to describe a construction 
process that leads to $(n_4)$ configurations. In Gr\"{u}nbaum's book on configurations this notion was exclusively applied to point sets $P$ that are the vertices of regular 
$m$-gons \cite{Ber01,Ber06,Ber08,Gr00a,GR00b,Gr09}. The fact that the same procedure can (in specific cases) also be applied to Poncelet polygons instead of regular 
$m$-gons, replacing trigonometric constraints by projective and algebraic ones, is one of the main results of this article.

For $m\geq 7$ consider the formal symbol $m\#(a_1,b_1;a_2,b_2;\ldots;a_k,b_k)$ where each of the letters $a_i$ and  $b_i$ is a positive integer less than $m/2$.
To this symbol and an initial point set $P=(p_1\upto p_m)$ we assign additional points and lines by the following construction process.

\begin{construction}\label{con:mainconstr}
Given symbol $m\#(a_1,b_1;a_2,b_2;\ldots;a_k,b_k)$
we construct the following sequences of point $P_i$ and lines $L_i$:   
\[
\begin{array}{lllll}
P_0=P,\\   L_1=\lns[{a_1}](P_0),&P_1=\pts[{b_1}](L_1),&\\L_2=\lns[{a_2}](P_1),&P_2=\pts[{b_2}](L_2),\\ 
\ldots,\\
L_k=\lns[{a_k}](P_{k-1}),&P_k=\pts[{b_k}](L_k).
\end{array}
\]
\end{construction}
The situation is illustrated in Figure~\ref{fig:celest1}. The red points are the points of a Poncelet octagon (the construction above does not explicitly require this, but this is our main use-case). 
The initial points are assumed to be indexed counterclockwise in the order they appear on the conic. The left picture illustrates the construction of $8\#(3,1)$. The number 3 indicates that we connect $p_i$ and $p_{i+3}$ to get the black lines $\lns[3](P)$. The lines $i$ and $i-1$ are emphasized in the picture. The number 1 that follows the 3 in 
the sequence indicates that we 
intersect lines $i$ and $i-1$ for $i\in{1\upto m}$ to get $\pts[1](\lns[3](P))$: the blue points. The blue point with index $i$ is marked in the picture. 

\begin{figure}[h]
\begin{center}
\includegraphics[width=0.47\textwidth]{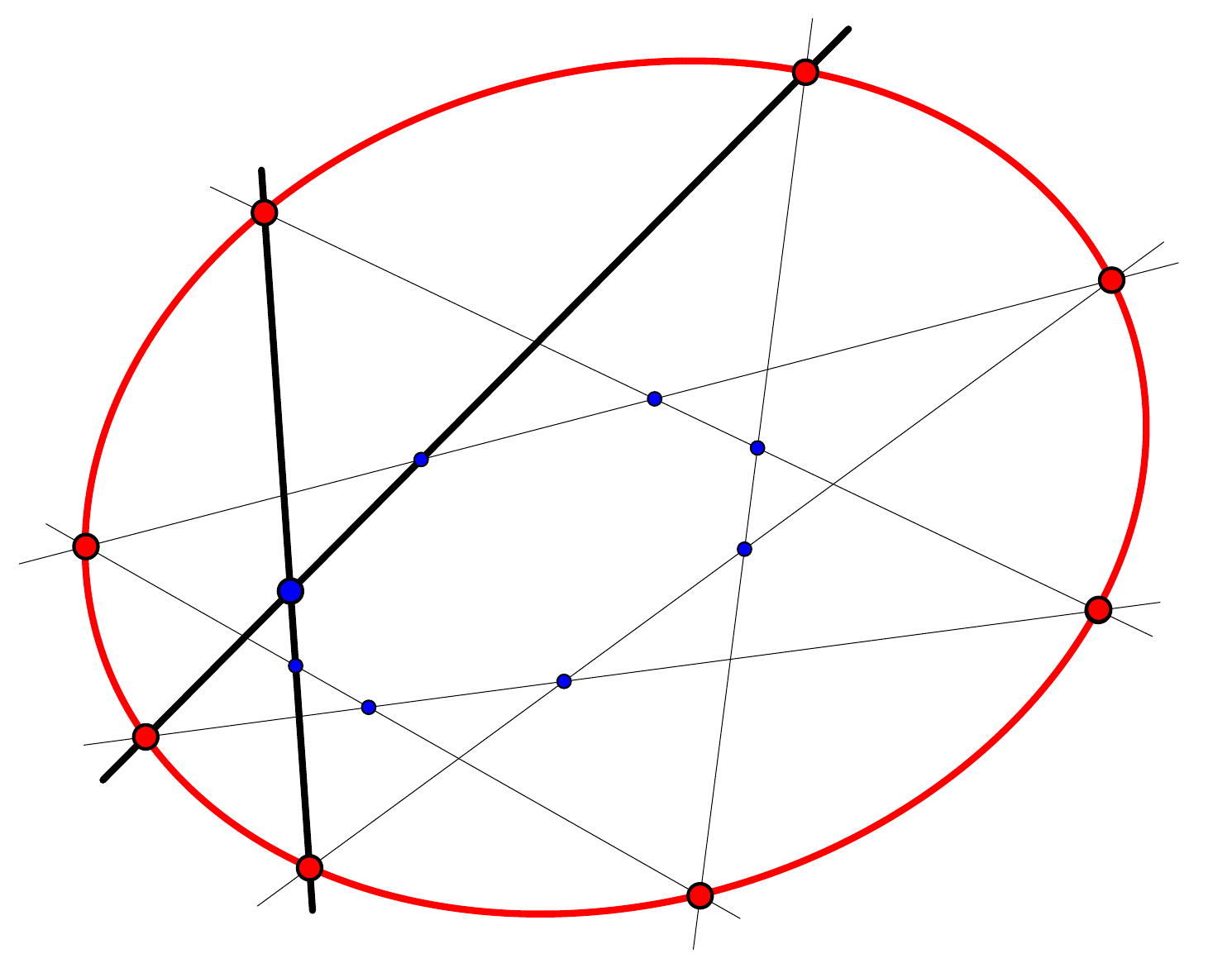}\hfill
\includegraphics[width=0.47\textwidth]{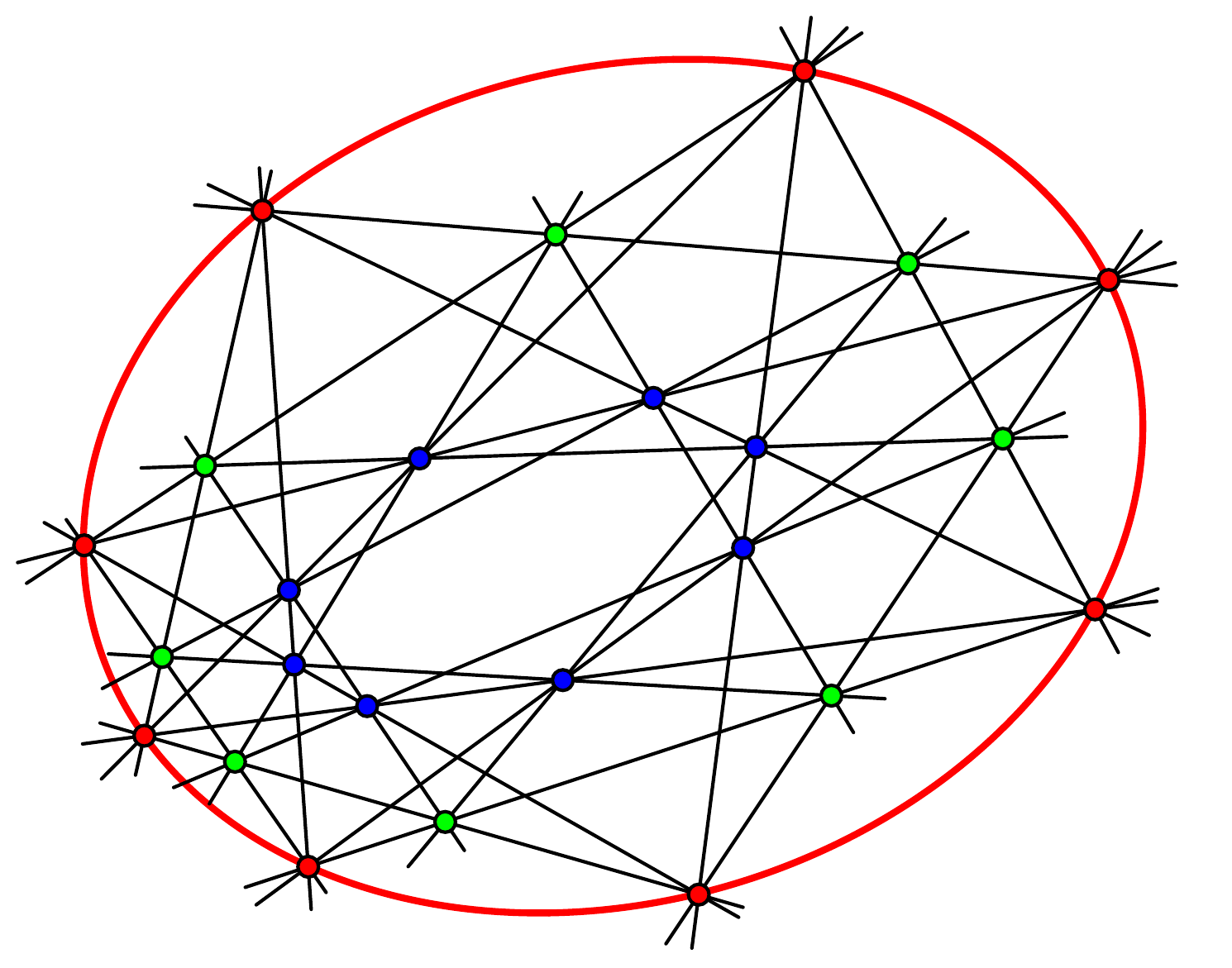}
\begin{picture}(0,0)
\put(-319,95){\footnotesize {\color{red}$i$}}
\put(-320,0){\footnotesize {\color{red}$i+3$}}
\put(-305,75){\footnotesize {\color{black}$i$}}
\put(-272,105){\footnotesize {\color{black}$i-1$}}
\put(-312,50){\footnotesize {\color{blue}$i$}}
\put(-136,105){\footnotesize {\color{red}$1$}}
\put(-135,49){\footnotesize {\color{blue}$1$}}
\put(-142,71){\footnotesize {\color{OliveGreen}$1$}}
\end{picture}
\captionof{figure}{Construction of $8\#(3,1)$ and of  $8\#(3,1;2,3;1,2)$ beginning with an initial set of (red) points $P$.}\label{fig:celest1}
\end{center}
\end{figure}

In the right part of Figure~\ref{fig:celest1} the construction is taken further and all elements of $8\#(3,1;2,3;1,2)$ are shown. We marked an initial red point with a label 1 (this is $(P_0)_1$). 
The blue and green points ($(P_1)_1$ and  $(P_2)_1$, respectively) that get a label 1 by the construction are marked as well. 

One might wonder, where the last ring of points $P_3$ in the picture is. As a matter of fact, it coincides 
with the initial set of red points $P_0$. 
This is no accident. 
The fact that constructions like this may close up under certain conditions is the main theorem of this article.
In this specific case we get 
\[\pts[2]\cdot\lns[1]\cdot\pts[3]\cdot\lns[2]\cdot\pts[1]\cdot\lns[3](P)=P.\]
After applying a sequence of six operations, each point of $P$ gets mapped back to its original position. 
 As a result, in the situation of Figure~\ref{fig:celest1} 
this construction leads to a $(24_4)$ configuration: every point lies on 4 lines and every line passes through four points. Before we come to this theorem, we specify the exact relation to $(n_4)$ configurations. 
To 
ensure that the sequential sets of points  and lines $P_i$, $P_{i+1}$ (resp.,  $L_i$, $L_{i+1}$) are distinct
we require that no adjacent letters (taken cyclically) in 
$m\#(a_1,b_1;a_2,b_2;\ldots;a_k,b_k)$ 
are identical. Except for the first and the final ring of points, each point is by construction involved  in 4 lines––two from which it is constructed and two others used to construct the next ring of lines. 
For the same reason every line is incident to four points. If, in addition, the configuration closes up and the initial ring of points
coincides with the last ring (as sets), these points are incident with four lines.
Thus we obtain a configuration where on each line there are (at least) four points and  through each point there are (at least) four lines.
Additional incidences might happen. For that reason we call such a configuration a {\it pre-$(n_4)$} configuration. 
Gr\"{u}nbaum's book characterises those symbols where we get proper $(n_4)$ configurations that do not have additional coincidences (see \cite[\S 3.5--3.8]{Gr09}). {In particular, symbols of the form $m\#(a_1,b_1;a_2,b_2;\ldots;a_k,b_k)$ where adjacent entries are distinct and the multisets $[a_{1}, \ldots, a_{k}]$ and $[b_{1}, \ldots, b_{k}]$ are equal (as multisets) satisfy the required properties for not having additional unintended incidences. 

The important fact for us is the following:
If we have an instance of Construction \ref{con:mainconstr} that closes 
up and which satisfies Grünbaum's properties or the parameters of the symbol, then we have an $(n_4)$ configuration. The fact that in a Poncelet polygon we can move the points along the conic translates to the fact that 
beyond projective transformations, the $(n_4)$ configuration admits additional degrees of freedom: in other words, {\it it is movable}!

We proceed with three theorems of increasing structural complexity. 
They ensure the existence of configurations that close up, based on Poncelet polygons and the $\lines$-, $\points$-constructions. We start with a situation that characterises the situations 
for constructions which have exactly three rings of points.

\begin{theoremA*} 
Let $P=(P_1,P_2\upto P_m)$ be a Poncelet polygon and let $a,b,c$ each be distinct positive integers strictly smaller than $m/2$. Then
\[
\pts[c]\cdot\lns[b]\cdot
\pts[a]\cdot\lns[c]\cdot
\pts[b]\cdot\lns[a]\big
(P)=P.
\]

\end{theoremA*}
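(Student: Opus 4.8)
The plan is to read the left-hand side as a path through the Poncelet grid of $P$ and its dual. By Theorems~\ref{ponceletGrid1} and~\ref{thm:DualPonceletPolygon}, together with the self-similarity of Poncelet grids and the projective equivalence of their rings \cite{Ber87,LaTa07,Sch07}, every point-ring and line-ring produced by the six operations --- first $\lns[a](P)$, then $\pts[b]\cdot\lns[a](P)$, and so on --- is again a ring of the Poncelet grid, respectively the dual grid, of $P$. Since there are only finitely many such rings, it then remains to identify the one on which the composition lands and to verify that it is $P$ with its original labelling.

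For the identification I would introduce coordinates. Put the classical group-law parameter on the genus-one curve covering the conic that carries $P$, so that $P$ becomes an arithmetic progression with common difference $1/m$ and some phase $\varphi$. In this parameter the two elementary operations are affine: by the addition law the chord through the points with parameters $s$ and $t$ is tangent to a grid conic at the ``mean'' parameter $\tfrac12(s+t)$, and dually the intersection of the tangents with parameters $s$ and $t$ has parameter $\tfrac12(s+t)$. Hence $\lns[i]$ advances the phase by $\tfrac{i}{2m}$ while $\pts[i]$ retards it by $\tfrac{i}{2m}$ --- the relation $\pts[i]\cdot\lns[i]=\mathrm{id}$ recorded above being the balanced instance --- and, within a single Poncelet grid, the phase determines the ring. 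Running the six operations starting from $\varphi$ gives the net phase change $\tfrac{a}{2m}-\tfrac{b}{2m}+\tfrac{c}{2m}-\tfrac{a}{2m}+\tfrac{b}{2m}-\tfrac{c}{2m}=0$, so the final ring shares the conic, the phase, and the cyclic indexing of $P$ --- it \emph{is} $P$.

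I expect the genuine work to be in two places. The first is the structural claim that applying $\lns[x]$ and then $\pts[y]$ inside a grid ring keeps one inside the same grid; this is where the Poncelet grid theorems and the self-similarity results do the real work, and it is cleanest for odd $m$, where all grid rings of a given kind are projectively equivalent --- matching the simpler argument promised for that case. The second is making ``phase'' genuinely intrinsic: the conic carrying $P$ is a quotient of its Poncelet curve by an involution while the various grid conics are quotients by different involutions, so one has to check that the phase bookkeeping passes through these changes of double cover unharmed by the accompanying reflections. Finally, the hypotheses that $a,b,c$ are pairwise distinct and strictly below $m/2$ are precisely what keeps every intermediate ring a nondegenerate Poncelet $m$-gon --- distinct vertices and edges, no accidental coincidences --- so that it stays among the finitely many grid rings and the phase argument applies at each of the six steps.
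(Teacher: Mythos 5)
Your route is genuinely different from the paper's main proof, but it is essentially the paper's own \emph{alternative} sketch (the billiard/shift argument of the section on billiard geometry and local coordinate systems), presented there as a conceptual explanation rather than as the proof. The paper's actual proof of Theorem~A is more pedestrian and more self-contained: it sets up the explicit construction on three rings of a Poncelet grid, proves a local incidence lemma (Lemma~\ref{corelemma}: four tangents drawn at the vertices of a grid quadrilateral are concurrent) by a direct coordinate computation in the appendix (including a separate treatment of the special symmetric positions where the naive computation degenerates), converts that into the commutation identity $\pts[b]\cdot\lns[0]\cdot\pts[a](L)=\pts[a]\cdot\lns[0]\cdot\pts[b](L)$ by careful index chasing, and then finishes with a purely formal cancellation. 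What your approach buys is conceptual transparency --- the identity becomes the telescoping sum $\tfrac a2-\tfrac b2+\tfrac c2-\tfrac a2+\tfrac b2-\tfrac c2=0$, and Theorems~B and~C become immediate; what the paper's approach buys is that every step is checkable without importing the structure theory of Poncelet grids beyond Theorem~\ref{ponceletGrid1}, and it works uniformly for even $m$, where the rings fall into two projective-equivalence classes and the odd-$m$ shortcut fails.

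That said, as written your argument has a genuine gap exactly where you suspect it. The crux is the ``mean parameter'' claim: that the chord joining the points of parameters $s$ and $t$ touches its caustic at parameter $\tfrac12(s+t)$, \emph{with one single parametrization that is simultaneously valid on every conic of the confocal family}. This is not a formal consequence of the addition law on one elliptic curve; each grid conic is a different quotient of the Poncelet curve, and transporting the parameter between them is precisely the content of the paper's Lemma~\ref{doublinglem} (the rescaling map $\tau$ between confocal conics), which the paper itself does not prove but cites from Levi--Tabachnikov and Schwartz. Second, your assertion that ``within a single Poncelet grid, the phase determines the ring'' is false: a point of ring $k$ at position $i$ and a point of ring $k+2$ at position $i-1$ project to the same phase on the reference conic, so equal phases do not pin down the object. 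The paper is explicit that the statement splits into (1) the final points lie on the \emph{same conic} as $P$ and (2) they sit at the same positions there, and that the shift argument ``relatively easily'' yields only (2). To close (1) you must track, along the six operations, which confocal conic each intermediate ring lives on --- equivalently, the phase \emph{difference} of the two tangent lines meeting at each constructed point --- and this bookkeeping is exactly what the paper's Lemma~\ref{lem:commute} performs. So your plan is viable and matches a road the authors deliberately left as a sketch, but to be a proof it needs the doubling lemma proved (or properly quoted) and the same-conic half of the statement argued separately rather than absorbed into the phase count.
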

 { This theorem shows that starting with a Poncelet polygon with initial ring of vertices $P$, the incidence structure given by the symbol $m\#(a,b;c,a;b,c)$ closes up and produces a $(3m_{4})$ configuration.} 
We postpone the proof of this crucial fact to the next section. First we show how this local fact implies much more general situations.

Assume that Theorem A is already proven. Based on this we show that operations of the form  $(\pts[{b_i}]\cdot\lns[{a_i}])$ commute with each other when applied to a Poncelet polygon.

\begin{theoremB*} 
Let $P$ be a Poncelet polygon and let $a,b,c,d<m/2$. Then:
\[
 (\pts[{d}]\cdot\lns[{c}]) (\pts[{b}]\cdot\lns[{a}])(P)
 =
 (\pts[{b}]\cdot\lns[{a}]) (\pts[{d}]\cdot\lns[{c}])(P).
 \]

\end{theoremB*}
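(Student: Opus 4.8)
The plan is to distill Theorem~A into two ``palindrome'' identities among the operations $\lns[\cdot]$ and $\pts[\cdot]$, after which the commutation relation follows from one application of each. Apart from Theorem~A itself, the only structural facts I use are that $\lns[i]$ and $\pts[i]$ are mutually inverse: the relation $\pts[i]\cdot\lns[i]=\mathrm{id}$ is already recorded in the text, and dually $\lns[i]\cdot\pts[i]=\mathrm{id}$, since for lines $l_{j},l_{j\pm i}$ the two points $l_{j}\wedge l_{j-i}$ and $l_{j+i}\wedge l_{j}$ both lie on $l_{j}$, so their join is $l_{j}$. The Poncelet grid theorems (Theorems~\ref{ponceletGrid1} and~\ref{thm:DualPonceletPolygon}), together with the standard extensions discussed there, guarantee that each intermediate ring produced from a Poncelet polygon — $\lns[a](P)$, $\pts[b]\cdot\lns[a](P)$, and so on — is again a ring lying on, resp.\ tangent to, a conic and carrying the same Poncelet structure (a ``dual Poncelet polygon'' in the line case); so every composite below is defined and stays inside the class where Theorem~A, and its projective dual, are available.

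I would first reformulate Theorem~A as follows: for distinct $a,b,c<m/2$, the operator $\Phi:=\pts[c]\cdot\lns[b]\cdot\pts[a]\cdot\lns[c]\cdot\pts[b]\cdot\lns[a]$ is the identity on Poncelet $m$-gons. Writing $\Phi=U\cdot V$ with $U=\pts[c]\cdot\lns[b]\cdot\pts[a]$ and $V=\lns[c]\cdot\pts[b]\cdot\lns[a]$, and using $\lns[i]^{-1}=\pts[i]$ and $\pts[i]^{-1}=\lns[i]$ to compute $U^{-1}=\lns[a]\cdot\pts[b]\cdot\lns[c]$, the equation $\Phi=\mathrm{id}$ becomes $V=U^{-1}$, i.e.
\[
\lns[c]\cdot\pts[b]\cdot\lns[a]=\lns[a]\cdot\pts[b]\cdot\lns[c]\qquad\text{(i)}
\]
on Poncelet $m$-gons; inverting both sides of (i) gives the dual identity
\[
\pts[c]\cdot\lns[b]\cdot\pts[a]=\pts[a]\cdot\lns[b]\cdot\pts[c]\qquad\text{(ii)}.
\]
In words: a three-term block $\lns[x]\cdot\pts[y]\cdot\lns[z]$ (or $\pts[x]\cdot\lns[y]\cdot\pts[z]$) with pairwise distinct indices may have the order of its factors reversed. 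By the remark of the previous paragraph, (i) holds at every point-Poncelet-polygon and (ii) at every line-ring of the form $\lns[\cdot](\text{Poncelet polygon})$ — which is exactly where I apply it.

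Granting (i) and (ii), the case of pairwise distinct $a,b,c,d$ is immediate: for any Poncelet polygon $P$,
\begin{align*}
(\pts[d]\cdot\lns[c])\,(\pts[b]\cdot\lns[a])(P)
&\overset{\text{(i)}}{=}
\pts[d]\cdot\lns[a]\cdot\pts[b]\cdot\lns[c]\,(P)\\
&\overset{\text{(ii)}}{=}
(\pts[b]\cdot\lns[a])\,(\pts[d]\cdot\lns[c])(P),
\end{align*}
where (i) is applied to the block $\lns[c]\cdot\pts[b]\cdot\lns[a]$ (distinct indices $a,b,c$) and (ii) to the block $\pts[d]\cdot\lns[a]\cdot\pts[b]$ (distinct indices $a,b,d$). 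The remaining cases are dispatched with the same two relations together with the unit laws $\pts[x]\cdot\lns[x]=\lns[x]\cdot\pts[x]=\mathrm{id}$: if $a=b$ then $\pts[b]\cdot\lns[a]=\mathrm{id}$ and both sides collapse to $\pts[d]\cdot\lns[c]\,(P)$ (symmetrically if $c=d$); if exactly one of $c,d$ coincides with one of $a,b$, then one of the two blocks above still has three distinct indices, so the corresponding reflection still applies and any leftover one-step operator cancels by a unit law; and if $\{a,b\}=\{c,d\}$ as multisets the operations $\pts[b]\cdot\lns[a]$ and $\pts[d]\cdot\lns[c]$ are equal or mutually inverse, hence commute. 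This is a short finite verification.

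The step I expect to require the most care is not any single computation but keeping the bookkeeping honest: $\lns[i]$ and $\pts[i]$ are only partially defined (the relevant points and lines must be distinct, which the standing non-degeneracy hypotheses supply) and are inverse to one another only on the appropriate classes of rings, so one genuinely needs the Poncelet grid theorems to certify that every intermediate object produced from a Poncelet polygon still lies on, resp.\ is tangent to, a conic — so that Theorem~A, its projective dual, and therefore (i) and (ii) may legitimately be invoked at that object. Conceptually, though, Theorem~B is nothing more than the observation that Theorem~A \emph{is} the pair of reflection relations (i)/(ii) in disguise, after which commutativity is one application of each.
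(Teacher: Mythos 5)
Your proof is correct and is essentially the paper's argument: your reflection identity (i) is exactly the paper's key consequence (2) of Theorem A, namely $\lns[c]\cdot\pts[b]\cdot\lns[a](P)=\lns[a]\cdot\pts[b]\cdot\lns[c](P)$, and your derivation of the commutation from (i) and its inverse (ii) is just a more direct arrangement of the paper's commutator computation. The only substantive additions are cosmetic pluses: you make the domain bookkeeping for (ii) explicit and you dispatch the cases of coinciding indices, which the paper leaves implicit.
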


\begin{proof}
Since $AB=BA$ if and only if $ABA^{-1}B^{-1}=\mathrm{id}$ and 
\[(\pts[{x}]\cdot\lns[{y}])^{-1}=\pts[{y}]\cdot\lns[{x}],\]
to prove Theorem B, it suffices to show that

\begin{equation}
\pts[{c}]\cdot\lns[{d}]\cdot \pts[{a}]\cdot\lns[{b}] 
\cdot
\pts[{d}]\cdot\lns[{c}]\cdot \pts[{b}]\cdot\lns[{a}] 
(P)=P. \tag{1}
\end{equation}

\noindent
By multiplying  both sides of the conclusion of Theorem A by $\lns[a]\cdot\pts[b]\cdot
\lns[c]$ and canceling, we get 
\[
\lns[{c}] \cdot \pts[{b}] \cdot \lns[{a}]
(P)
=
\lns[a]\cdot\pts[b]\cdot
\lns[c]\,
(P), \tag{2}
\]
 if $P$ is a Poncelet polygon. 
In addition, Theorem A holds for any three distinct positive integers less than $m/2$. Thus 
\[
\pts[{c}]\cdot\lns[{d}]\cdot \pts[{a}]\cdot
\lns[{c}] \cdot\pts[{d}]
\cdot
\lns[{a}] 
(P)=P.
\]
Since $\lns[{b}]\cdot \pts[{b}] = \text{id}$, we can insert it as we like, so
\[
\pts[{c}]\cdot\lns[{d}]\cdot \pts[{a}]\cdot
\lns[{c}] \cdot\pts[{d}]
\cdot
\underbrace{\lns[{b}]
\cdot \pts[{b}]}_{\mathrm{id}}\cdot\lns[{a}] 
(P)=P.
\]
Applying replacement (2) (using letters $b,c,d$), 
\[
\pts[{c}]\cdot\lns[{d}]\cdot \pts[{a}]\cdot
\underbrace{\lns[{c}] \cdot\pts[{d}]
\cdot
\lns[{b}]}_{\lns[{b}]\cdot\pts[{d}]\cdot\lns[{c}]}
\cdot \pts[{b}]\cdot\lns[{a}] 
(P)=P,
\]
so (1) 
holds. Thus, Theorem B follows, given Theorem A. 
\end{proof}

\bigskip
\noindent
Finally, we use Theorem B to show that the construction denoted by 
\[m\#(a_{1}, b_{1}; \cdots; a_{k}, b_{k})\] produces a trivial celestial configuration, with four points on each line and four lines through each point. By $[x,y,\ldots]$ we denote the multiset, containing elements with their multiplicity.
We show:
\begin{theoremC*} 
Let $P$ be a Poncelet polygon, and let $a_1,b_1,a_2,b_2\upto a_k,b_k$ all be positive integers strictly smaller than $m/2$ where adjacent entries (taken cyclically) have distinct values.
Furthermore let $[a_1\upto a_k]=[b_1\upto b_k]$ as multisets. Then we have
\[
\pts[{b_k}]\cdot\lns[{a_k}]\cdot\ldots\cdot \pts[{b_2}]\cdot
\lns[{a_2}] \cdot\pts[{b_1}]
\cdot
\lns[{a_1}] 
(P)=P.
\]
\end{theoremC*}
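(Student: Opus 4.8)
The plan is to deduce Theorem C from Theorem B by a purely combinatorial/algebraic argument: once we know that all the operators $T_i := \pts[{b_i}]\cdot\lns[{a_i}]$ commute when applied to a Poncelet polygon, the composite $T_k\cdots T_2 T_1$ acts on $P$ the same way as any reordering of the factors. So the strategy is to rearrange the word $T_k\cdots T_1$ into a form that is visibly the identity, using (a) commutativity from Theorem B and (b) the cancellation identity $\pts[y]\cdot\lns[x]\cdot\pts[x]\cdot\lns[y] = \mathrm{id}$ together with $\pts[i]\cdot\lns[i]=\mathrm{id}$, and (c) the multiset hypothesis $[a_1\upto a_k]=[b_1\upto b_k]$, which is exactly what guarantees such a cancellation-friendly rearrangement exists.

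First I would set up the formal framework carefully. The operators $\lns[a]$ and $\pts[b]$ are \emph{not} globally invertible maps on all point-sequences, and they are not defined on arbitrary inputs, so I cannot simply say "the group generated by the $T_i$ is abelian.'' Instead I would restrict attention to the orbit of a fixed Poncelet polygon $P$ under the monoid of words in the symbols $\lns[a],\pts[b]$ ($1\le a,b< m/2$) that keep us inside the class of Poncelet polygons inscribed in the relevant Poncelet-grid conics — by Theorems~\ref{ponceletGrid1} and~\ref{thm:DualPonceletPolygon} each such operation sends a Poncelet polygon to a Poncelet polygon, so every intermediate $P_j$ in Construction~\ref{con:mainconstr} really is a Poncelet polygon and Theorem~B applies to it. Within this orbit the relations I need all hold: $\lns[i]\pts[i]=\mathrm{id}$, $\pts[i]\lns[i]=\mathrm{id}$ (from the $\pts[i]\cdot\lns[i]=\mathrm{id}$ identity in the preliminaries, read in both directions on Poncelet polygons), and $T_iT_j=T_jT_i$ on the orbit (Theorem~B).

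Next comes the combinatorial core. Write $N_i=\lns[{a_i}]$ and $M_i=\pts[{b_i}]$, so the word to be shown trivial is $W = M_kN_k\cdots M_2N_2M_1N_1$. Using Theorem~B I may permute the blocks $T_i=M_iN_i$ freely; I will choose the permutation to match up each $\lns$ with an equal $\pts$. Concretely, since $[a_1\upto a_k]=[b_1\upto b_k]$, there is a bijection $\sigma$ of $\{1\upto k\}$ with $b_{\sigma(i)}=a_i$ for all $i$. I would like to interleave the $T_i$'s so that the word reads $\cdots M_{j'}N_{j'}M_jN_j\cdots$ with $b_{j'}=a_j$, producing an adjacent factor $N_{j'}M_j = \lns[{a_{j'}}]\pts[{b_j}]$; but that is only the identity when $a_{j'}=b_j$. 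So the cleaner route is the one already used in the proof of Theorem~B: induct on $k$, at each step using commutativity to bring to the rightmost position a block $T_{i_0}$ with $b_{i_0}$ equal to $a_1$ (possible by the multiset condition — some $b$ matches $a_1$), so that after $T_1=M_1N_1=\pts[{b_1}]\lns[{a_1}]$ the pattern $\lns[{a_1}]$ at the far right meets... hmm. Let me instead phrase the induction as: reduce the claim for the symbol of length $k$ to the claim for a symbol of length $k-1$ by a single application of Theorem~A (which handles the length-$3$ building block $\pts[c]\lns[b]\pts[a]\lns[c]\pts[b]\lns[a]=\mathrm{id}$, equivalently a "rotation'' $\lns[c]\pts[b]\lns[a]=\lns[a]\pts[b]\lns[c]$ on Poncelet polygons, exactly equation~(2) in the Theorem~B proof), peeling off one matched pair of letters from the multiset at a time. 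The base cases $k=1,2$ are immediate: $k=1$ forces $a_1=b_1$, contradicting the distinct-adjacent-entries hypothesis (taken cyclically), so $k=1$ is vacuous, and $k=2$ says $\pts[{b_2}]\lns[{a_2}]\pts[{b_1}]\lns[{a_1}]=\mathrm{id}$ with $\{a_1,a_2\}=\{b_1,b_2\}$ and $a_1\ne b_1$, hence $a_1=b_2,\ a_2=b_1$, giving $\pts[{a_1}]\lns[{b_1}]\pts[{b_1}]\lns[{a_1}]=\pts[{a_1}]\lns[{a_1}]=\mathrm{id}$.

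The main obstacle I anticipate is purely bookkeeping, not conceptual: making the inductive "peel off a matched pair'' step rigorous when the two matched letters are far apart in the word and when adjacent-value collisions threaten to appear after a commutation. I would handle this by always moving a block next to its partner \emph{via Theorem~B} (which has no nondegeneracy hypothesis beyond $a,b,c,d<m/2$, so intermediate equal-value adjacencies are harmless for the \emph{commutation} itself), then collapsing $\pts[x]\lns[x]=\mathrm{id}$, and only invoking Theorem~A in the "rotation'' form~(2) whose three letters $b,c,d$ I can always choose distinct because they are drawn from the remaining multiset and I am free in which matched pair to eliminate next. A short separate lemma stating precisely that every word $\pts[{b_\ell}]\lns[{a_\ell}]\cdots\pts[{b_1}]\lns[{a_1}]$ applied to a Poncelet polygon yields a Poncelet polygon (so all the Theorem~A/B applications are legal at every intermediate stage) should be stated and proved first, citing Theorems~\ref{ponceletGrid1} and~\ref{thm:DualPonceletPolygon}; with that in hand the induction goes through and Theorem~C follows.
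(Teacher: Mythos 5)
Your proposal is correct and follows essentially the same route as the paper: group the word into blocks $\pts[{b_i}]\cdot\lns[{a_i}]$, use Theorem~B to commute a block whose $b$-entry matches $a_1$ into the rightmost position, cancel $\lns[{a_1}]\cdot\pts[{b_i}]=\mathrm{id}$ to merge two blocks into one, and induct on $k$ down to a single self-cancelling block. Your extra care about restricting to the orbit of Poncelet polygons (so Theorems~A and~B legitimately apply at every intermediate stage) is a reasonable tightening of the paper's one-line remark that ``all intermediate point rings are Poncelet polygons,'' but it does not change the argument.
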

\begin{proof}
We group the expression above in pairs of operations:
\[
(\pts[{b_k}]\cdot\lns[{a_k}])\cdot \ldots \cdot
(\pts[{b_2}]\cdot
\lns[{a_2}]) \cdot(\pts[{b_1}]
\cdot
\lns[{a_1}]) 
(P).
\]
Since all intermediate point rings are Poncelet polygons, Theorem B implies that adjacent pairs 
commute with each other. Hence we can rearrange the order of the pairs such that we have
\[
(\pts[b_k]\cdot\lns[a_k]) \cdot \ldots 
\cdot
(\pts[b_1]\cdot\lns[a_1])\cdot
(\pts[b_i]\cdot\lns[a_i]) 
(P)
\]
with $b_i=a_1$ (such an index $b_i$ must exist because of the equal multiset property).
We can cancel $\vee_{a_{1}}\cdot \wedge_{b_i}$ since the two operations are inverse to each other, so the expression
 simplifies to
\[
(\pts[{b_k}]\cdot\lns[{a_k}]) \cdot \ldots 
\cdot
(\pts[{b_1}]
\cdot
\lns[{a_i}]) 
(P).
\]
By sequentially rearranging pairs
we can eliminate the $a_1,a_2,\ldots$, one after the other,
until we end up with an equation
\[
\pts[{b_j}]\cdot\lns[{a_k}]
(P)
\]
for identical $b_j$, $a_k$ which obviously holds.
\end{proof}

Theorem C, therefore, shows that if Theorem A holds, then all valid trivial celestial configurations $m\#(a_{1}, b_{1}; \cdots; a_{k}, b_{k})$ can be constructed beginning with $m$ points forming a Poncelet polygon.

\subsection{Proof of Theorem A}\label{proofofA}

The remaining work of this section is to prove Theorem A:
Every configuration of the type $m\#(a,b;c,a;b,c)$ closes up properly.
We will give an explicit construction that creates a pre-$(n_4)$ configuration from a given Poncelet polygon. 
After that we will prove the correctness of the construction and show how it implies Theorem A.

\begin{figure}[H]
\begin{center}
\includegraphics[width=0.95\textwidth]{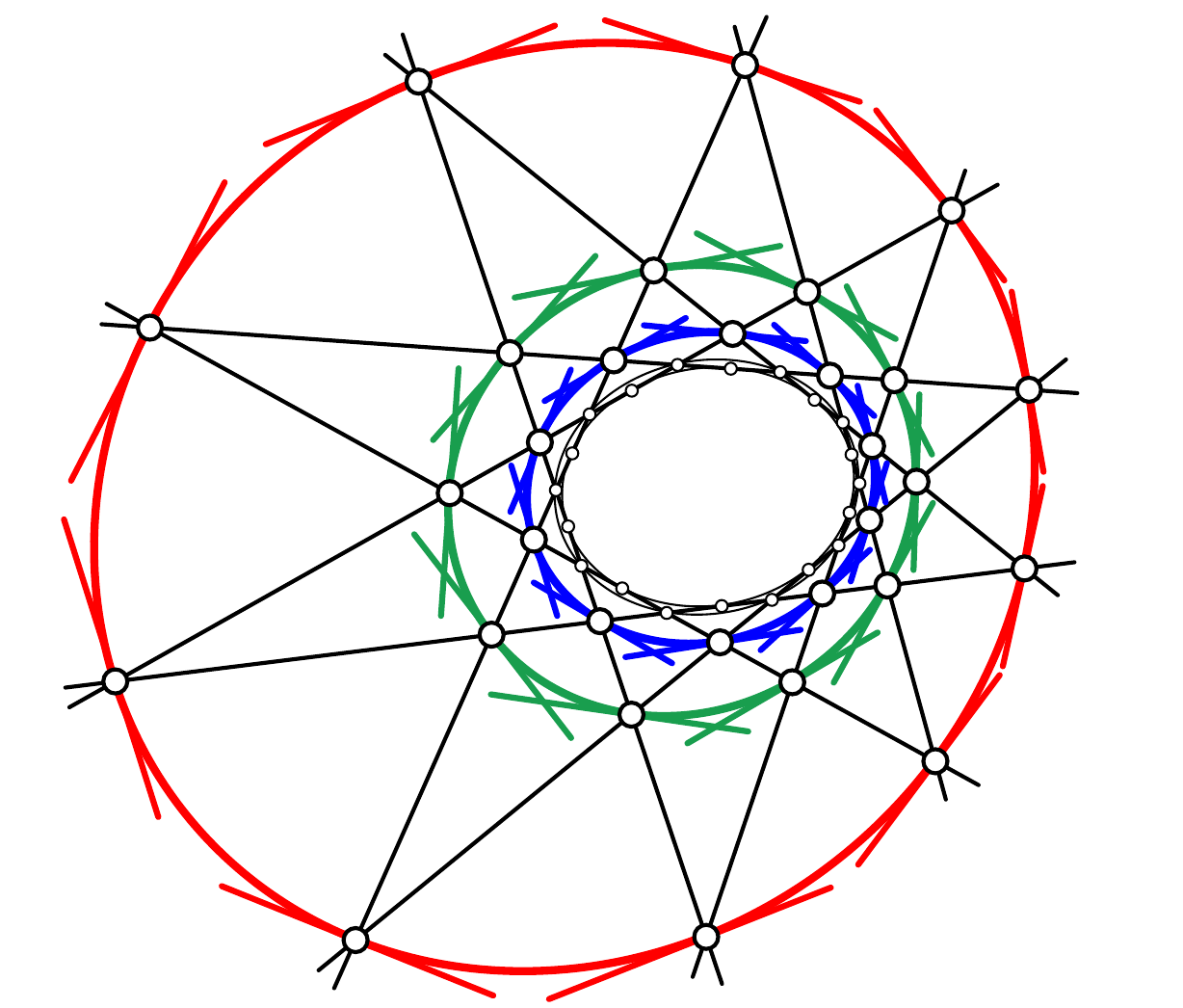}
\begin{picture}(0,0)
\put(-165,158){\footnotesize $1$}
\put(-170,149){\footnotesize $0$}
\put(-173,139){\footnotesize $1$}
\put(-193,124){$2$}
\put(-206,93){$3$}
\put(-238,27){$4$}
\put(-166,185){$2$}
\put(-154,211){$3$}
\put(-136,253){$4$}

\end{picture}
\end{center}
\vskip-5mm
\captionof{figure}{Tangents to points in a Poncelet grid at rings with labels $a$, $b$ and $c$; in this case $a=2, b=3, c=4$. }\label{fig:star1}
\end{figure}

\subsubsection{The construction}\label{mainconstruction}

We start our construction with a Poncelet $m$-gon ($m\geq 7$),
and consider the lines $L=(l_1\upto l_m)$ that support its edges. 
Let $s$  be the greatest integer strictly below $m/2$.
Consider the following intersections between those lines, organised into $s$ rings of $m$ points
\[
P_1=\pts[1](L)\upto \ P_s=\pts[s](L).
\]
along with the points of tangency $P_0=\pts[0](L)$. Note these points $P$ are Poncelet grid points; they are not  the same points $P$ that we are using to construct the configuration in the construction 1 above.

From the Poncelet grid theorem (Theorem \ref{ponceletGrid1}) we know that the points of each ring $P_i$  lie on a conic $\mathcal{C}_i$. All conics are co-dependent with respect to the others. 
({\it Remark:}  up to projective transformation they could be represented by a collection of confocal conics).

Now, we pick three distinct natural numbers  $a,b,c$ between $1$ and $s$ (inclusive). We focus on the rings $P_a$, $P_b$ and $P_c$, and draw tangents to the corresponding conics 
$\mathcal{C}_a$, $\mathcal{C}_b$, $\mathcal{C}_c$. 
In our notation those three rings of tangents are
\[
L_a=
\lns[0]\cdot\pts[a]
(L),\ \quad
L_b=
\lns[0]\cdot\pts[b]
(L),\ \quad
L_c=
\lns[0]\cdot\pts[c]
(L).\ 
\]

\begin{figure}[H]
\begin{center}
\includegraphics[width=0.85\textwidth]{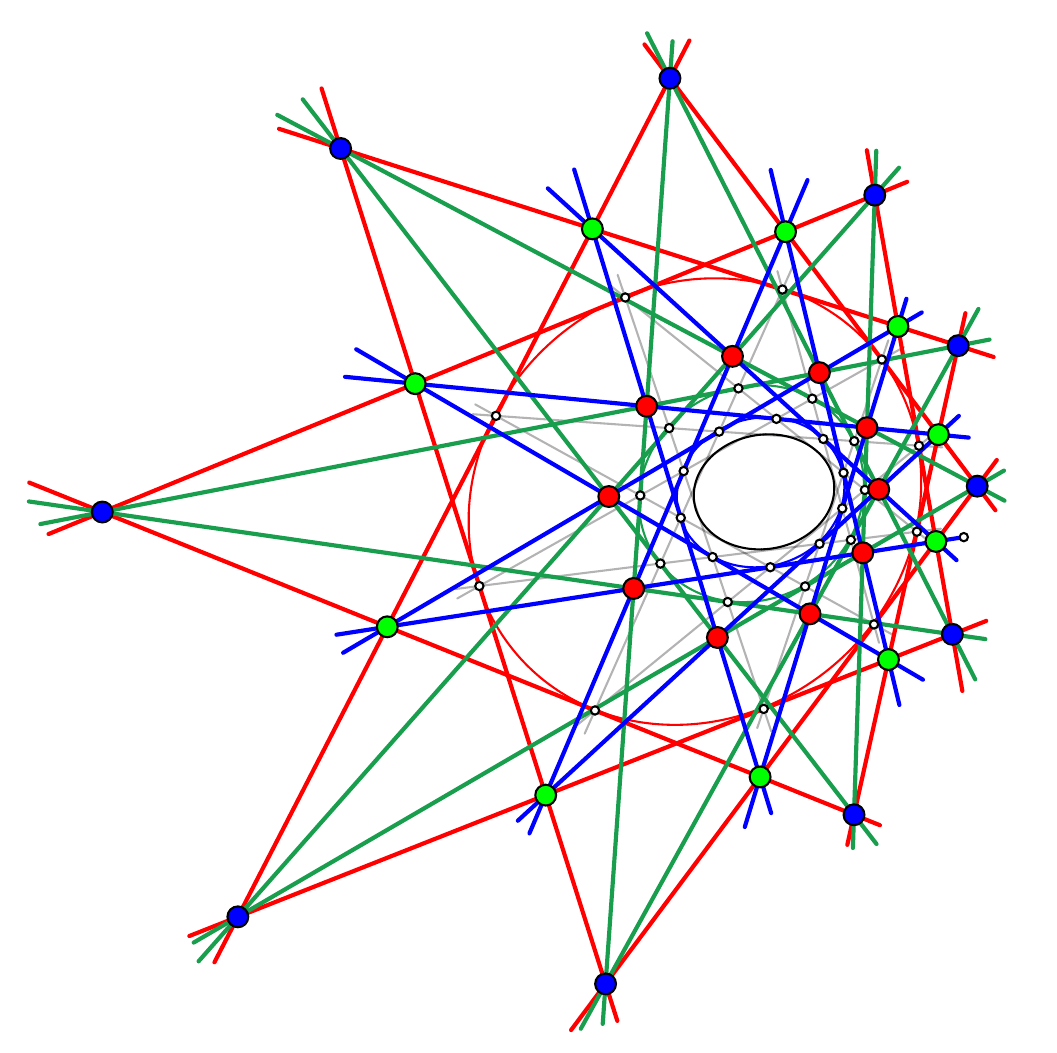}
\begin{picture}(0,0)

\end{picture}
\end{center}
\vskip-5mm
\captionof{figure}{The tangents are the lines of an $m\#(a,b;c,a;b,c)$ configuration, in this case, $10\#(2,3;4,2;3,4)$ beginning with the green points and red lines. Note that the green points are the points $
\pts[{a}]\cdot\lns[{0}]\cdot\pts[{c}]
(L)$, where $L$ is the set of thin black lines.} \label{fig:star2}
\end{figure}

The situation is illustrated in Figure \ref{fig:star1}. There $n=10$. We labelled the points on one line with the indices of the corresponding rings.
The three rings selected are $P_2$, $P_3$ and $P_4$, colored in 
{\it blue},
{\it green} and
{\it red}, respectively.

\noindent 
Amazingly, the construction is essentially already finished at that point.
We have:
\begin{theorem}\label{constrthm}
The $3m$ lines in $L_a,L_b,L_c$ 
support the following intersection pattern: for each pair of rings of tangent lines there are $m$ points in which two lines of each ring meet.
\end{theorem}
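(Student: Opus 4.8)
The plan is to reduce the statement entirely to the Poncelet grid theorems (Theorem \ref{ponceletGrid1} and Theorem \ref{thm:DualPonceletPolygon}) together with the index bookkeeping of the $\pts[]$/$\lns[]$ calculus already set up. The key observation is that the ring of tangents $L_a = \lns[0]\cdot\pts[a](L)$ is, by Theorem \ref{ponceletGrid1}, exactly the set of tangent lines to the Poncelet grid conic $\mathcal{C}_a$ at its $m$ points; and these $m$ points form a (possibly star- or multi-component) Poncelet polygon inscribed in $\mathcal{C}_a$ and circumscribing $\mathcal{X}$. So $L_a$ is itself a "ring of lines" of a Poncelet configuration, and I want to understand how two such rings $L_a$, $L_b$ intersect. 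First I would fix the indexing precisely: label the tangent line in $L_a$ at the point $(P_a)_j = l_j \wedge l_{j-a}$ as $\ell^a_j$, and similarly $\ell^b_j$, $\ell^c_j$; the claim to be proven is that for each pair, say $(a,b)$, there is a natural bijection $j \mapsto \sigma(j)$ such that $\ell^a_j$ and $\ell^b_{\sigma(j)}$ meet in a point, producing exactly $m$ such intersection points, and that the three families of $m$ points so obtained are the point-rings of the desired $m\#(a,b;c,a;b,c)$ configuration.

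The core of the argument is the following intersection lemma: the tangent to $\mathcal{C}_a$ at $(P_a)_j$ and the tangent to $\mathcal{C}_b$ at $(P_b)_{j'}$ pass through a common point precisely when the two points of tangency, viewed through the Poncelet-grid structure, are "adjacent" in the right sense — concretely, when there is a line of one of the dual Poncelet grids $L_i = \lns[i](P)$ through both tangency points, equivalently when $j' - j$ takes one of two specific values determined by $a$ and $b$. This is where I would invoke the projective equivalence of the grid rings (the results of \cite{LaTa07,Sch07} cited after Theorem \ref{ponceletGrid1}): for odd $m$ all rings $P_i$ are related by a single projective transformation carrying $\mathcal{X}$ to itself, and for even $m$ the same holds within each parity class. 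Under such a transformation, "tangent to $\mathcal{C}_a$ at a grid point" maps to "tangent to $\mathcal{C}_b$ at a grid point," and the incidence "two tangents meet" is preserved; chasing the index shift through this transformation is what pins down the value of $j' - j$ and shows the pattern is exactly a clean $m$-to-$m$ matching with no coincidences forced and none missing. Equivalently, and perhaps more self-containedly, I would dualise: Theorem \ref{thm:DualPonceletPolygon} says the line-rings $L_i = \lns[i](P)$ are each tangent to a conic $\mathcal{X}_i$, and the $\pts[]$ operation on these recovers grid points; applying $\pts[]/\lns[]$ identities of the form $\pts[i]\cdot\lns[i]=\mathrm{id}$ repeatedly should let me express each required intersection point as $\pts[\,\cdot\,](\text{some }L_i)$, at which point it literally lies on two of the tangent lines by construction.

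Assembling: once the intersection lemma is in hand, the three rings of $m$ points obtained from the pairs $(a,b)$, $(b,c)$, $(c,a)$ are the three point-rings, the lines $L_a\cup L_b\cup L_c$ are the $3m$ lines, each line carries four of the new points (two from each of the two pairs it participates in), and each new point lies on the two tangents defining it; counting gives incidence numbers $(3m_4)$, and the symbol reads off as $m\#(a,b;c,a;b,c)$ exactly because the index shifts in the lemma are the ones recorded in that symbol. Theorem A then follows immediately, since "the configuration closes up" is precisely the statement that the third point-ring, traced back through the construction, returns the starting ring — which is what having a genuine $(3m_4)$ configuration with this symbol asserts.

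\textbf{Main obstacle.} The hard part will be the intersection lemma — specifically, proving that two tangent lines coming from different grid conics $\mathcal{C}_a$, $\mathcal{C}_b$ actually do meet in the claimed pattern, and that the matching is exact (no extra coincidences, none missed). The co-dependence of all the grid conics gives them four common tangents but does not by itself say where tangent lines at specified points cross; bridging that gap is where I expect to need the projective-equivalence refinement of the Poncelet grid, or a careful explicit parametrisation of $\mathcal{X}$ by the Poncelet dynamics so that the "shift by $a$" and "shift by $b$" operations become translations on an elliptic curve and the intersection condition becomes a transparent linear condition on the parameters. Handling the $\gcd(i,m)>1$ degenerate sub-cases (where a ring splits into several smaller Poncelet polygons) and the even-$m$ parity split will require some additional care but should not change the essential argument.
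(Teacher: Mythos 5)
Your overall strategy---reduce Theorem~\ref{constrthm} to a local incidence statement about tangents to two Poncelet grid conics and then do index bookkeeping---has the same shape as the paper's argument. But the local statement you formulate is not the one that needs to be proved, and this is a genuine gap. You write that ``the tangent to $\mathcal{C}_a$ at $(P_a)_j$ and the tangent to $\mathcal{C}_b$ at $(P_b)_{j'}$ pass through a common point precisely when $j'-j$ takes one of two specific values''; in the projective plane any two lines pass through a common point, so as stated this is vacuous. What the theorem asserts is a \emph{four-fold concurrence}: at each of the $m$ distinguished points, two tangents to $\mathcal{C}_a$ and two tangents to $\mathcal{C}_b$ all meet. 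Concretely (this is Lemma~\ref{corelemma} of the paper), for the chain lines $l_0, l_a, l_b, l_{a+b}$, the tangents to $\mathcal{C}_a$ at $l_0\wedge l_a$ and at $l_b\wedge l_{a+b}$ and the tangents to $\mathcal{C}_b$ at $l_0\wedge l_b$ and at $l_a\wedge l_{a+b}$ are concurrent. Your assembly paragraph betrays the problem: ``each line carries four of the new points'' is incompatible with ``each new point lies on the two tangents defining it'' (these give $12m$ versus $6m$ incidences); the resolution is precisely that each new point lies on \emph{four} of the tangents, which is the fact your proposal never establishes.

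The mechanisms you propose do not deliver this concurrence. Projective equivalence of the grid rings carries ring $P_a$ to ring $P_b$ and tangents of $\mathcal{C}_a$ to tangents of $\mathcal{C}_b$, but it says nothing about where a tangent to $\mathcal{C}_a$ crosses a tangent to $\mathcal{C}_b$: the concurrence is a cross-ring statement that is neither preserved by nor deducible from a map between the rings, and for even $m$ the equivalence can fail between rings of opposite parity in any case. Likewise the formal identities $\wedge_{i}\cdot\vee_{i}=\mathrm{id}$ only shuffle labels and cannot create an incidence; the nontrivial label-swapping identity $\wedge_{b}\cdot\vee_{0}\cdot\wedge_{a}(L)=\wedge_{a}\cdot\vee_{0}\cdot\wedge_{b}(L)$ that the paper uses is itself a consequence of the four-tangent concurrence, not a route to it. The paper proves the concurrence by simultaneously diagonalising the co-dependent conics, rationally parametrising $\mathcal{X}$, and verifying two determinant identities by computer algebra, with a separate analysis of the symmetric special cases where the generic computation degenerates and the Poncelet hypothesis is genuinely needed; it also explains why the nearby Chasles--Graves theorem cannot simply be quoted. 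Your ``main obstacle'' paragraph correctly senses that this is where the difficulty sits, but the proposal as written contains no argument that closes it.
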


\noindent 
The situation is illustrated Figure \ref{fig:star2}.
There the lines of Figure~\ref{fig:star1} are extended, and we ``zoom out" to show all intersection points. In the present situation we obtain a 
$(30_4)$ configuration that corresponds to the construction
$10\#(2,3;4,2;3,4)$ beginning with the green points. Observe that our choice of $a,b,c$ again occurs as parameters here.
\begin{figure}[h]
\begin{center}
\includegraphics[width=0.95\textwidth]{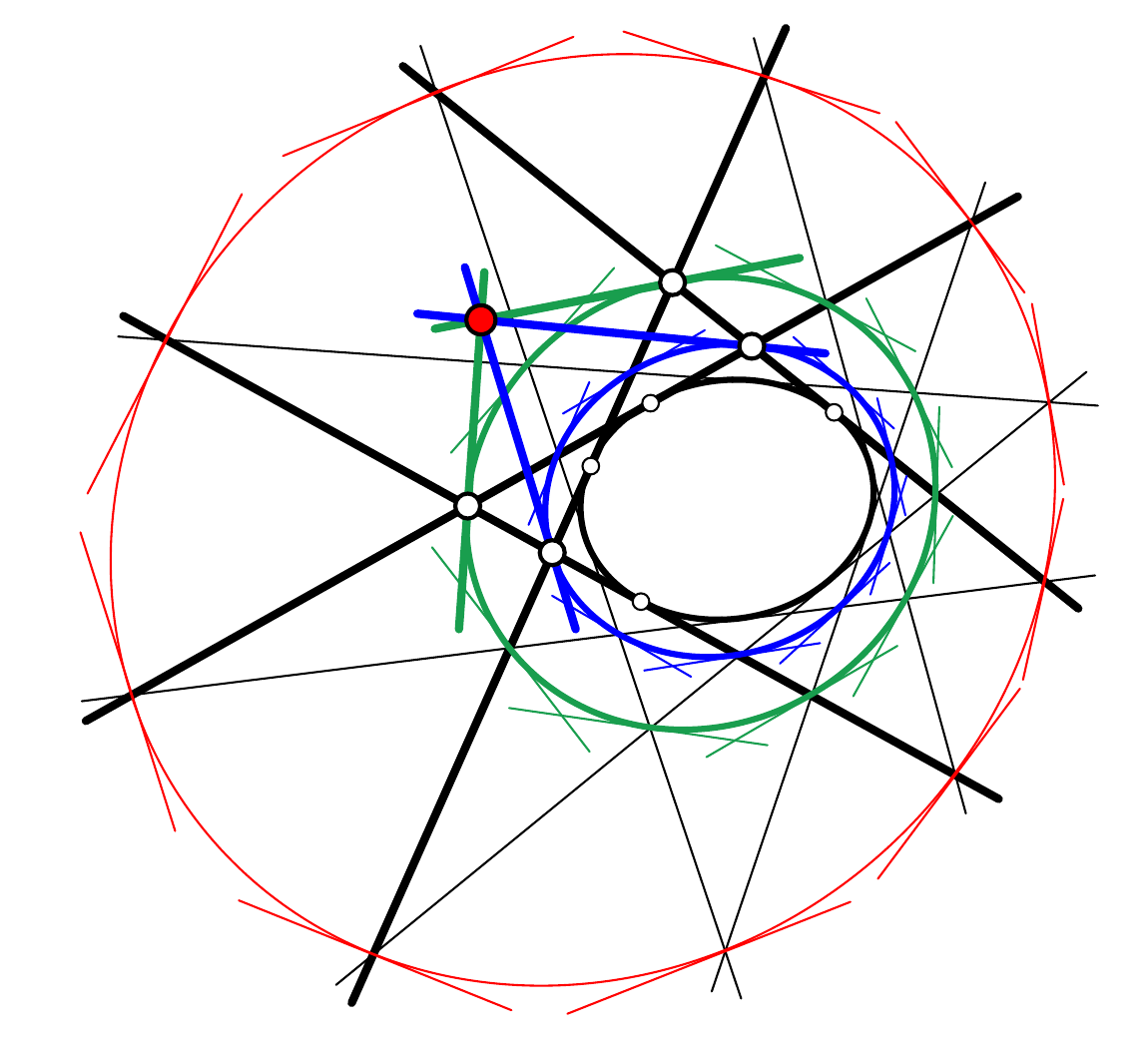}
\begin{picture}(0,0)

\end{picture}
\end{center}
\vskip-5mm
\captionof{figure}{Situation that plays a role for one of the $3m$ four-fold incidences.} \label{fig:star3}
\end{figure}

\subsubsection{A local incidence lemma}

\begin{figure}[h]
\begin{center}
\includegraphics[width=0.75\textwidth]{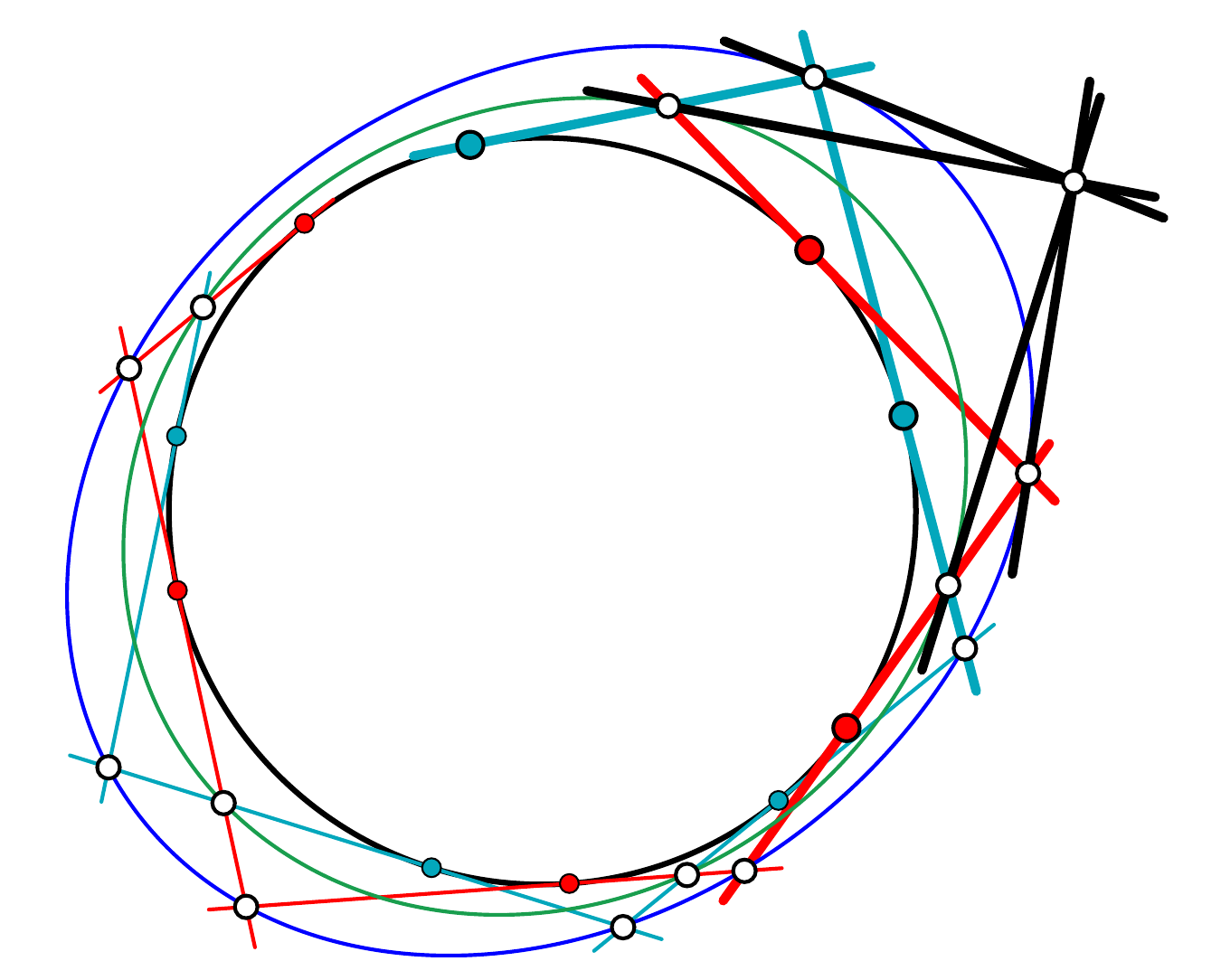}
\begin{picture}(0,00)
\put(-165,165){\footnotesize $0$}
\put(-100,148){\footnotesize $b$}
\put(-80,114){\footnotesize $a$}
\put(-110,40){\footnotesize $2a$}
\put(-175,28){\footnotesize $3a$}
\put(-220,82){\footnotesize $b+3a$ }
\put(-195,155){\footnotesize $b+4a$}
\put(-100,60){\footnotesize $b+a$}
\put(-150,26){\footnotesize $b+2a$}
\put(-220,112){\footnotesize $4a$}
\put(-88,195){\footnotesize $P$}
\put(-38,105){\footnotesize $P'$}
\put(-122,190){\footnotesize $Q$}
\put(-82,81){\footnotesize $Q'\!\!\rightarrow$}

\end{picture}
\end{center}
\vskip-2mm
\captionof{figure}{Two Poncelet chains (red and cyan) on the same supporting conics and the core incidence statement. 
}\label{fig:CGTchain}
\end{figure}

We now 
consider one specific quadruple concurrence at one specific point of the $(3m_4)$ configuration. 
Since the situation is totally symmetric it is sufficient to prove the occurrence of one such concurrence to show the existence of 
all $3m$ such concurrences.
Figure \ref{fig:star3}
highlights the core of the situation, around one of the quadruple intersections of the green and blue lines.

\medskip

The crucial fact we need to prove for our construction to work is (loosely speaking)
{\it ``Whenever the local situation in Figure \ref{fig:star3} comes from a Poncelet grid then the two green and two blue tangents meet in a point"}.
In essence, this is a statement about a local incidence configuration.
For this we consider four lines $l_0,\ l_a,\ l_b, \ l_{a+b}$ from a Poncelet grid $L$ and show that they locally generate this incidence pattern.
From Theorem~\ref{ponceletGrid1} we know that the points in
$\wedge_i(L)$ 
lie on conics $\mathcal{C}_i$. The following Lemma isolates the core incidence pattern.

\begin{lemma}\label{corelemma}
Let $l_0,l_1,l_2,\ldots$ be the lines of a Poncelet chain  tangent to a  
conic~$\mathcal{X}$, and let $a$, $b$ be such that
$l_0,\ l_a,\ l_b, \ l_{a+b}$ are pairwise distinct lines.
Let $\mathcal{B} = \mathcal{C}_a$ and 
$\mathcal{G} = \mathcal{C}_b$  be the conics passing through the Poncelet grid points for rings $a$ and $b$ respectively. Consider four points 
$
P=l_0\wedge l_a,\ 
P'=l_b\wedge l_{a+b},\ 
Q=l_0\wedge l_b,\ 
Q'=l_a\wedge l_{a+b}.\ 
$
Then the tangents 
$
\mathcal{B}\cdot P,\ 
\mathcal{B}\cdot P',\ 
\mathcal{G}\cdot Q,\ 
\mathcal{G}\cdot Q'
$ 
meet in a point.
\end{lemma}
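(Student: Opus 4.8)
\emph{Plan.} First I would unwind the combinatorics of the four points. The points
$P=l_0\wedge l_a$, $Q=l_0\wedge l_b$, $P'=l_b\wedge l_{a+b}$, $Q'=l_a\wedge l_{a+b}$ are the vertices of a quadrilateral whose four sides, read cyclically, are $l_0=PQ$, $l_b=QP'$, $l_{a+b}=P'Q'$, $l_a=Q'P$; since all four sides are tangent to $\mathcal{X}$, this is a quadrilateral \emph{circumscribed about} $\mathcal{X}$, with $\{P,P'\}$ and $\{Q,Q'\}$ the two pairs of opposite vertices and $PP'$, $QQ'$ its two diagonals. Because $P,P'\in\mathcal{B}$, the two tangents $\mathcal{B}\cdot P$ and $\mathcal{B}\cdot P'$ meet at the pole of the diagonal $PP'$ with respect to $\mathcal{B}$, and likewise $\mathcal{G}\cdot Q$ and $\mathcal{G}\cdot Q'$ meet at the pole of $QQ'$ with respect to $\mathcal{G}$. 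Hence the Lemma is equivalent to the single identity
\[
\operatorname{pole}_{\mathcal{B}}(PP')=\operatorname{pole}_{\mathcal{G}}(QQ').
\]
From Theorem~\ref{ponceletGrid1} I may freely use that $P,P'$ lie on $\mathcal{B}=\mathcal{C}_a$, that $Q,Q'$ lie on $\mathcal{G}=\mathcal{C}_b$, and that $\mathcal{X},\mathcal{B},\mathcal{G}$ are co-dependent, i.e.\ lie in a common co-pencil.

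Next I would normalise the co-pencil. For a (generic) Poncelet polygon supported by nested ellipses the co-pencil of $\mathcal{X},\mathcal{B},\mathcal{G}$ has four distinct, pairwise complex common tangents, and the diagonal triangle of this complete quadrilateral of tangents is self-polar for \emph{every} conic of the co-pencil; taking it as the triangle of reference makes $\mathcal{X},\mathcal{B},\mathcal{G}$ simultaneously diagonal — that is, up to a real projective transformation $\mathcal{X},\mathcal{B},\mathcal{G}$ are confocal conics with common foci $F_1,F_2$, and $\mathcal{B}\neq\mathcal{G}$ since $a\neq b$ (this is the remark already recorded in Section~\ref{sect:prelim}). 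Since the conclusion is a polynomial identity in the data, the finitely many degenerate co-pencils are then covered by density.

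The geometric heart is the reflection (optical) property of confocal conics. The lines $l_0$ and $l_a$ are the only two tangents from $P\in\mathcal{C}_a$ to $\mathcal{X}$, and reflection off $\mathcal{C}_a=\mathcal{B}$ at $P$ carries lines tangent to $\mathcal{X}$ to lines tangent to $\mathcal{X}$; hence it interchanges $l_0\leftrightarrow l_a$, so the tangent $\mathcal{B}\cdot P$ bisects the pair $(l_0,l_a)$ at $P$ (equivalently, $l_0,l_a$ are equally inclined to $PF_1$ and $PF_2$). The same holds at $P'$ for $(l_b,l_{a+b})$ and $\mathcal{B}$, at $Q$ for $(l_0,l_b)$ and $\mathcal{G}$, and at $Q'$ for $(l_a,l_{a+b})$ and $\mathcal{G}$. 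Thus the four lines in the Lemma are exactly the ``confocal angle bisectors'' of the circumscribed quadrilateral $PQP'Q'$ — the projective counterpart of the elementary fact that the four angle bisectors of a tangential quadrilateral pass through its incentre; and the Poncelet grid guarantees that at all four vertices one is consistently taking the bisector of the same kind, since within the confocal family the conics $\mathcal{C}_i$ are of a fixed type.

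It remains to prove that these four confocal bisectors are concurrent, and this is the step I expect to be the main obstacle: the local bisector description must be turned into a global coincidence. I would do it by explicit computation in the confocal normal form — parametrise the tangent lines of $\mathcal{X}$ by the uniformising coordinate $\psi$ in which the Poncelet (billiard) map is a shift $\psi\mapsto\psi+\delta$, so that $l_0,l_a,l_b,l_{a+b}$ correspond to $\psi,\ \psi+a\delta,\ \psi+b\delta,\ \psi+(a+b)\delta$, then write down $P,P',Q,Q'$, the conics $\mathcal{C}_a,\mathcal{C}_b$, and the two poles above, and verify their equality. The arithmetic-progression structure of the four parameters, together with the co-conicality relations supplied by Theorem~\ref{ponceletGrid1}, is exactly what makes the resulting trigonometric/elliptic identity collapse; a check in the doubly symmetric special case (a reflection interchanging $P\!\leftrightarrow\!Q$ and $P'\!\leftrightarrow\!Q'$) already shows that the coincidence forces $\mathcal{B}$ and $\mathcal{G}$ to be the two \emph{different} confocal conics through the vertices, matching $a\neq b$. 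A more synthetic alternative is to work with the reflections $\sigma_P,\sigma_Q,\sigma_{P'},\sigma_{Q'}$ across the four bisectors and observe that $\sigma_Q\sigma_P$ and $\sigma_{P'}\sigma_{Q'}$ both carry the line $l_a$ to the line $l_b$ while acting compatibly on the pencil through $F_1,F_2$, which should force a common fixed point; I expect the parametrised computation to be the safer road to a fully rigorous proof.
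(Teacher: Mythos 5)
Your reduction is sound as far as it goes: identifying $PQP'Q'$ as a quadrilateral circumscribed about $\mathcal{X}$, rewriting the claim as $\operatorname{pole}_{\mathcal{B}}(PP')=\operatorname{pole}_{\mathcal{G}}(QQ')$, normalising the co-pencil to a confocal (equivalently, simultaneously diagonal) family, and recognising the four tangents as confocal angle bisectors all match the geometric picture the paper develops. The paper's own proof takes essentially the same first steps — simultaneous diagonalisation, a rational parametrisation $\varphi(t)=(t^2-1,2t,-t^2-1)$ of the tangents to $\mathcal{X}$, explicit diagonal matrices for $\mathcal{C}_a$ and $\mathcal{C}_b$ — and then \emph{actually verifies} the two determinant identities $\det(\mathcal{B}P,\mathcal{B}P',\mathcal{G}Q)=\det(\mathcal{B}P,\mathcal{B}P',\mathcal{G}Q')=0$ by computer algebra. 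Your proposal stops exactly where the work begins: the concurrence of the four bisectors is the entire content of the lemma, and you defer it to a computation you do not carry out. The analogy with ``the four angle bisectors of a tangential quadrilateral meet at the incentre'' is not a proof here, and the paper devotes a whole subsection (the ``flawed CGT'' discussion) to showing that this very picture, with only the hypotheses you list (three confocal conics, a circumscribed quadrilateral with opposite vertex pairs on $\mathcal{B}$ and $\mathcal{G}$, tangents at those vertices), is \emph{false} in general: the confocal conic through two given points, hence the choice of bisector, is not unambiguous, and the wrong choice destroys the concurrence.

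Your one sentence meant to dispose of this — ``the Poncelet grid guarantees that at all four vertices one is consistently taking the bisector of the same kind'' — is precisely the unproven step. It is also where the Poncelet hypothesis genuinely enters: in the paper's proof the generic case is a polynomial identity that holds for \emph{any} four tangents to $\mathcal{X}$ once the grid conics are written down, but there are special symmetric positions of $P,P'$ (e.g.\ $\{u,v\}=\{-t,-s\}$ in their parametrisation) where the diagonal conic through $P,P'$ is not determined by those two points, and there the paper must extend the two Poncelet subchains by one more step and use the chain structure to pin down $\mathcal{B}$ and $\mathcal{G}$ compatibly. A blanket ``density'' appeal does not cover these cases, because they occur for a generic co-pencil; what degenerates is the reconstruction of the conic from the point pair, not the pencil. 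So the verdict is: correct setup and correct intuition, but the lemma itself — the identity of the two poles, including its non-generic configurations — remains unproved in your write-up.
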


\noindent
In fact we may consider the Poncelet subchains
\[
l_0,l_a,l_{2a},l_{3a},\ldots  \quad\text{and}\quad l_b,l_{a+b},l_{2a+b},l_{3a+b},\ldots \]
as two  different Poncelet chains supported by the same conics.

 This highlights the fact that the essence of the last lemma is more of a continuous nature in which $b$ might even vary smoothly.
The situation is illustrated in Figure~\ref{fig:CGTchain}. The two Poncelet chains are colored red and cyan. 
A self-contained proof of this lemma by direct calculation is presented  in Appendix~A of this article. 
The spirit of that proof is very much based on a coordinate-level approach and uses invariant theoretic arguments. In that sense it is similar
to the approaches we take in the companion paper \cite{BGRGT24b}.
 Theorem~\ref{constrthm} can be directly derived  from Lemma~\ref{corelemma}: 

\begin{proof} {(\it of Theorem \ref{constrthm}):}
We only focus on the occurrence of one quadruple coincidence, since the rest follows by symmetry.
We defined
$L_a=\lns[0](\pts[a](L)), 
L_b=\lns[0](\pts[b](L)) $
for an initial collection $L=(l_0,l_1,l_2,\ldots)$ of  lines of a Poncelet polygon.
The role of the two sequences in Lemma~\ref{corelemma} is played by the two 
subsequences
 \[
 (l_i,l_{i+a},l_{i+2a},l_{i+3a},\ldots)\quad
 \mathrm{ and }\quad
 (l_{i+b},l_{i+b+a},l_{i+b+2a},l_{i+b+3a},\ldots),\] 
 with $i$ being the index of an initial point. As usual indices are counted modulo~$m$.
 Both sequences are Poncelet chains with respect to the same conics:
   the circumscribed conic $\mathcal{X}$  and $\mathcal{B}=\mathcal{C}_a$, the conic on which the points $\pts[a](L)$ lie. 
 Similarly, the intersections of $l_{i+k\cdot a}$ and   $l_{i+b+ k\cdot a}$
 lie on the conic  $\mathcal{G}=\mathcal{C}_b$ for $k=0,1,2,\ldots$.   
 By a suitable index shift we may assume $i=0$. Applying Lemma~\ref{corelemma} after this shift we get
 \[
 l_i\wedge l_{i+a},\quad
 l_{i+b}\wedge l_{i+a+b},\quad
 l_i\wedge l_{i+b},\quad
 l_{i+a}\wedge l_{i+a+b},
 \]
 the tangents to the respective conics meet in a point. This is exactly what we want to prove.
\end{proof}
 \medskip

\subsubsection{Chasing indices}

The previous considerations yield a proof of Theorem~\ref{constrthm}, and ensure that from each Poncelet $m$-gon with $m\geq 7$ we can construct a configuration 
with $3m$ points and lines such that on each line there are (at least) 4 points and through each point there are (at least) 4 lines, i.e.\ we have a pre-$(n_4)$ configuration.

Nevertheless, it does not yet prove Theorem A which makes much more specific claims about the labels and indices of each of the constructed points and lines. 
To derive it at that point we have to create a careful bookkeeping of how we apply Lemma~\ref{corelemma}. Refer to Figure~\ref{fig:chase} for relations to the drawing.
Recall that the points were constructed by the procedure explained in Section~\ref{mainconstruction}. Each ring of points was related to one of the indices $a$, 
$b$, $c$.
We assume that the blue conic and lines were associated with the index $a$, and the green conic is associated with index $b$. 
Let $L=(l_1,l_2\upto l_m)$ be the lines of the initial central Poncelet polygon.

A careful analysis of the construction behind Lemma~\ref{corelemma} shows that we get the following statement that allows us to swap indices around an operator 
$\pts[0]$, or dually $\lns[0]$:
\begin{lemma}\label{lem:commute}
With the settings above
 we get
\[
\pts[b]\cdot\lns[0]\cdot\pts[a]
(L)\ = \ 
\pts[a]\cdot\lns[0]\cdot\pts[b]
(L).
\]
\end{lemma}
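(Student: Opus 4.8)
The plan is to decode what Lemma~\ref{lem:commute} actually says in terms of the geometry already set up, and then reduce it to Lemma~\ref{corelemma} by a careful bookkeeping of which lines and conics play which role. The left-hand side $\pts[b]\cdot\lns[0]\cdot\pts[a](L)$ is built as follows: from the Poncelet lines $L=(l_1,\ldots,l_m)$ we form the ring of points $\pts[a](L)$ (these lie on $\mathcal{C}_a=\mathcal{B}$ by Theorem~\ref{ponceletGrid1}), then $\lns[0]$ takes tangents to $\mathcal{C}_a$ at those points, producing the ring of lines $L_a$, and then $\pts[b]$ intersects each such tangent with the one $b$ steps away. Symmetrically, the right-hand side produces $\pts[a]\cdot\lns[0]\cdot\pts[b](L)$, the analogous intersections of tangents to $\mathcal{C}_b=\mathcal{G}$. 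So the claim is an \emph{equality of point sets}: the $i$-th point obtained by intersecting two tangents to $\mathcal{C}_a$ equals the $i$-th point obtained by intersecting two tangents to $\mathcal{C}_b$, for a suitable matching of indices.

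First I would unravel the indices explicitly. A tangent to $\mathcal{C}_a$ at the Poncelet-grid point $l_j\wedge l_{j+a}$ is, in the language of Lemma~\ref{corelemma}, the line $\mathcal{B}\cdot P$ with $P=l_0\wedge l_a$ after an index shift by $j$. Two such tangents $b$ steps apart correspond to the pair $P=l_j\wedge l_{j+a}$ and (shifting by $b$) $P'=l_{j+b}\wedge l_{j+a+b}$, exactly the two points $P,P'$ of Lemma~\ref{corelemma}. Dually, a tangent to $\mathcal{C}_b$ at $l_j\wedge l_{j+b}$ is $\mathcal{G}\cdot Q$ with $Q=l_j\wedge l_{j+b}$, and the partner tangent is $\mathcal{G}\cdot Q'$ with $Q'=l_{j+a}\wedge l_{j+a+b}$ — again precisely the $Q,Q'$ of Lemma~\ref{corelemma}. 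Lemma~\ref{corelemma} asserts that $\mathcal{B}\cdot P$, $\mathcal{B}\cdot P'$, $\mathcal{G}\cdot Q$, $\mathcal{G}\cdot Q'$ are concurrent; call that common point $X_j$. Then $X_j$ is simultaneously the intersection of the two $\mathcal{C}_a$-tangents (an entry of the LHS point set) and the intersection of the two $\mathcal{C}_b$-tangents (an entry of the RHS point set). Letting $j$ range over $\mathbb{Z}/m$ and tracking the labels carefully — using $\pts[i]\cdot\lns[i]=\mathrm{id}$ and the index conventions from Section~2.3 — yields that the two sequences of points coincide termwise, which is the asserted identity.

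The main obstacle is the index bookkeeping, not the geometry: the geometric content is entirely supplied by Lemma~\ref{corelemma}, but one must verify that the particular index shift $\pts[b]\cdot\lns[0]\cdot\pts[a]\mapsto\pts[a]\cdot\lns[0]\cdot\pts[b]$ matches the $P,P',Q,Q'$ labelling \emph{with the orientation conventions fixed in Section~2.3} (recall $\pts[i]$ shifts indices in the opposite direction to $\lns[i]$). In particular I would be careful that, since $\lns[0]$ sends a point of $\mathcal{C}_a$ to its tangent and $\pts[0]$ inverts this, the composite $\lns[0]\cdot\pts[a]$ really is the ``ring of tangents to $\mathcal{C}_a$'' and that intersecting two of them $b$ apart lands on $\mathcal{C}_b$ (this last point is the ``Similarly, the intersections of $l_{i+k a}$ and $l_{i+b+ka}$ lie on $\mathcal{G}=\mathcal{C}_b$'' observation from the proof of Theorem~\ref{constrthm}, which is exactly the ingredient Lemma~\ref{corelemma} needs). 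Once the correspondence of indices is pinned down, the proof is a one-line appeal to Lemma~\ref{corelemma} applied at each $j\in\mathbb{Z}/m$: the common point $X_j$ witnesses that the $j$-th entries of both sides agree, hence the sequences are equal.

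A clean way to present it: fix $j$, invoke Lemma~\ref{corelemma} with $l_0,l_a,l_b,l_{a+b}$ replaced by $l_j,l_{j+a},l_{j+b},l_{j+a+b}$ and with $\mathcal{B}=\mathcal{C}_a$, $\mathcal{G}=\mathcal{C}_b$ — legitimate because the sub-chains $(l_j,l_{j+a},l_{j+2a},\ldots)$ and $(l_{j+b},l_{j+a+b},\ldots)$ are Poncelet chains on the common conics $\mathcal{X}$ and $\mathcal{C}_a$, with intersections on $\mathcal{C}_b$, exactly as used in the proof of Theorem~\ref{constrthm}. The concurrence point $X_j$ lies on the two tangents to $\mathcal{C}_a$ (at $l_j\wedge l_{j+a}$ and $l_{j+b}\wedge l_{j+a+b}$), so it is the relevant entry of $\pts[b]\cdot\lns[0]\cdot\pts[a](L)$; it equally lies on the two tangents to $\mathcal{C}_b$ (at $l_j\wedge l_{j+b}$ and $l_{j+a}\wedge l_{j+a+b}$), so it is the corresponding entry of $\pts[a]\cdot\lns[0]\cdot\pts[b](L)$. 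Since this holds for every $j$ modulo $m$, the two lists of points coincide, proving Lemma~\ref{lem:commute}.
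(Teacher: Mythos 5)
Your proposal is correct and follows essentially the same route as the paper: both reduce the identity to Lemma~\ref{corelemma} applied at each index $j$, observing that the common concurrence point of the four tangents is simultaneously the $j$-th entry of $\pts[b]\cdot\lns[0]\cdot\pts[a](L)$ and of $\pts[a]\cdot\lns[0]\cdot\pts[b](L)$, so the sequences agree termwise. The only cosmetic difference is that you index the four Poncelet lines as $l_j,l_{j+a},l_{j+b},l_{j+a+b}$ where the paper uses $l_i,l_{i-a},l_{i-b},l_{i-a-b}$; both conventions yield matching indices on the two sides.
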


\begin{figure}[H]
\begin{center}
\includegraphics[width=.65\textwidth]{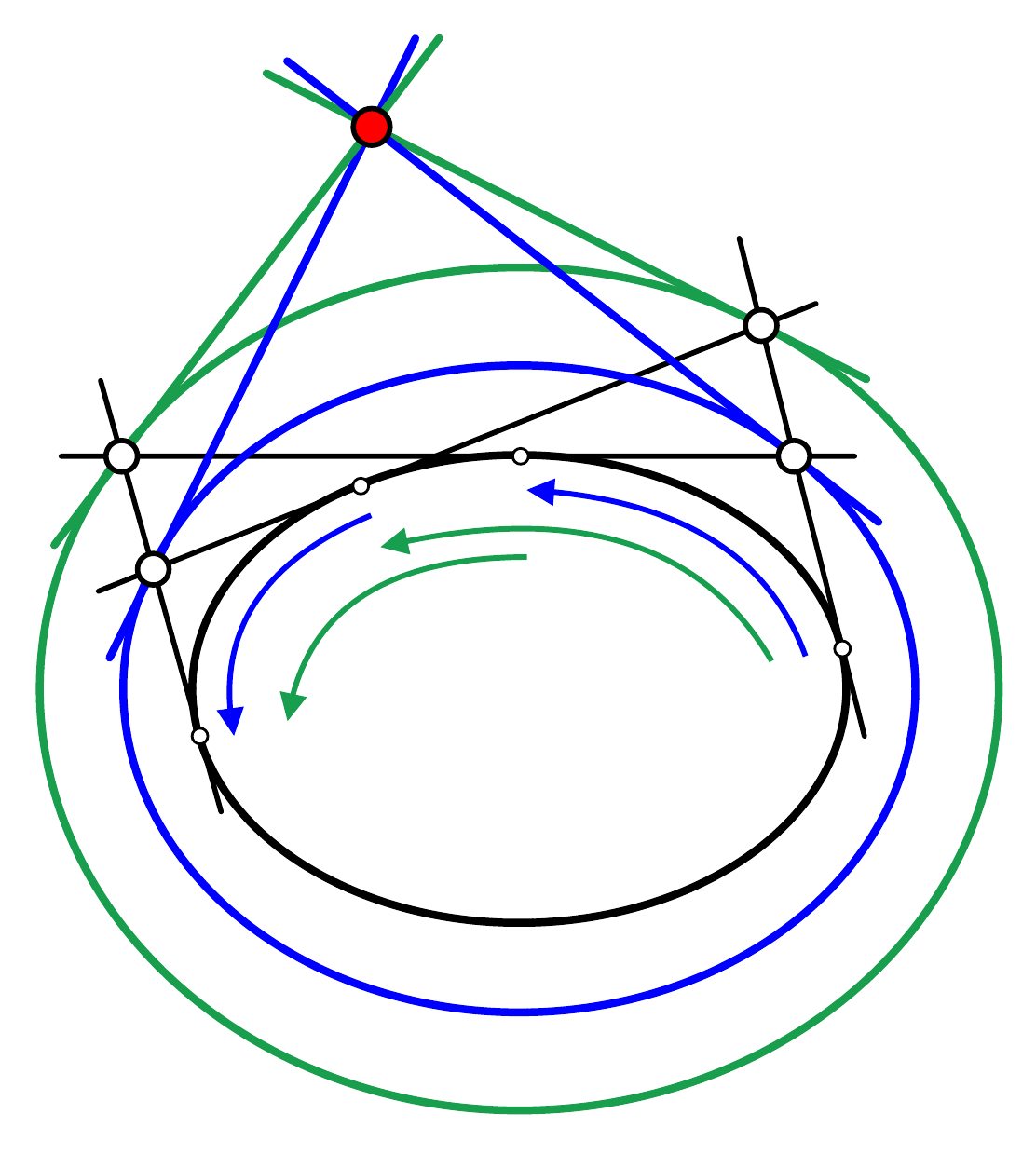}
\begin{picture}(0,0)
\put(-170,115){\footnotesize $a$}
\put(-100,136){\footnotesize $a$}
\put(-90,123){\footnotesize $b$}
\put(-156,103){\footnotesize $b$}
\put(-168,144){\footnotesize $i{-}a$}
\put(-112,153){\footnotesize $i{-}b$}
\put(-192,87){\footnotesize $i$}
\put(-72,100){\footnotesize $i{-}a{-}b$}
\put(-212,172){\footnotesize $l_i$}
\put(-56,188){\footnotesize $l_{i-a}$}
\put(-79,202){\footnotesize $l_{i-a-b}$}
\put(-232,155){\footnotesize $l_{i-b}$}

\put(-190,122){\footnotesize $A$}
\put(-207,138){\footnotesize $B$}
\put(-62,140){\footnotesize $C$}
\put(-70,169){\footnotesize $D$}

\put(-180,190){\footnotesize $F$}
\put(-155,200){\footnotesize $E$}
\put(-135,198){\footnotesize $G$}
\put(-115,210){\footnotesize $H$}
\put(-140,222){\footnotesize $X$}

\put(-285,60){\footnotesize $A=(\pts[a](L))_i$}
\put(-285,45){\footnotesize $B=(\pts[b](L))_i$}

\put(-285,30){\footnotesize $E=(\lns[0]\cdot\pts[{a}](L))_{i}$}
\put(-285,15){\footnotesize $F=(\lns[0]\cdot\pts[{b}](L))_{i}$}

\put(-285,220){\footnotesize $X=E\wedge G=(\pts[{b}]\cdot\lns[0]\cdot\pts[{a}](L))_i$}
\put(-275,205){\footnotesize $=F\wedge H=(\pts[{a}]\cdot\lns[0]\cdot\pts[{b}](L))_i$}

\put(-10,60){\footnotesize $C=(\pts[{a}](L))_{i-b}$}
\put(-10,45){\footnotesize $D=(\pts[{b}](L))_{i-a}$}
\put(-10,30){\footnotesize $G=(\lns[0]\cdot\pts[{a}](L))_{i-b}$}
\put(-10,15){\footnotesize $H=(\lns[0]\cdot\pts[{b}](L))_{i-a}$}

\end{picture}
\end{center}
\kern-3mm
\captionof{figure}{A detailed analysis of the line labels in the construction.} \label{fig:chase}
\end{figure}

\begin{proof}
Consider a concrete intersection of four lines in the construction of 
Section~\ref{mainconstruction}. Assume that the intersection comes from parameters $a$ (blue) and $b$ (green).
This means that from the lines $l_i$ tangent to the central conic exactly four are used in the partial construction that leads to the intersection.
Assume that these lines are $l_i,l_{i-a},l_{i-b}, l_{i-a-b}$.
Differences of indices between those lines that meet on the blue conic
must be $a$ and differences of indices between those lines that meet on the green conic must be $b$. Refer to Figure~\ref{fig:chase}  for the labelling.
The two points on the blue and on the green conic
are intersections of those lines. They are marked $A,B,C,D$ in the picture. From them we get:
\[
\begin{array}{lccl}
A=l_i\wedge l_{i-a}=(\pts[a](L))_i&&&
B=l_i\wedge l_{i-b}=(\pts[b](L))_i
\\[2mm]
C=l_{i-b}\wedge l_{i-a-b}=(\pts[a](L))_{i-b}&&&
D=l_{i-a}\wedge l_{i-a-b}=(\pts[b](L))_{i-a}
\\
\end{array}
\]
The corresponding tangent lines at those points are
\[
\begin{array}{lccl}
E=(\lns[0]\cdot \pts[a](L))_i&&&
F=(\lns[0]\cdot \pts[b](L))_i
\\[2mm]
G=(\lns[0]\cdot \pts[a](L))_{i-b}&&&
H=(\lns[0]\cdot \pts[b](L))_{i-a}
\\
\end{array}
\]
Finally, according to Lemma~\ref{corelemma} the red point can be derived in two different ways: Either by intersecting the blue lines $E$ and $G$ or by intersecting the green lines $F$ and $H$. 
The shift between $E$ and $G$ is $a$ while the shift between $F$ and $H$ is $b$.
We get:
\[
\begin{array}{rclcl}
X&=&E\wedge G&=&(\lns[0]\cdot \pts[a](L))_i \wedge (\lns[0]\cdot \pts[a](L))_{i-b}\\[2mm]
&=&F\wedge H&=&(\lns[0]\cdot \pts[b](L))_i\wedge 
(\lns[0]\cdot \pts[b](L))_{i-a}.\\
\end{array}
\]

\noindent
In other words,
\[
X\ = \ 
(\pts[b]\cdot\lns[0]\cdot \pts[a](L))_{i}\ = \ 
(\pts[a]\cdot\lns[0]\cdot \pts[b](L))_{i}.
\]
Since $i$ was generic, we get
\[
\pts[b]\cdot\lns[0]\cdot \pts[a](L)\ = \ 
\pts[a]\cdot\lns[0]\cdot \pts[b](L).\]
This proves the claim.
\end{proof}

\medskip

\noindent
We are finally in the position to prove Theorem A.

\begin{proof}(Of Theorem A)
For a Poncelet $m$-gon $P$ and indices $a,b,c$ less than $m/2$, we want to show
\[
\pts[c]\cdot\lns[b]\cdot
\pts[a]\cdot\lns[c]\cdot
\pts[b]\cdot\lns[a]
(P)=P.
\]

\noindent
We assume that $P$ is a Poncelet  $m$-gon $P$ and from it we first derive a  sequence of lines $L$ by the operation
\[L=
\lns[c]\cdot
\pts[0]\cdot\lns[a]
(P).
\] 
By the Poncelet grid theorems and its dual, while getting from $P$ to $L$ every intermediate construction step produces a Poncelet polygon of points or lines. Thus $L$ are the sides of a Poncelet Polygon as well and we can also get back by observing that
\[P=
\pts[a]\cdot
\lns[0]\cdot\pts[c]
(L).
\] 
By Lemma~\ref{lem:commute} we also have
\[P=
\pts[c]\cdot
\lns[0]\cdot\pts[a]
(L).
\] 

\noindent
Now we consider the following chain of reasoning:
\begin{align*} \pts[c]\cdot\lns[b]\cdot\pts[a]\cdot\lns[c]\cdot\pts[b]\cdot\lns[a](P) &= \pts[c]\cdot\lns[b]\cdot\pts[a]\cdot\lns[c]\cdot\pts[b]\cdot\underbrace{\lns[a]\cdot(\pts[a]}_{\text{id}}\cdot\,\lns[0]\cdot\pts[c](L)) \\
&= \pts[c]\cdot\lns[b]\cdot\pts[a]\cdot\lns[c]\cdot\underbrace{\pts[b]\cdot\lns[0]\cdot\pts[c]}_{\pts[c]\cdot\lns[0]\cdot\pts[b]}(L) \\
&=\pts[c]\cdot\lns[b]\cdot\pts[a]\cdot\underbrace{\lns[c]\cdot\pts[c]}_{\text{id}}\cdot\,\lns[0]\cdot\pts[b](L) \\
&= \pts[c]\cdot\lns[b]\cdot\underbrace{\pts[a]\cdot\lns[0]\cdot\pts[b]}_{\pts[b]\cdot\lns[0]\cdot\pts[a]}(L)\\
&= \pts[c]\cdot\underbrace{\lns[b]\cdot\pts[b]}_{\text{id}}\,\cdot\,\lns[0]\cdot\pts[a](L)\\
&= \pts[c]\cdot\lns[0]\cdot\pts[a](L)\\[2mm]
&= P
\end{align*}

\noindent
This finally finishes the proof of Theorem A.
\end{proof}

\noindent
If one compares the above proof to Figure~\ref{fig:star2},
one can literally recover our construction and the applications of Lemma~\ref{lem:commute}, which is in essence the ``four-tangents-meet-in-a-point'' statement of Lemma~\ref{corelemma}. Starting with one of the rings of points (say the green one) we first construct the inner set of (black) lines $L$. Every application of Lemma~\ref{lem:commute} in the above  sequence  of cancellations corresponds to jumping from one  ring of lines to another one that shares the same ring of points. 

\subsection{Closely related topics}

We presented the proof of our main Theorems A to C in a very constructive way. In this section we want to relate our construction to other concepts from the theory of discrete integrable systems. 
Each of these approaches is capable of deriving independent proofs of the our main theorem.

Poncelet's Porism is in a center of rich connections of various fields like elliptic functions, dynamical systems,
integrability, differential geometry, elementary geometry, the geometry of billiards and many more.
In this section we will describe different ways to attack some of our statement based on considerations from incircle nets and from elliptical billiards.

\subsubsection{Chasles--Graves Theorem}

There is an intimate relation between  Lemma~\ref{corelemma}
and a famous statement that was first discovered in 1843.  Since this connection provides lots of geometric insight we will elaborate on it here.

The Chasles--Graves Theorem (CGT) can be found in various sources in various
formulations~\cite{AlkZas07,BaFa91,Ber87,IzTa17} (and unfortunately with various degrees of correctness).
It goes back to Chasles and Graves (see also Darboux  \cite{ChaGra1841, Cha1865,Dar17}). Variants can also be found in Reye's work \cite{Rey1896}. 
The statement is about 4 lines tangent to a central conic.
In what follows we restrict ourselves to the case that the conics are ellipses if not explicitly stated otherwise. By this we avoid some of the intricacies related to
orientation, and the specific choice of intersections between a conic and a line.
Those intricacies are the source of several misinterpretations of this theorem that can be found in the literature.

We here literally quote a version of this statement 
that can be found in \cite{IzTa17}. This formulation is particularly useful in our context. The proof there is derived
via the geometry of billiards.

\begin{theorem}\label{graves} {\bf (CGT)}
Let $A$ and $B$ be two points on an ellipse. Consider the quadrilateral $ABCD$, made by the pairs of tangent lines from $A$ and $B$ to a confocal ellipse.

\noindent
{\it (1):} Its other vertices, $C$ and $D$, lie on a confocal hyperbola, and the quadrilateral is circumscribed about a circle.

\noindent
{\it (2):} Furthermore, if we intersect the lines $AC$, $AD$, $BC$ and $BD$, they have two additional intersections $E$ and $F$. Also, these two intersections lie on a conic 
(this time an ellipse) confocal to the other two.
\end{theorem}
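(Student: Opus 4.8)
The statement is classical and essentially decomposes into two independent assertions about confocal conics, so I would treat them separately and lean on the standard machinery of confocal families. First, recall the basic fact that through each point of the plane (off the axes) there pass exactly one ellipse and one hyperbola of a given confocal family, and that these two meet orthogonally; moreover the tangent line to a confocal conic at a point makes equal angles with the two focal radii (the optical property of confocal conics). The plan is to use this optical/reflection property as the single workhorse for part (1), and a projective-duality argument together with the confocal pencil structure (Theorem~\ref{thm:DualPonceletPolygon}, specialised to the confocal case, is exactly the right tool here) for part (2).

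For part (1) I would argue as follows. Let $\gamma$ be the outer ellipse carrying $A$ and $B$, and $\delta$ the confocal ellipse to which the four tangent lines are drawn; label the tangency points so that $AC, BC$ are the two tangents from the pair producing vertex $C$, and similarly for $D$. Applying the reflection property of $\delta$ at each tangency point shows that each side of the quadrilateral $ABCD$ reflects one focal radius to the other; chasing these equal-angle relations around the quadrilateral shows that the angle bisectors at the four vertices are concurrent, which is precisely the condition for $ABCD$ to be circumscribed about a circle. The same angle bookkeeping, applied now at the vertices $C$ and $D$ rather than along the sides, identifies the confocal conic through $C$ (resp.\ $D$) as a \emph{hyperbola} of the family rather than an ellipse, because the two tangent directions from $C$ to $\delta$ straddle the axis in the way characteristic of the hyperbolic branch; one can make this rigorous by computing the sum versus difference of focal radii, or by invoking the Graves construction directly. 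Alternatively, one may quote the Graves "string construction" (a confocal ellipse can be drawn by wrapping a closed loop of string around a smaller confocal ellipse), which makes the incircle claim and the confocal-hyperbola claim simultaneously transparent.

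For part (2), the cleanest route is projective. The four lines $AC, AD, BC, BD$ are the four sides of the complete quadrilateral on vertices $A, B, C, D$; beyond $A, B, C, D$ these lines have exactly two further intersection points $E = AC\cap BD$ and $F = AD\cap BC$. Now I would invoke the dual Poncelet-grid picture: the subchain $\ldots, AC, \ldots$ and its partner form two Poncelet chains on the confocal pair $(\gamma,\delta)$ in the sense used in Lemma~\ref{corelemma}, and the intersection points of "second-neighbour" tangents in a confocal Poncelet grid lie on a further member of the confocal family. Concretely, $E$ and $F$ are the points $\pts[2]$-type intersections associated with this local configuration, so by Theorem~\ref{ponceletGrid1} (read in the confocal normal form, where co-dependence of the grid conics becomes confocality) they lie on a single conic confocal to $\gamma$ and $\delta$; a short check of which branch — using that $E, F$ lie "outside" in the same sense that $A, B$ do — shows this conic is an ellipse. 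This is exactly the reasoning the paper uses to pass between rings of a Poncelet grid, now specialised to four lines.

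\textbf{Main obstacle.} The genuine difficulty is not the existence statements but the \emph{bookkeeping of branches and orientations}: deciding in each case whether the confocal conic one lands on is the ellipse or the hyperbola, and ensuring the four tangent lines are paired correctly (there are two tangents from $A$ and two from $B$, hence a choice of how they combine into the quadrilateral $ABCD$). As the authors themselves warn just before the statement, this is precisely where published versions of the Chasles--Graves theorem go wrong. I would handle it by fixing once and for all the convention that all conics in sight are ellipses/hyperbolas of one confocal family with a chosen orientation of each tangent line, and then verifying the ellipse-versus-hyperbola alternative by the sign of (sum of focal radii) $-$ (a constant), i.e.\ by evaluating the defining confocal parameter $\lambda$ at the point in question and checking which interval of the pencil it falls into. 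Everything else is a routine consequence of the optical property and the confocal-pencil structure already available from Section~\ref{sect:prelim}.
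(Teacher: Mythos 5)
First, be aware that the paper does not prove Theorem~\ref{graves} at all: it quotes the statement from Izmestiev--Tabachnikov \cite{IzTa17}, notes that the proof there is via the geometry of billiards, and remarks that part (2) ``follows exactly the same pattern'' and is omitted. So there is no in-paper argument to match; your sketch has to stand on its own, and while it is in the right general territory (optical property of confocal conics, Graves, Poncelet grid), it has gaps precisely at the two points where the theorem has content. For part (1), the reflection property only tells you that at each vertex the tangent to the confocal conic through that vertex bisects the angle between the two sides meeting there; ``chasing these equal-angle relations around the quadrilateral'' does not yield the \emph{concurrency} of the four bisectors --- that concurrency is equivalent to the existence of the incircle and is exactly what must be proved. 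The workable route is the one you mention only as an alternative: Graves' string construction gives that $|XT_1|+|XT_2|-\mathrm{arc}(T_1T_2)$ is constant on a confocal ellipse (with the analogous difference constant on a confocal hyperbola), from which the Pitot condition $|AC|+|BD|=|AD|+|BC|$ follows. Likewise, since \emph{every} point off the axes lies on one confocal ellipse and one confocal hyperbola, ``identifying the confocal conic through $C$ as a hyperbola rather than an ellipse'' is not the issue; the content of the claim is that $C$ and $D$ lie on the \emph{same} member of the family, and your sketch never addresses that coincidence.

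Second, your use of Theorem~\ref{ponceletGrid1} for part (2) is the right idea but needs two corrections. In the confocal normal form the Poncelet grid carries two families of conics: intersections $\ell_s\wedge\ell_t$ of tangent lines with the tangency-parameter \emph{difference} fixed lie on confocal ellipses (this is what Theorem~\ref{ponceletGrid1} records, for a closed polygon), while those with the \emph{sum} fixed lie on confocal hyperbolas; the latter family is not stated anywhere in the paper. Writing the tangency parameters from $A$ as $\tau,\ \tau+d$ and from $B$ as $\sigma,\ \sigma+d$ (the shifts are equal because $A$ and $B$ lie on one confocal ellipse), the pair $\{E,F\}=\{\ell_\tau\wedge\ell_\sigma,\ \ell_{\tau+d}\wedge\ell_{\sigma+d}\}$ is the constant-difference pair --- so the ellipse claim does follow from the (continuous version of the) grid theorem, noting that the four lines need not belong to a closed Poncelet polygon --- whereas $\{C,D\}=\{\ell_{\tau+d}\wedge\ell_\sigma,\ \ell_{\sigma+d}\wedge\ell_\tau\}$ is the constant-sum pair, so the hyperbola claim requires the radial half of the grid theorem, which you would have to prove separately. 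With that parameter bookkeeping in place both coincidence claims drop out and the incircle then follows from Graves/Pitot; as written, the two central coincidences are asserted rather than derived.
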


Part 2 of this lemma slightly extends the original formulation from \cite{IzTa17}. However, its proof follows exactly the same pattern as the one for (1) and will be omitted here. The situation is illustrated in Figure~\ref{fig:incircle}.

\begin{figure}[t]
\begin{center}
\includegraphics[width=0.55\textwidth]{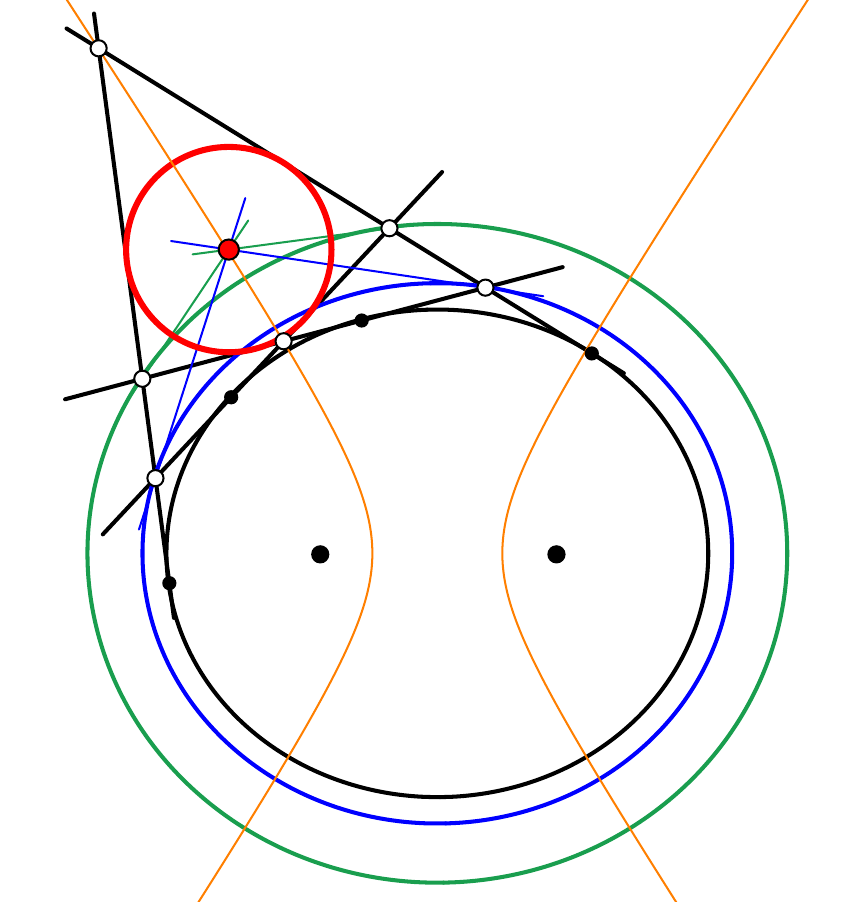}
\begin{picture}(0,0)
\put(-110,155){\footnotesize $B$}
\put(-170,118){\footnotesize $A$}
\put(-134,112){\footnotesize $C$}
\put(-167,189){\footnotesize $D$}
\put(-170,90){\footnotesize $E$}
\put(-90,140){\footnotesize $F$}
\end{picture}
\end{center}
\vskip-5mm
\captionof{figure}{The incircle property. The four tangents form the sides of a region that circumscribes a circle.} \label{fig:incircle}
\end{figure}

A pair of tangents to an inner ellipse from a point on an outer ellipse may be considered as a geometric reflection of a ray at the outer conic. 
This is the local situation around each of the points $A$, $C$, $E$ and $F$
(see \cite{Ta05}). 
Thus the angles between the two lines and the tangent are equal, or in other words the tangent at such a point is the angle bisector of the two lines meeting at the point. 
From the two possible angle bisectors it is the one pointing into the direction of the circle.

Comparing Figure~\ref{fig:incircle} with Figure~\ref{fig:chase} and Figure~\ref{fig:CGTchain} 
shows many structural commonalities:
The role of the conics $\mathcal{X}$, $\mathcal{G}$, $\mathcal{B}$ in the Poncelet grid
are now played by the three ellipses of the CGT. 
The conics being codependent in a Poncelet grid is (up to projective transformation) equivalent to the Euclidean statement that the ellipses are confocal. The CGT assumes tangency of the black lines to the inner conic and claims the existence of an incircle. Since the angle bisectors of two tangents to that circle pass through the center of the circle the tangents at the six points of the CGT meet in a point.
%
%
 In view of this close relation one might indeed be tempted to base the proof of 
Lemma~\ref{corelemma} on the Chasles--Graves Theorem (CGT). 
When we first encountered the similarity to our situation in Figure~\ref{fig:star3} to the CGT we were extremely optimistic that we could just quote the desired result from the literature about the CGT. It turned out that this is not the case. The references we found were either too weak for our situations, or they stated exactly what we wanted but turned out to be flawed when we checked the proofs and statements more closely. It comes as a surprise that a theorem of such a classical nature carries such subtleties that may easily lead to wrong formulations. Exactly these subtleties make it difficult to apply the CGT directly in the situation we need it for.
To demonstrate this we here explicitly give a flawed formulation that is similar to the ones we found in the literature.

\medskip
The subtle problem arises when one tries to combine the role of the tangents without giving an explicit way to relate the conic $\mathcal{B}$ 
 to the circle $\mathcal{C}$.
We here give a minimalistic version of a false statement (see \cite{RG24}) that can be found in the literature in similar ways.

\begin{notheorem}
{\bf (A flawed version of CGT):} Let $\mathcal{X}$ be an ellipse in the Euclidean plane and let $a,b,c,d$ be four distinct tangents to $\mathcal{X}$. Then the following statements are equivalent.
\begin{itemize}
\item[(i)] $a,b,c,d$ are tangent to a circle $\mathcal{C}$
\item[(ii)] The intersections  $P=a\meet b$ and $Q=c\meet d$ lie on a conic $\mathcal{R}$ confocal to $\mathcal{X}$.
\end{itemize}
Moreover, the tangents at $P$ and $Q$ to $\mathcal{R}$ meet in the center of the circle.
\end{notheorem}
\noindent
The problem here lies in the ``{\it Moreover$\ldots$}" part. The problem comes from the fact that the conic $\mathcal{R}$ may not be unambiguously defined from its properties stated in the statement. As a matter of fact this is  not the case for most of the drawings of the Chasles--Graves Theorem you will see. But there are some (not too degenerate) situations where this problem might arise.
\begin{figure}[ht]
\centering
\!\!\includegraphics[width=.3\textwidth]{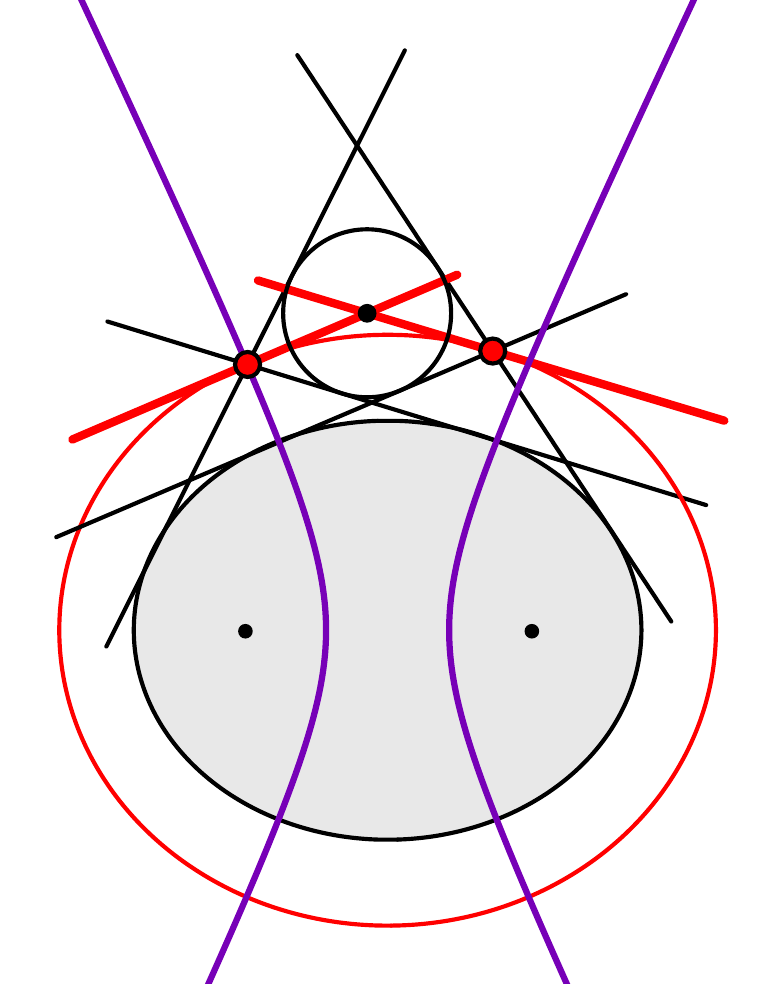}\;\;  
\includegraphics[width=.3\textwidth]{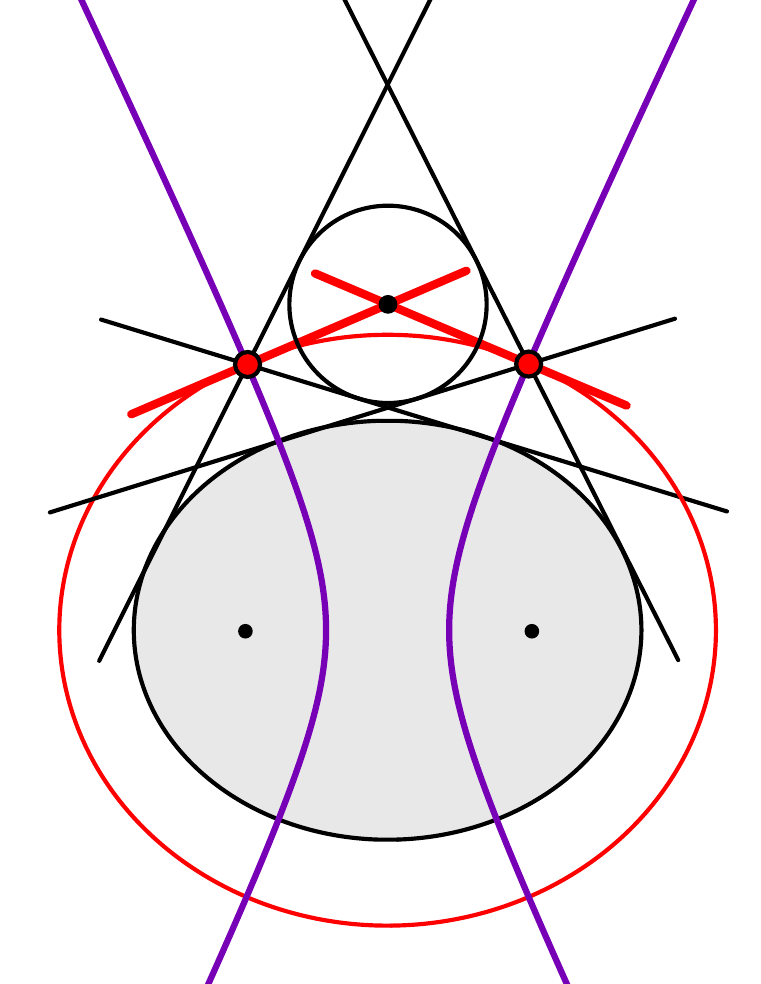}\;\; 
\includegraphics[width=.3\textwidth]{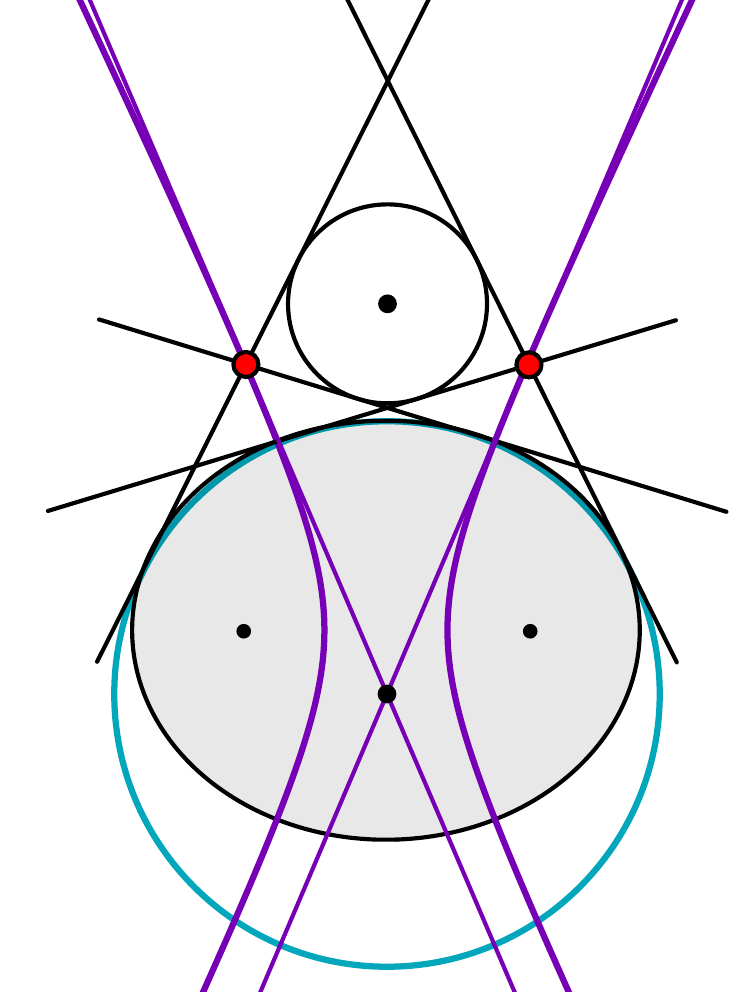}
\begin{picture}(0,0)
\put(-312,90){\footnotesize {$P$}}
\put(-265,93){\footnotesize {$Q$}}
\put(-188,92){\footnotesize {$P$}}
\put(-151,93){\footnotesize {$Q$}}
\end{picture}
\caption{The second conic through Point $P$ (purple conic in leftmost picture) may or may not pass as well through point $Q$ (in the middle image it passes through $Q$) and may generate a counterexample to the flawed version of CGT (right).}	
\label{fig:flaw}
\end{figure}

To see this consider the drawings in Figure \ref{fig:flaw}. The leftmost picture highlights the second confocal conic that passes through point $P$ (in purple). Usually, this conic does not pass through $Q$. However, in the particular situation
in which the points $P$ and $Q$ are symmetric with respect to the perpendicular bisector of the (real) foci of the conics then the role of the conic $\mathcal{R}$  may as well   be played by the purple conic (middle picture).
The right-hand picture shows the situation in which one only considers this conic (ignoring the red one). 
It is a conic {\it confocal to $\mathcal{X}$} and {\it passing through $P$ and $Q$} but its tangents do not pass through the center of the  (black) circle shown in the image tangent to $a,b,c,d$. 

However, the tangents pass through the center of {\it another} circle that arises in this situation. It is indicated in cyan in 
the rightmost picture of  Figure~\ref{fig:flaw}. This circle arises only in this particular geometric situation.
For this symmetric situation we may think of the situation as follows. The {\it suitable} choice for the red conic (the ellipse) is the black circle and the suitable choice for the purple conic (the hyperbola) is the  cyan  circle.
\medskip

In our situation this choice has to be explicitly created from the situation of the Poncelet grid and is not part of the hypotheses of the CGT. Specifically, we can derive the following information from the Poncelet grid situation:
\begin{itemize}
\item[(i)] Three confocal (i.e. codependent) conics 
$\mathcal{X}$, $\mathcal{B}$, $\mathcal{G}$.
\item[(ii)] a quadrilateral with two points in $\mathcal{B}$ and two points in $\mathcal{G}$ whose sides are tangent to $\mathcal{X}$.
 \item[(iii)] Tangents at those four points to the respective conics.
\end{itemize}
We want the corresponding tangents to meet in a point.
As the last example shows, this cannot be shown by the above hypotheses alone.
We need additional information that comes from the specific situation of being a Poncelet polygon.
In fact, it is possible to derive a proof of our main result based on the CGT but this would require additional homotopy or limit case arguments.

\subsubsection{Incircle nets}

\newcommand{\mybox}{%
           \mathrel{\raisebox{.1em}{%
           \reflectbox{\rotatebox[origin=c]{45}{$\square$}}}}}

\def\dwedge{\blacktriangleright\!\blacktriangleleft}

\begin{figure}[t]
\begin{center}
\includegraphics[width=0.75\textwidth]{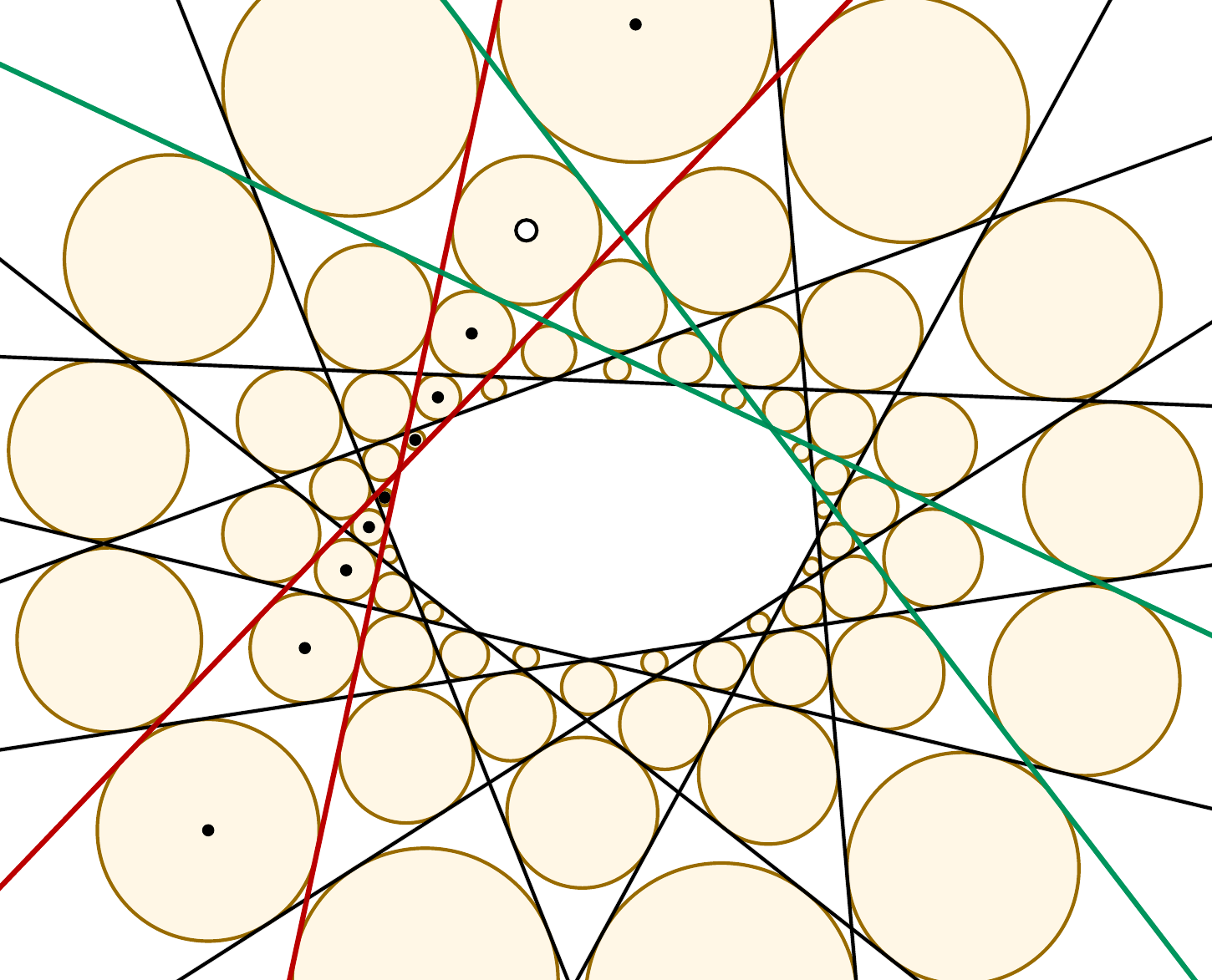}
\begin{picture}(0,0)
\put(-177,98){\footnotesize $1$}
\put(-170,108){\footnotesize $2$}
\put(-154,117){\footnotesize $3$}
\put(-132,119){\footnotesize $4$}
\put(-111,115){\footnotesize $5$}
\put(-98,105){\footnotesize $6$}
\end{picture}
\end{center}
\vskip-5mm
\captionof{figure}{The incircle property. The four lines forming the sides of a region that circumscribes a circle.}\label{fig:gridc1}
\end{figure}

Let us again consider the case that our Poncelet polygon is, without loss of generality, supported by a pair of confocal ellipses. We now take a holistic point of view. 
The fact that the local situation from Theorem~\ref{graves} occurs for many choices of supporting tangents in a Poncelet grid allows us to create many circles in the grid. 
The centers of such circles are potential candidates for points of an $(n_4)$ configuration.

Figure~\ref{fig:gridc1} illustrates how, in the cell complex created by the lines of a Poncelet grid,
each cell is circumscribing a circle. Such configurations of lines and circles were extensively studied in 
\cite{AkBo18}. The image also shows how circles that share the same two tangents have collinear centers.
In the picture, the centers for all circles simultaneously tangent to line $1$ and line $2$ are shown.
We will exploit exactly these collinearities for creating  lines in $(n_4)$ configurations.

To do so we need a precise system to label cells in such an arrangement. Here it is easiest to implicitly orient the lines to be able to talk about the relative position of a circle with respect to a line
(to do so in a specific way we rely on the fact that the situation is moved to the situation of confocal ellipses centered at the origin).
It turns out that the language most appropriate to describe the situation is the one of oriented halfspaces
in oriented projective geometry, or equivalently, topes in line arrangements in oriented matroid theory.
A beautiful treatment of oriented projective geometry can be found in the book of Stolfi \cite{Sto91}.
We want to avoid a full introduction into this topic here, since it is only a tool for bookkeeping in our case.
Instead, we will describe directly what happens. 

We consider our (projective) plane in which everything
takes place represented by pairs of antipodal points on the unit sphere.
A point $(x,y,z)$ on the unit sphere $S^2$ represents the corresponding projective point with these homogeneous coordinates. Thus the points $(x,y,z)$ and $(-x,-y,-z)$ represent the same geometric point. 
Now we deliberately distinguish between these two points and consider $S^2$ instead of $\mathbb{RP}^2$. Each point in the plane now has a positive and a negative representative. 
The positive halfspace associated to a line $l$ with homogeneous coordinates $(a,b,c)$ now is the set of all points $(x,y,z)$ with $ax+bx+cx>0$. 
Thus it contains all positive points on one side of the line and all negative points on the other side.

In  our Poncelet setup we now orient all lines such that  the center $(0,0,1)$  of the inner ellipse becomes a positive point with respect to these lines.
By $H_i^+$ and $H_i^-$  we denote the halfspaces corresponding to the positive and the negative side of line $i$.

\begin{figure}[t]
\begin{center}
\includegraphics[width=0.75\textwidth]{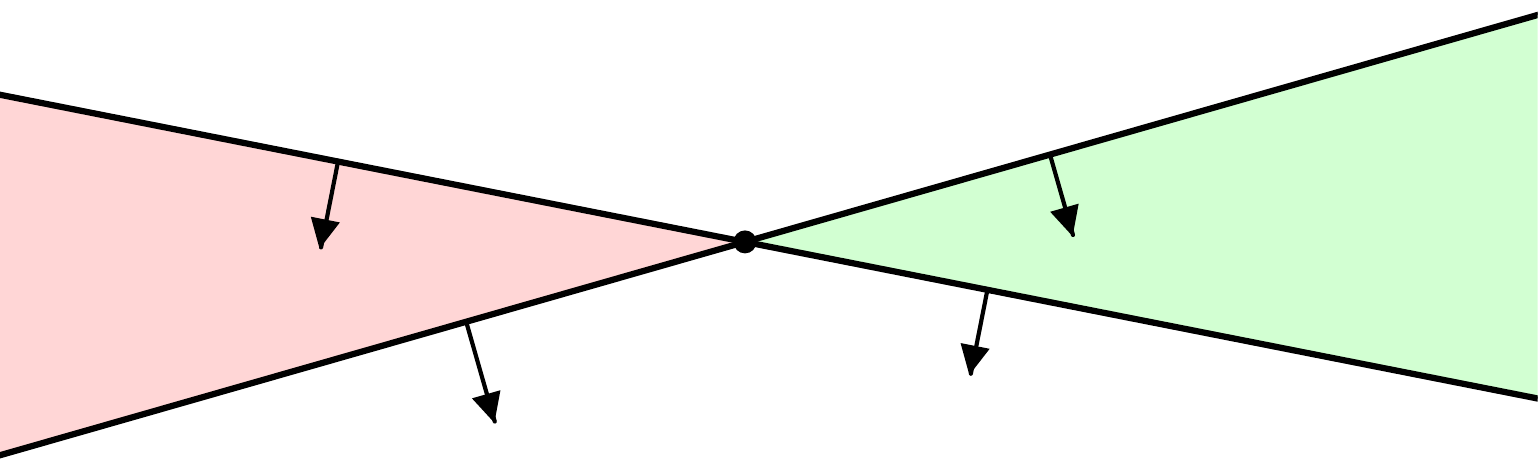}
\begin{picture}(0,0)
\put(-46,69){ $i$}
\put(-226,60){ $j$}
\put(-60,38){ {\Large $+$}}
\put(-240,30){ {\Large $-$}}
\end{picture}
\end{center}
\vskip-5mm
\captionof{figure}{The oriented double wedge $\dwedge_j^i$.} \label{fig:wedge}
\end{figure}

We denote the points at intersection of lines $i$ and $j$ by $\bullet_{i,j}$. 
The {\it double wedge} with its origin at point $\bullet_{i,j}$ can be characterised by

\[\dwedge^i_j:=H_i^+\cap H_j^-.\]

The situation is illustrated in Figure~\ref{fig:wedge}. There the points belong to the positive 
part of $\dwedge_j^i$ are marked green. Points in the red region geometrically correspond to
points in $\dwedge_j^i$ as well, but they should be considered equipped with a negative sign.
Circles whose centers are shown in  Figure~\ref{fig:gridc1} are in the double wedge 
$\dwedge^1_2$. Some are in the positive side, some in the negative side.

Notice that we have $\dwedge^i_j=-\dwedge^j_i$.
The absolute value of the difference of $i$ and $j$ indicates the ring in the Poncelet grid 
$\pts[{|i-j|}](L)$
to which the intersection $\bullet_{i,j}$ of the lines belongs. Observe that if we consider the two parts of a double wedge in the projective plane, they represent just one region.

A single circle can be precisely addressed by describing its relative position with respect to the four lines to which it is tangent. 
For instance, the circle marked by a white dot  in Figure~\ref{fig:gridc1} is characterised by being on the positive side of $1$ and on the negative side of $2$, and simultaneously 
being on the positive side of $6$ and on the negative side of $5$. 
We define a cell by intersecting two double wedges
\[
\raisebox{-.15em}{$\mybox^{a,b}_{c,d}$}
\ 
:=\ \dwedge^a_b \cap \dwedge^d_c 
\ =\ 
H_a^+\cap H_b^- \cap H_d^+\cap H_c^-.
\]
In particular, we get
\[
\mybox^{a,b}_{c,d}
\ =\ 
\mybox^{a,c}_{b,d}
\ =\ 
\mybox^{d,b}_{c,a}
\ =\ 
\mybox^{d,c}_{b,a}.
\]

The cell with the white circle center now becomes $\mybox^{1,2}_{5,6}$.
Notice that not all cells in our labelling system correspond to cells with circles that are shown in Figure~\ref{fig:gridc1}.
The cells shown with circles are those that have smallest gridwidth, and they are of the form $\mybox^{a,a+1}_{c,c+1}$. The $1$ occurring in this expression
encodes that the {\it combinatorial sidelength} is one unit.
Our system also allows us to address cells that are composed from more than one
 such elementary regions. In general, $\mybox^{a,b}_{c,d}$ describes a generalised rectangle.
A generalised {\it square}   with combinatorial sidelength $k$ has the form  
$\mybox^{a,a+k}_{c,c+k}=\mybox^{a,c}_{a+k,c+k}$.
Figure~\ref{fig:grid2} illustrates some combinatorial squares in a Poncelet grid.  The image also indicates that cells can extend via infinity---like the blue cell in the picture. 
The part that lies beyond infinity and comes back from the other side of the picture has to be considered negative. 

\begin{figure}[t]
\begin{center}
\includegraphics[width=0.75\textwidth]{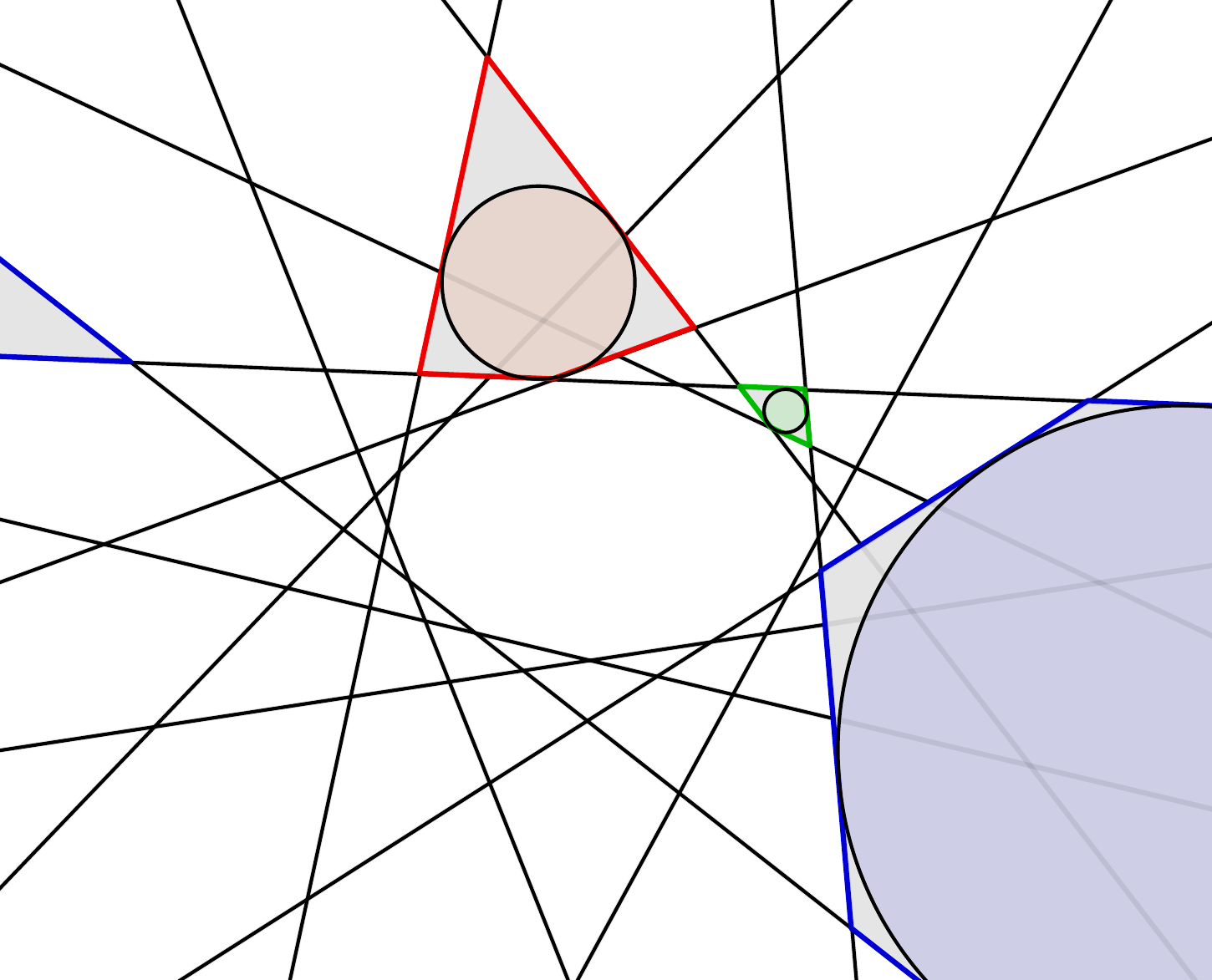}
\begin{picture}(0,0)
\put(-176,99){\footnotesize $1$}
\put(-170,108){\footnotesize $2$}
\put(-154,117){\footnotesize $3$}
\put(-132,119){\footnotesize $4$}
\put(-111,115){\footnotesize $5$}
\put(-98,105){\footnotesize $6$}
\put(-94,97){\footnotesize $7$}
\put(-96,88){\footnotesize $8$}
\put(-104,79){\footnotesize $9$}
\put(-127,72){\footnotesize $10$}
\put(-149,73){\footnotesize $11$}
\put(-165,79){\footnotesize $12$}
\put(-176,88){\footnotesize $13$}
\put(-156,148) {$\mybox^{1,3}_{4,6}$}
\put(-89,138){ $\mybox^{4,6}_{5,7}$}
\put(-89,129){\footnotesize $\swarrow$}
\put(-46,62){ $\mybox^{4,7}_{9,12}$}
\end{picture}
\end{center}
\vskip-5mm
\captionof{figure}{Various combinatorial squares in a Poncelet grid.} \label{fig:grid2}
\end{figure}

\noindent
A cell $\mybox^{a,b}_{c,d}$ has the corners $\bullet_{a,c}$, $\bullet_{b,c}$, $\bullet_{a,d}$ and $\bullet_{b,d}$. If the cell is a combinatorial square $\mybox^{a,b}_{a+k,b+k}$, then
the two corners $\bullet_{a,a+k}$ $\bullet_{b,b+k}$ come from the same ring of points in a Poncelet grid, namely the ring $\pts[k](L)$. This means that they lie on a common confocal conic. 
Hence, we can apply Theorem \ref{graves} and get an incircle for every square of any sidelength in the grid. The circles are also shown in Figure~\ref{fig:grid2}.
A little care is appropriate here. Four lines in the projective plane decompose the projective plane into 4 triangles and 3 quadrangles. 
The property of one of these quadrangles circumscribing a circle does not automatically imply that the other two quadrangles are circumscribing as well.
However, with our specification of the cell (that also takes the relative oriented position with respect to the lines into account) we exactly specify the cell in which an incircle exists. 
Taking all that together we get

\begin{lemma}
Each combinatorial square  $\mybox^{a,b}_{a+k,b+k}$ has an incircle.
\end{lemma}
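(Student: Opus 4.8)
The plan is to realise the cell $\mybox^{a,b}_{a+k,b+k}$ as the quadrilateral occurring in the Chasles--Graves Theorem (Theorem~\ref{graves}) attached to two of its own corners. First I would unpack the symbol. From the definition via oriented half-spaces, $\mybox^{a,b}_{a+k,b+k}$ is bounded by the four Poncelet tangents $l_a,l_b,l_{a+k},l_{b+k}$, all tangent to the inner conic $\mathcal{X}$, and two of its corners are $A:=\bullet_{a,a+k}$ and $B:=\bullet_{b,b+k}$, which are the pair whose index difference equals $k$. By Theorem~\ref{ponceletGrid1} both $A$ and $B$ lie on the single Poncelet-grid conic $\mathcal{C}_k$ of the ring $\pts[{k}](L)$; moreover $\mathcal{C}_k$ is codependent with $\mathcal{X}$, which, after the normalisation to confocal ellipses used throughout this section, means $\mathcal{C}_k$ and $\mathcal{X}$ are confocal.

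Next I would invoke Theorem~\ref{graves}(1) with this pair of points. Since $A$ is the intersection $l_a\wedge l_{a+k}$ of two lines tangent to $\mathcal{X}$, and $A$ lies strictly outside $\mathcal{X}$ in the nested picture, the two tangents from $A$ to $\mathcal{X}$ are exactly $l_a$ and $l_{a+k}$; likewise the two tangents from $B$ are $l_b$ and $l_{b+k}$. Hence the quadrilateral built in the Chasles--Graves Theorem from the tangent pairs at $A$ and $B$ to the confocal conic $\mathcal{X}$ is the one bounded by precisely $l_a,l_b,l_{a+k},l_{b+k}$, and the theorem asserts that it is circumscribed about a circle --- a genuine real circle in the nested-ellipse regime. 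It is important that we may apply Theorem~\ref{graves} here without the ambiguity exhibited in the Not--a--Theorem: the confocal partner of $\mathcal{X}$ through $A$ and $B$, namely $\mathcal{C}_k$, is handed to us directly by the Poncelet grid theorem rather than reconstructed from incidence data. If for the given $k$ the conic $\mathcal{C}_k$ happens to be a hyperbola of the confocal family rather than an ellipse, one runs the same argument after relabelling so that the chosen pair of corners lies on an ellipse; the conclusion is unaffected.

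The step I expect to be the main obstacle is the final identification. Four lines bound several regions of $\mathbb{RP}^2$, and being circumscribed about a circle does not pass automatically between them, so one must certify that the region which Theorem~\ref{graves} makes tangential is exactly the oriented cell $\mybox^{a,b}_{a+k,b+k}$ and not one of its companions. I would settle this with the oriented bookkeeping the $\mybox$ notation was designed for: orient every line so that the centre of $\mathcal{X}$ is a positive point, and check that the incircle produced by Theorem~\ref{graves} lies in $H_a^{+}\cap H_b^{-}\cap H_{b+k}^{+}\cap H_{a+k}^{-}$, with the usual sign convention for the portion of a cell running through the line at infinity (as for the blue cell in Figure~\ref{fig:grid2}). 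The cleanest way to organise this argument, and a good sanity check on the whole lemma, is the incircular-net viewpoint: the Poncelet-grid line arrangement forms a checkerboard incircular net in the sense of \cite{AkBo18}, and $\mybox^{a,b}_{a+k,b+k}$ is one of its tangential cells, so that the incircle is forced to sit inside this cell rather than one of the other regions cut out by the four tangents.
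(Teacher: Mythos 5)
Your proposal is correct and follows essentially the same route as the paper: identify the two corners $\bullet_{a,a+k}$ and $\bullet_{b,b+k}$ as lying on the common Poncelet-grid conic $\mathcal{C}_k$ confocal to $\mathcal{X}$, apply the Chasles--Graves Theorem (Theorem~\ref{graves}) to the tangent pairs from these two points, and use the oriented half-space specification of the cell to single out which of the regions cut out by the four lines actually carries the incircle. Your extra care about the CGT ambiguity and the incircular-net framing of \cite{AkBo18} matches the paper's own discussion surrounding the lemma.
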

 
If we without loss of generality assume $a<b$ and set  $l=b-a$, we may also represent our square by two shifts $l$ and $k$:
  \[
 \mybox^{a,b}_{a+k,b+k}
 \ = \ 
 \mybox^{a,a+l}_{a+k,a+k+l}.
\]
The two shifts $k$ and $l$ characterise the {\it type} $(k,l)$ of
a square cell. This type is intimately related to the rings in the Poncelet grid. The two pairs of opposite  sides of the cell meet in Poncelet grid points from the rings
$\pts[k](L)$ and $\pts[l](L)$. The index $a$ characterises the rotational position.
We have two ways to think about the cell of type  $(k,l)$. We may think of it as generated by intersecting two double wedges $\dwedge^a_{a+k}$ and  $\dwedge^{a+l+k}_{a+l}$
which have their apex in the ring $\pts[k](L)$. Or we can interchange the role of $k$ and $l$.  
The cell is also the intersection of double wedges $\dwedge^a_{a+l}$ and  $\dwedge^{a+l+k}_{a+k}$ which have their apex in the ring $\pts[l](L)$: therefore, a cell is associated with two rings. 
This essential fact is more or less a reformulation of Theorem~\ref{graves} and is reflected by the fact that $\mybox^{a,b}_{c,d}
\ =\ 
\mybox^{a,c}_{b,d}$.

We extend our notation   and use the symbols
 $\bigcirc^{a,b}_{c,d}$ for the incircle and
 $\odot^{a,b}_{c,d}$ for the incircle center of a square cell
 $\mybox^{a,b}_{c,d}$. We now generate configurations 
 from centers of incircles. For ease of notation
 we set ${}_{i}\odot_{k,l}:=\odot^{i,i+l}_{i+k,i+k+l}$.

\begin{theorem}
Let $L$ be the lines supporting the sides of a Poncelet $m$-gon and let 
$a,b,c$ be three positive, distinct indices smaller than $m/2$.
Then the collection of centers
\[\mathcal{P}:=
\left\{
{}_{i}\!\odot_{a,b}\ \big\vert \ 1\leq i\leq m
\right\}\cup
\left\{
{}_{i}\!\odot_{b,c}\ \big\vert \ 1\leq i\leq m
\right\}\cup
\left\{
{}_{i}\!\odot_{c,a}\ \big\vert \ 1\leq i\leq m
\right\}
\]
are the points of a pre-$(n_4)$ configuration.
\end{theorem}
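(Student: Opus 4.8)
The plan is first to name the $3m$ lines of the configuration — they are not given in the statement — and then to reduce both incidence counts to bookkeeping with the oriented double-wedge calculus set up above. The lines will be \emph{angle bisectors}: all circles tangent to two fixed lines $\ell_p,\ell_q$ of the Poncelet grid have their centres on the two bisectors of the pair $\{\ell_p,\ell_q\}$, and, on the oriented side singled out by the double wedge $\dwedge^p_q$, on exactly one of them. Keeping the standing normalisation of this subsection (the Poncelet polygon is supported by confocal ellipses, and $L=(\ell_1\upto\ell_m)$ is oriented so that the common centre is positive — the general case then follows by projective invariance of incidence), define, for $t\in\{a,b,c\}$ and $j\in\{1\upto m\}$, the line $B^t_j$ to be the bisector of $\{\ell_j,\ell_{j+t}\}$ lying on the side of the double wedge $\dwedge^j_{j+t}$. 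This gives $3m$ lines.

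\emph{Step 1 (locating each centre).} Fix a type $(k,l)$ and an index $i$. The cell $\mybox^{i,i+l}_{i+k,i+k+l}$ carries an incircle by the preceding lemma, and ${}_i\odot_{k,l}$ is its incentre, hence equidistant from the four boundary lines $\ell_i,\ell_{i+l},\ell_{i+k},\ell_{i+k+l}$. Reading off $\mybox^{a,b}_{c,d}=H_a^+\cap H_b^-\cap H_d^+\cap H_c^-$, the cell is contained in the two ``$k$-wedges'' $\dwedge^i_{i+k}$ and $-\dwedge^{i+l}_{i+k+l}$ and in the two ``$l$-wedges'' $\dwedge^i_{i+l}$ and $-\dwedge^{i+k}_{i+k+l}$; so its incentre, an interior point of the cell, lies on $B^k_i,\;B^k_{i+l},\;B^l_i,\;B^l_{i+k}$. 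Specialising to the three types occurring in $\mathcal P$: ${}_i\odot_{a,b}$ lies on $B^a_i,B^a_{i+b},B^b_i,B^b_{i+a}$; ${}_i\odot_{b,c}$ lies on $B^b_i,B^b_{i+c},B^c_i,B^c_{i+b}$; ${}_i\odot_{c,a}$ lies on $B^c_i,B^c_{i+a},B^a_i,B^a_{i+c}$. Thus every one of the $3m$ points of $\mathcal P$ lies on (at least) four of the lines $B^t_j$.

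\emph{Step 2 (counting points on a line).} Conversely, reading the three incidence lists of Step 1 in the other direction, the centres of $\mathcal P$ incident to a fixed $B^a_j$ are exactly ${}_j\odot_{a,b}$ and ${}_{j-b}\odot_{a,b}$ (type $(a,b)$) together with ${}_j\odot_{c,a}$ and ${}_{j-c}\odot_{c,a}$ (type $(c,a)$); the type $(b,c)$ centres produce no $a$-bisector. Since $0<b,c<m/2$ we have $b,c\not\equiv 0\ (\mathrm{mod}\ m)$, and since $a,b,c$ are distinct the three families of cells — hence of incentres — are distinct, so these are four distinct points of $\mathcal P$. The identical argument applies to $B^b_j$ and to $B^c_j$, so every one of the $3m$ lines carries (at least) four points of $\mathcal P$. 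With $3m$ points, $3m$ lines and the $(\ge 4,\ge 4)$ incidence just established, $\mathcal P$ together with the $B^t_j$ is a pre-$(n_4)$ configuration. (As it must, the symbol $m\#(a,b;c,a;b,c)$ reappears: by the reflection property of confocal conics the four tangents meeting in a point in Lemma~\ref{corelemma} are exactly the angle bisectors used here, so these incentres are precisely the points produced in Theorem~\ref{constrthm}.)

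\emph{Main obstacle.} Everything combinatorial in Steps 1--2 is routine once the \emph{single} line $B^t_j$ is correctly attached to the unordered pair $\{\ell_j,\ell_{j+t}\}$; the real content is that all four incircles that ought to pass through $B^t_j$ really do so — i.e.\ that each relevant incentre lands on \emph{that} bisector, not on the perpendicular one, and not degenerately at the apex $\bullet_{j,j+t}$. This is exactly the subtlety the oriented-halfspace notation was introduced to manage, and it rests on Theorem~\ref{graves}: that theorem guarantees the incircle of each combinatorial square exists and is genuinely inscribed, so its centre is an interior point of the corresponding double wedge, and since a double wedge together with its antipode picks out exactly one of the two bisectors of a crossing pair, the incentre is forced onto the claimed line $B^t_j$; the collinearity of the (up to) four such incentres on $B^t_j$ is then automatic, as they are all equidistant from $\ell_j$ and $\ell_{j+t}$ and all lie in $\dwedge^j_{j+t}\cup(-\dwedge^j_{j+t})$.
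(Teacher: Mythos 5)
Your proposal is correct and follows essentially the same route as the paper: the $3m$ lines are the angle bisectors carrying the centres of all incircles tangent to a fixed pair of grid lines, the incircle lemma for combinatorial squares supplies the circles, and the index bookkeeping (four bisectors through ${}_i\odot_{a,b}$, and the four centres ${}_j\odot_{a,b},{}_{j-b}\odot_{a,b},{}_j\odot_{c,a},{}_{j-c}\odot_{c,a}$ on $B^a_j$) matches the paper's exactly. Your only addition is a more explicit justification, via the antipodal double-wedge, of why all relevant incentres land on the \emph{same} bisector of a crossing pair — a point the paper treats as an observation from Figure 13.
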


\begin{figure}[h]
\begin{center}
\includegraphics[width=0.45\textwidth]{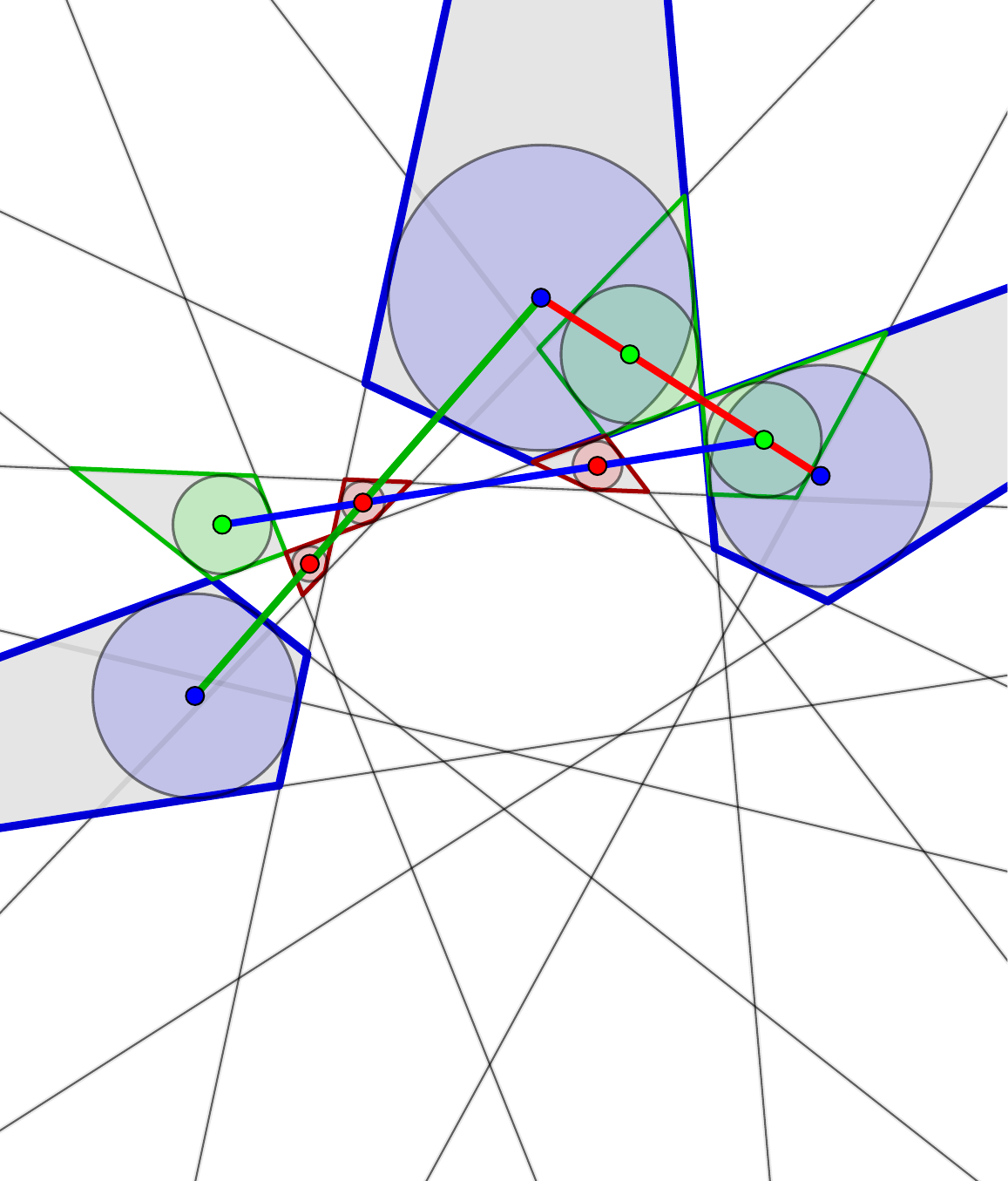}\hfill
\includegraphics[width=0.45\textwidth]{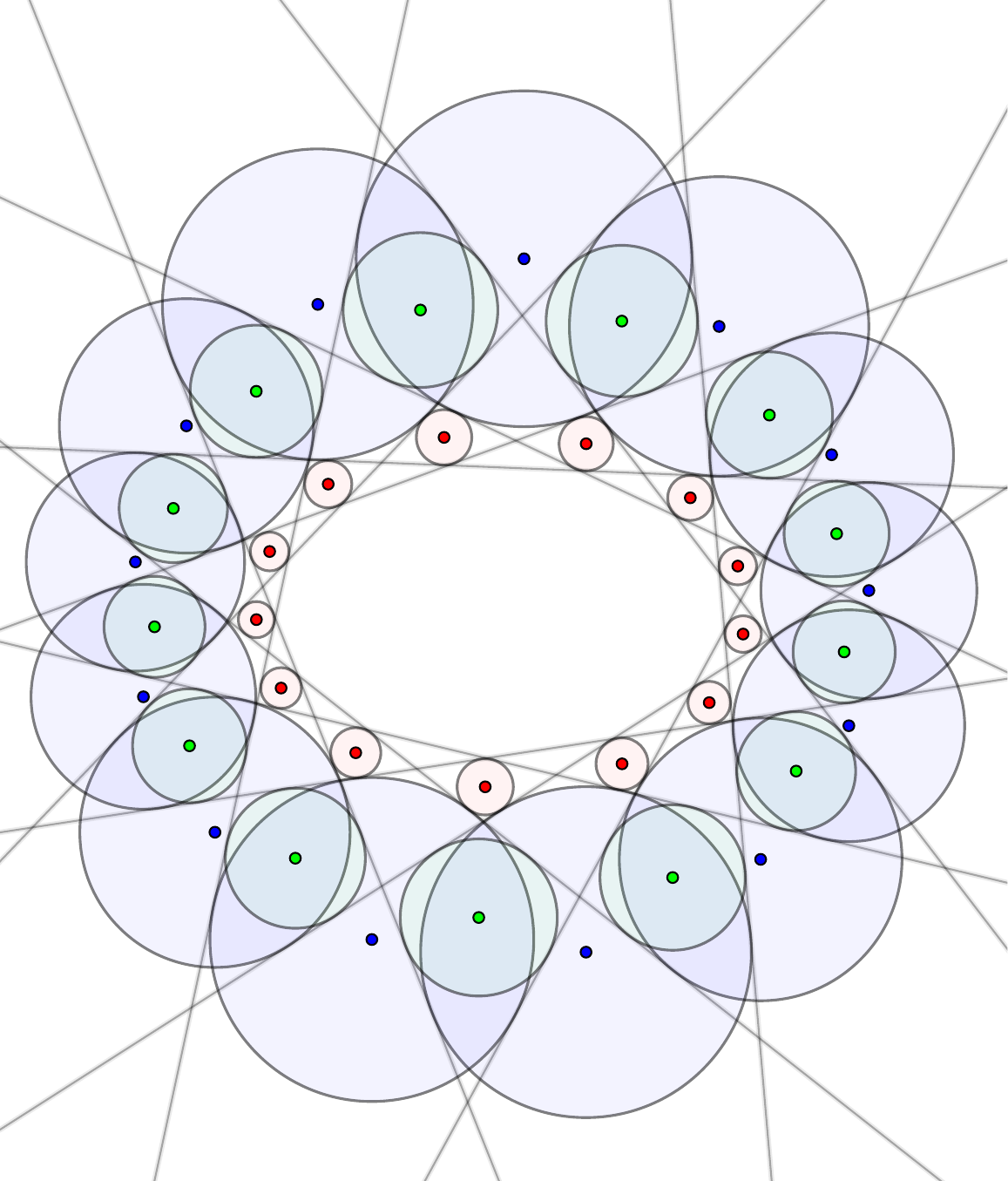}
\begin{picture}(0,0)
\end{picture}
\end{center}
\vskip-5mm
\captionof{figure}{Left: some collinearities between centers of points. Right: all centers belonging to an $(n_4)$ configuration.} \label{fig:grid}
\end{figure}

\begin{figure}[h]
\begin{center}
\includegraphics[width=.85\textwidth]{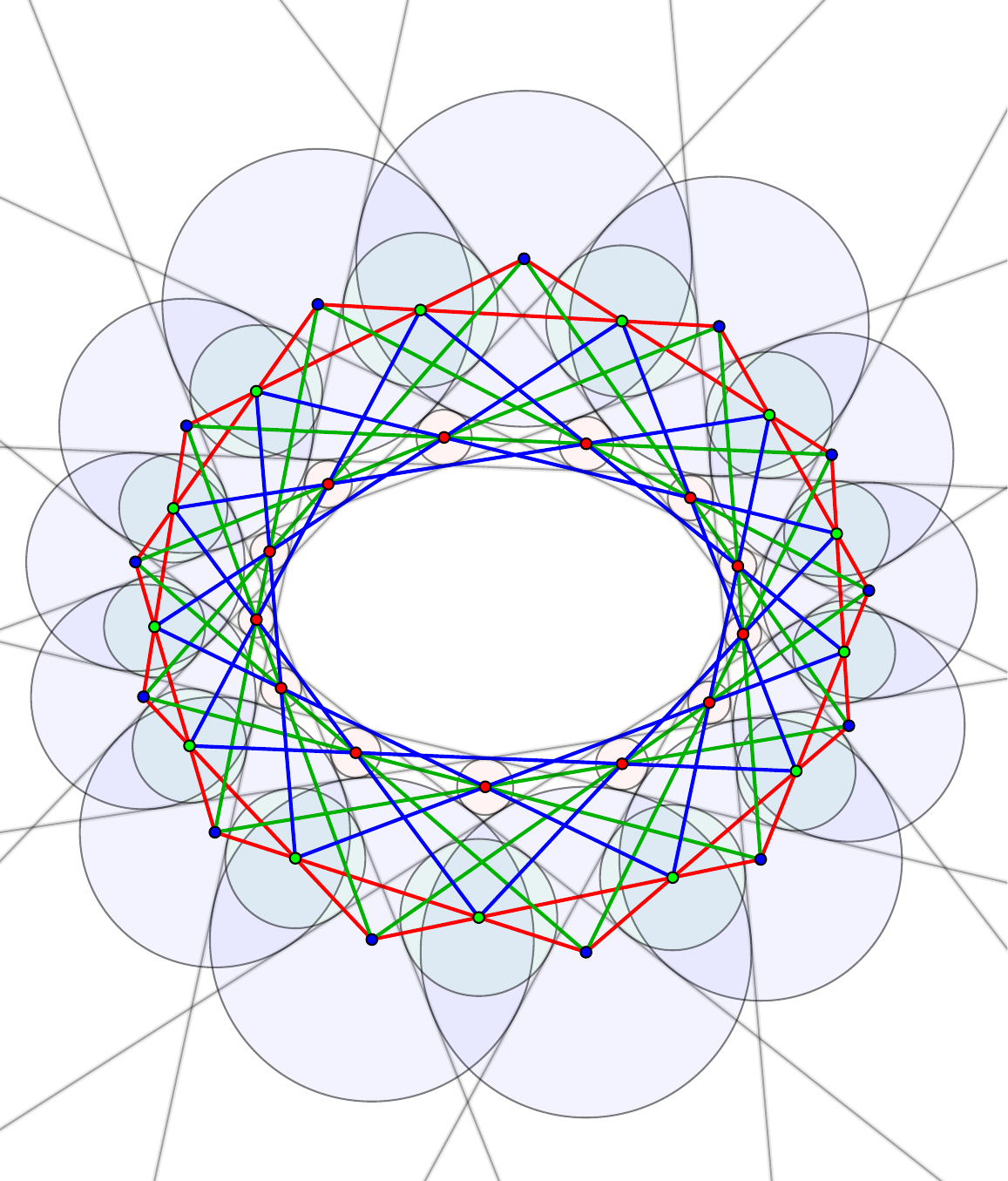}
\begin{picture}(0,0)
\end{picture}
\end{center}
\vskip-5mm
\captionof{figure}{The full relation between incircle nets and the construction for $(n_4)$ configurations.}\label{fig:13incircle}
\end{figure}

\begin{proof}
With everything we have already said, the proof is fairly easy. We have to show that each point is contained in 4 lines that contain 4 points each.
Fix $a,b,c$ as well as $i$ as in the Theorem.
Since all points in $\mathcal{P}$ are defined in the same way, it suffices to show the existence of the four lines for one of the points
$p=:{}_{i}\odot_{a,b}$.  It corresponds to the cell
$\mybox^{{\color{red}i},i+b}_{{\color{red}i+a},i+a+b}$.
Consider the points 
\[
{}_{i-b}\odot_{a,b}=\odot^{{\color{black}i-b},{\color{red}i}}_{{\color{black}i-b+a},{\color{red}i+a}};\qquad
{}_{i}\odot_{c,a}=\odot^{{\color{red}i},{\color{red}i+a}}_{{\color{black}i+c},{\color{black}i+a+c}};\qquad
{}_{i-c}\odot_{c,a}=\odot^{{\color{black}i-c},{\color{black}i-c+a}}_{{\color{red}i},{\color{red}i+a}}.
\]
Those indices marked in red show that  each of the corresponding circles is
tangent to the lines $i$ and $i+a$. Thus their centers and point $p$  are four collinear points.
All the points we considered are in the collection $\mathcal{P}$ given in the theorem.
In exactly the same way it can be shown that there are three more four-point lines:
one defined by the pair of lines $\{i,i+b\}$, one defined by $\{i+a,i+a+b\}$
and one defined by  $\{i+b,i+a+b\}$.
\end{proof}

\bigskip

\newpage
\subsubsection{Billiard geometry and local coordinate systems}

\rightline{\small \it Mathematics is the art of giving the same name to different things.}
\vskip1mm
\rightline{\small \it Poincaré}

\medskip

In this section,  we take a different perspective and relate billiard trajectories in ellipses and their relationship to Poncelet polygons to our $\wedge$ and $\vee$ constructions.
We first recall a few basic concepts. For a detailed exposition, see \cite{Ta05}. Consider an elliptic billiard table and draw the trajectory of a ball. 
The ball is reflected whenever it hits the cushion by the optical reflection law (so we do not do trickshots). Figure~\ref{fig:billiard} shows an infinite and a closed trajectory. 
It turns out that all segments of a trajectory are tangent to a single conic, a caustic, that is confocal to the billiard table boundary.
So if a trajectory returns to its initial starting conditions (position and direction), we actually create a Poncelet polygon. 
This relation between Poncelet chains and billiard trajectories can be exploited in various ways.

\begin{figure}[h]
\begin{center}
\includegraphics[width=0.45\textwidth]{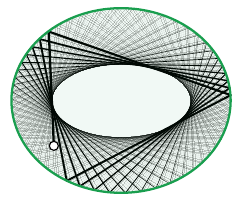}\hfill
\includegraphics[width=0.45\textwidth]{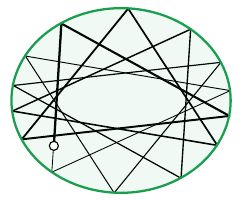}
\begin{picture}(0,0)
\end{picture}
\end{center}
\vskip-5mm
\captionof{figure}{Billiard on an elliptic table. An open and a closed trajectory.} \label{fig:billiard}
\end{figure}

The concept of billiards on elliptic tables is tightly interwoven with our
elaborations on Poncelet's Porism and $(n_4)$ configurations. 
Our Theorem A states that if we start with an arbitrary  Poncelet polygon $P$ and 
construct
$\pts[c]\cdot\lns[b]\cdot
\pts[a]\cdot\lns[c]\cdot
\pts[b]\cdot\lns[a](P)=P'$ 
we end up with $P$ again.
In a sense, this statement can be decomposed in two parts.
\begin{itemize}
\item[1.] The  points $P'$ are on the same conic as the one that supports $P$, and
\item[2.] on that conic they end up at the same position as the original points.
\end{itemize}
The billiard approach can be used to derive the second part relatively easily by introducing a certain distorted coordinate system that comes from the specific initial Poncelet polygon. In what follows we will sketch this line of reasoning.

\begin{figure}[t]
\begin{center}
\includegraphics[width=0.45\textwidth]{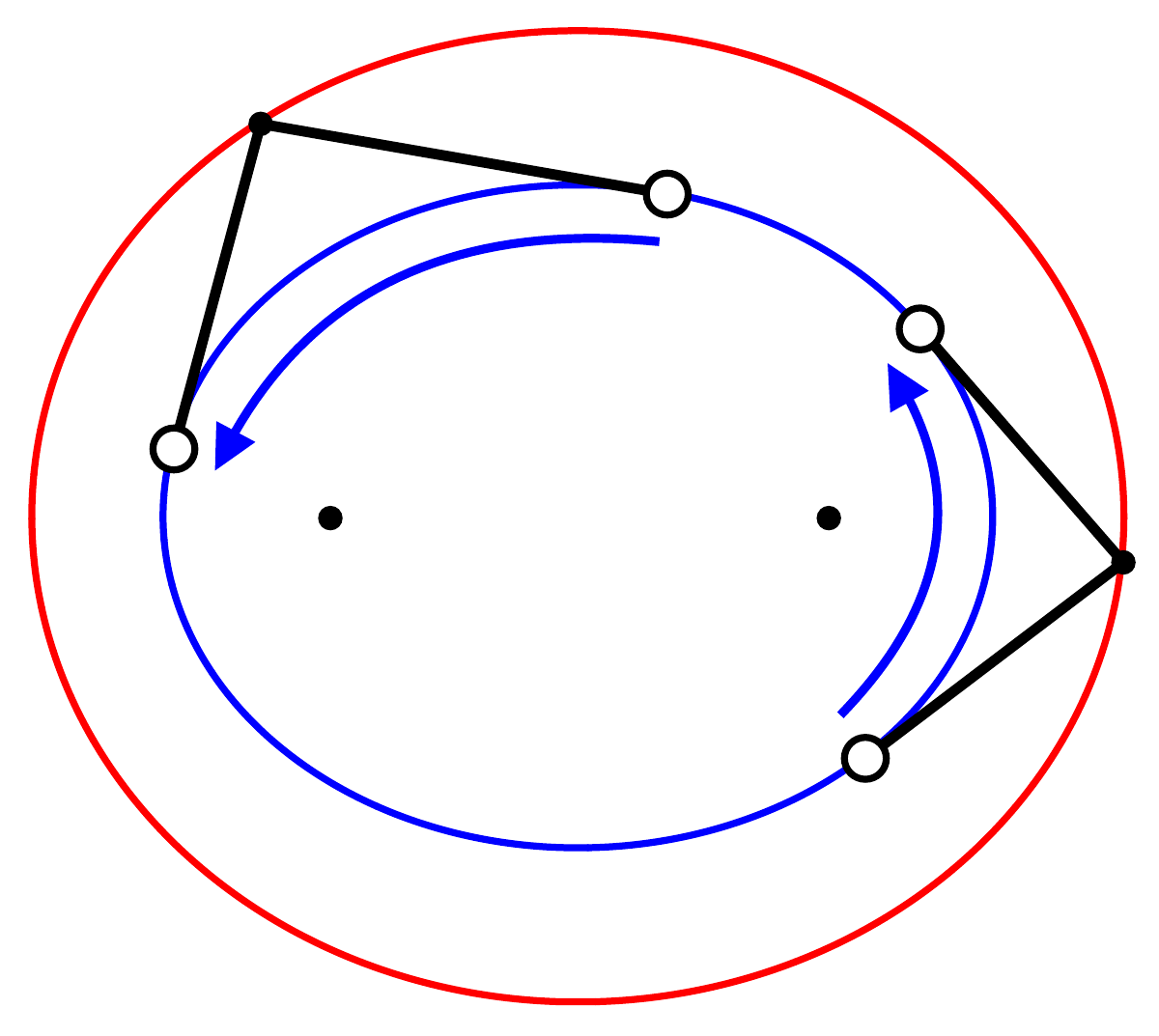}\hfill
\includegraphics[width=0.45\textwidth]{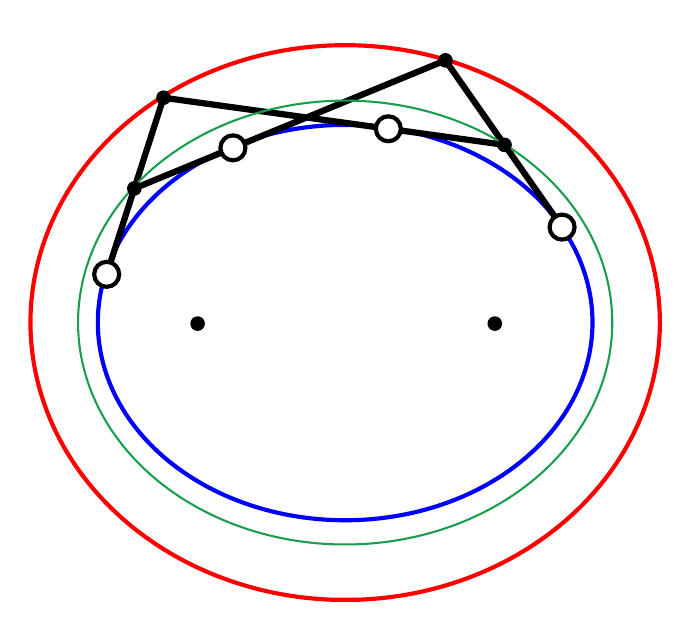}
\begin{picture}(0,0)
\put(-255,120){\footnotesize $P$}
\put(-335,75){\footnotesize $Q$}
\put(-225,30){\footnotesize $P'$}
\put(-220,100){\footnotesize $Q'$}
\put(-228,76){\footnotesize $c$}
\put(-309,80){\footnotesize $c$}
\put(-313,30){\footnotesize $\mathcal{X}$}
\put(-323,13){\footnotesize $\mathcal{C}$}

\put(-44,85){\footnotesize $P$}
\put(-105,102){\footnotesize $Q$}
\put(-72,104){\footnotesize $Q'$}
\put(-130,82){\footnotesize $R$}

\end{picture}
\end{center}
\vskip-5mm
\captionof{figure}{Shifts on an elliptic boundary. The left picture shows the same shift represented by two pairs of points. 
                          The picture on the right illustrates commutativity. 
                          We have two shifts $c=[P,Q]=[Q',R]$ and
                          $d=[P,Q']=[Q,R]$. It holds $c+d=[P,Q]+[Q,R]=[P,R]=[P,Q']+[Q',R]=d+c$.} \label{fig:shift}
\end{figure}

\medskip
We restrict ourselves to classes of confocal ellipses. 
We assume that the ellipses are equipped with a counterclockwise orientation. For the moment, we fix the two foci $f_1$ and $f_2$,
and only consider Poncelet $m$-gons whose inscribing and circumscribing ellipses have these focal points. 
Assume that the conic $\mathcal{X}$  to which the segments of the Poncelet polygon are tangent is given and fixed,
 and that the conic~$\mathcal{C}$ on which the points lie  is variable (but still with foci $f_1$ and $f_2$).
The possible ellipses  $\mathcal{C}$ form a one-parameter family.

Assume that an ordered pair of points $(P,Q)$ on $\mathcal{X}$ is given.
The tangents at these points do have an intersection that uniquely defines a corresponding ellipse~$\mathcal{C}$. 
We equip this conic with a sign depending on the angle between $P$ and $Q$ as
seen from the center of the conic. The sign is positive if the angle is less than $\pi$, zero if the angle is $0$ or $\pi$ and negative otherwise.
We consider another pair of points $(P',Q')$ to be equivalent to $(P,Q)$  if it defines the same signed conic $\mathcal{C}$. 
We call an equivalence class  $[P,Q]$ that arises that way a {\it shift}. 
We can define an {\it addition} on shifts by juxtaposition: $[P,Q]+[Q,R]=[P,R]$.
We can consider shifts as a kind of rotation along the conic by a certain amount (similar to rotations along a circle). 
However, the measurement is distorted by the specific geometry of the conic. Addition of shifts turns out to be commutative. 
This is a consequence of the incidence Theorem sketched in Figure~\ref{fig:shift} on the right. 
All in all, the shifts around an ellipse form a commutative group similar to rotations on the boundary of a circle. 
The neutral element is shift $[P,P]$ and the inverse of $[P,Q]$ is $[Q,P]$.

Shifts operate on the points of $\mathcal{X}$. If $c$ is a shift and $P\in\mathcal{X}$, 
then there is a unique point $Q\in\mathcal{X}$ such that $[P,Q]=c$. The operation of the shifts on $P$ is defined by 
$[P,Q]\circ P=Q$.
If we fix a starting point $P_0$ on the conic $\mathcal{X}$ and a shift $c$,
the sequence $P_{i+1}=c\circ P_i$ defines a Poncelet chain. It is completely determined by $P$ and $c$.
Since by Poncelet's Porism the closing of the chain is not dependent on the starting point, the property of closing after $m$ steps is entirely determined by $c$.
Thus for a given $m$ and $\mathcal{X}$, a Poncelet $m$-gon with all vertices in cyclic order occurs only for one specific choice of $c$.

\begin{figure}[t]
\begin{center}
\includegraphics[width=.9\textwidth]{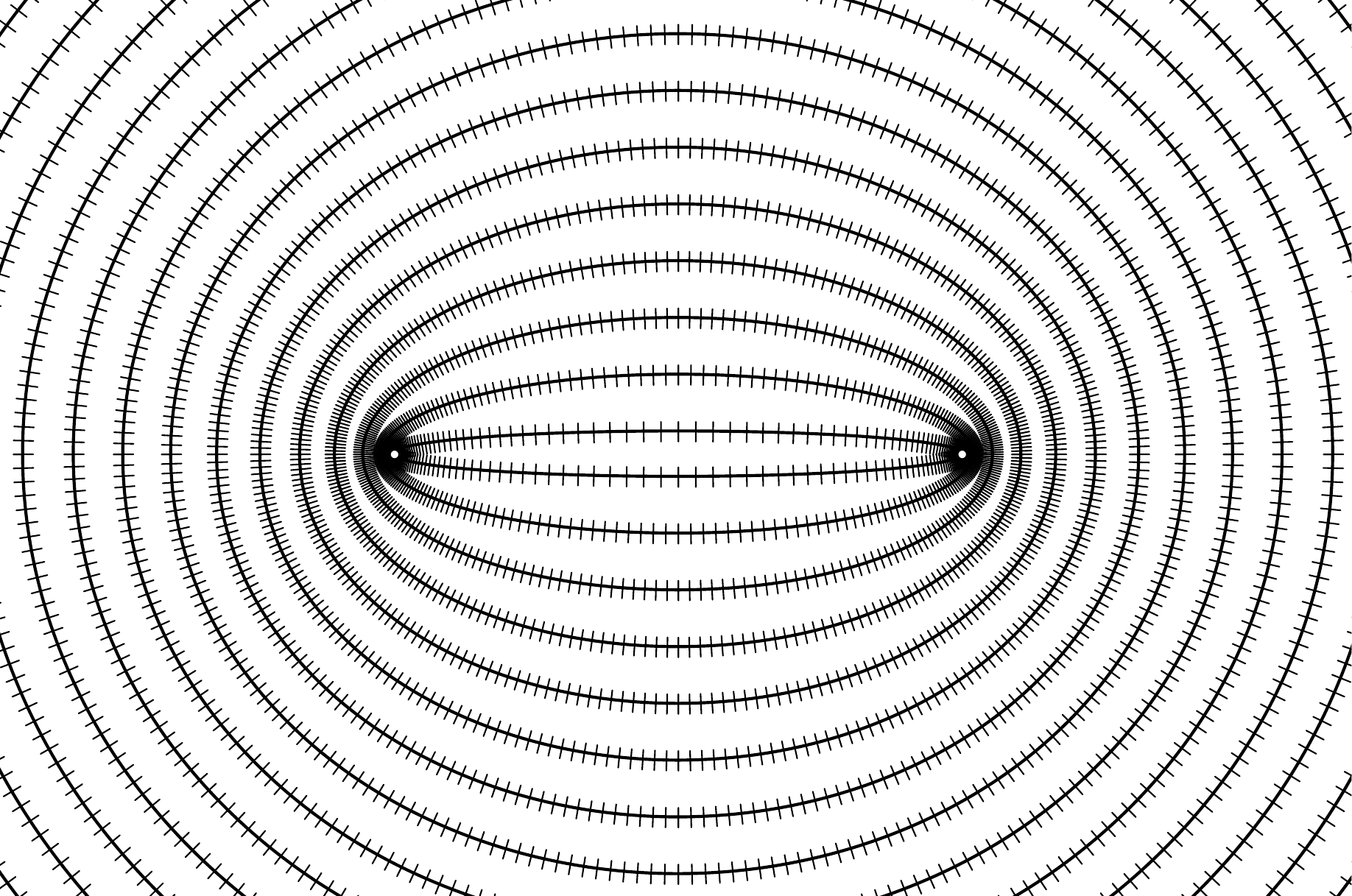}
\begin{picture}(0,0)
\end{picture}
\end{center}
\vskip-5mm
\captionof{figure}{The density of the Poncelet measurements along various confocal conics.}\label{fig:measure}
\end{figure}

In \cite{Iz15,LaTa07}
it is proved that there is an isomorphism $\phi$ 
between the shifts on $\mathcal{X}$ and the half open interval $[0,1)$ (with 
$\phi(c_1+c_2)=\phi(c_1)+\phi(c_2)\  \mathrm{mod} \ 1$.) 
We can consider this isomorphism as a normalised measurement of distances along the conic. 
Figure~\ref{fig:measure} shows a collection of confocal ellipses subdivided into 128 equal steps with respect to this measure. 
Observe that the curvature causes compression of this measure.

\noindent 
Taking everything together shifts can be represented in four different ways:
\begin{itemize}
\item by ordered pairs $(P,Q)$,\\[-5mm]
\item as the corresponding signed conic $\mathcal{C}$,\\[-5mm]
\item as a number $0\leq c< 1$,\\[-5mm]
\item by a Poncelet chain, modulo a starting point.
\end{itemize}

Poncelet $m$-gons with points cyclically ordered arise if a shift subdivides $[0,1)$ into $m$ equal parts. 
This happens for $\phi(c)=1/m$. If appropriate, we may in the case of Poncelet polygons renormalise our measure again by multiplying it with $m$ and calculate modulo $m$. 
Then the points of a Poncelet polygon appear at places $x+i$ for some integer $i$ and some real number $x$.

\medskip
Now we relate this point of view to our considerations about constructions and configurations from the previous sections. For that we have to relate shift measures of different confocal ellipses.
Consider the following incidence Lemma whose geometric situation is depicted in Figure  \ref{fig:doublinginci}.

\begin{figure}[t]
\begin{center}
\includegraphics[width=.45\textwidth]{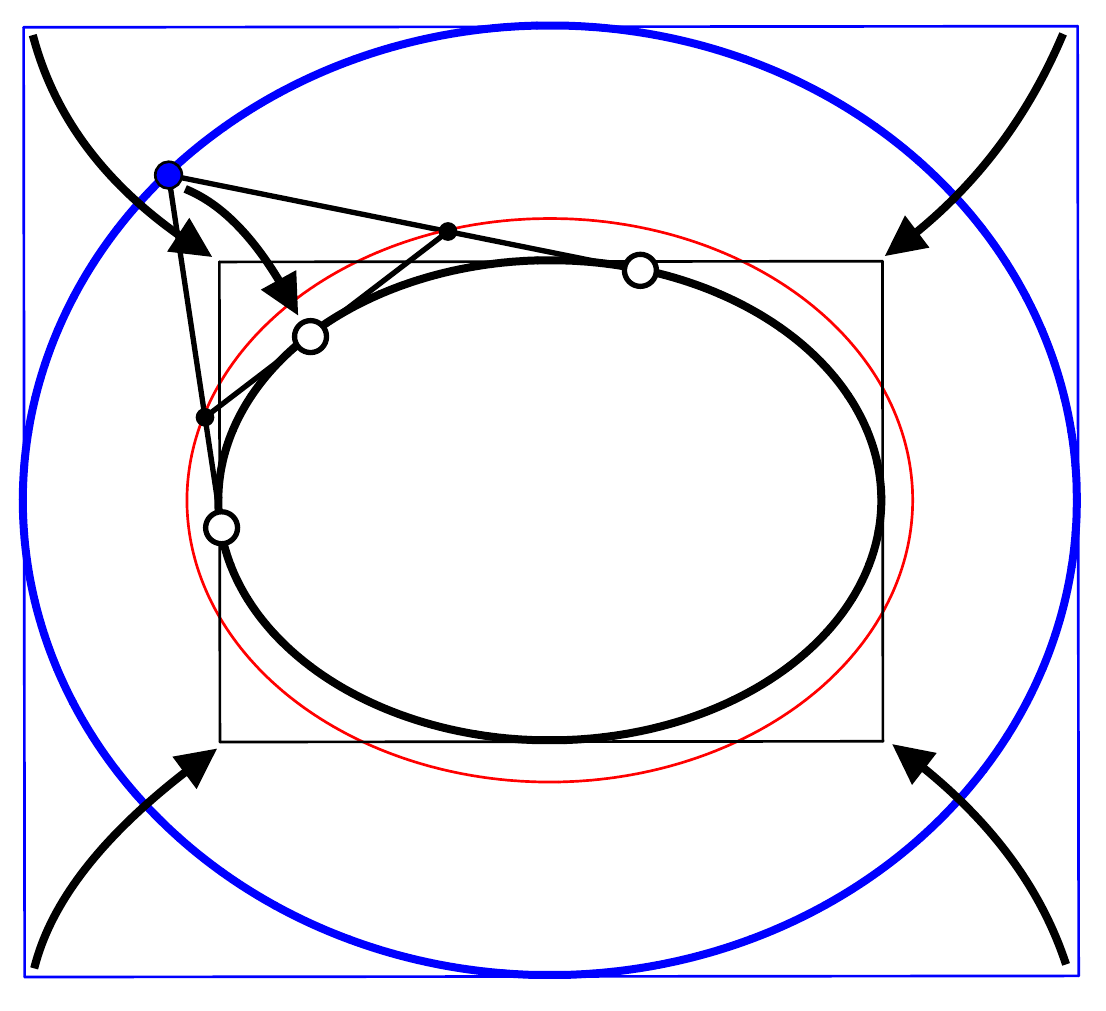}\hfill
\includegraphics[width=.5\textwidth]{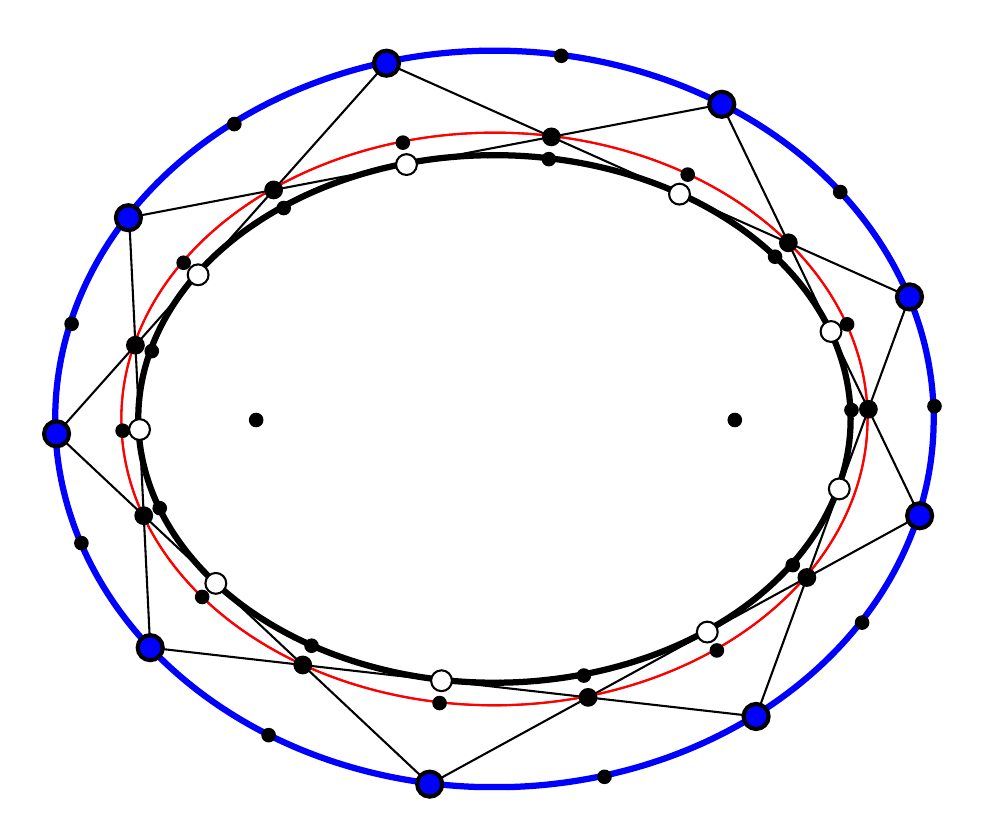}
\begin{picture}(0,0)
\put(-333,125){\footnotesize $\tau$}
\put(-208,125){\footnotesize $\tau$}
\put(-333,12){\footnotesize $\tau$}
\put(-208,12){\footnotesize $\tau$}
\put(-308,108){\footnotesize $\tau$}
\put(-260,92){\footnotesize $P$}
\put(-310,62){\footnotesize $Q$}
\put(-300,87){\footnotesize $R$}
\end{picture}
\end{center}
\vskip-5mm
\captionof{figure}{
Left: Halving a shift can be represented by a transformation that maps one conic to the other. The intersection of the tangents is mapped to the halving point. Right: repeated occurrence of this situation in the $\pts[a](L)$ operation.}\label{fig:doublinginci}
\end{figure}

\begin{lemma}\label{doublinglem}
Let $\mathcal{X}$ be a conic, let $c$ be a shift and let $\mathcal{C}$ and $\mathcal{B}$ be the confocal conics associated with the shifts $c$ and $c/2$, respectively. 
Assume that the conics are located symmetric to the $xy$-axes. Let  the shifts be represented by $[P,Q]=c$ and  $[P,R]=c/2$. Let $l_P$ and $l_Q$ be the tangents
to $\mathcal{X}$ at $P$ and $Q$. Let $\tau$ be a transformation that only scales in the $x$ and $y$ direction and maps $\mathcal{C}$  to  $\mathcal{X}$. 
Then $\tau$ maps the intersection of the tangents $l_P$ and $l_Q$ to $R$.
\end{lemma}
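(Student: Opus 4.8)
The plan is to reduce the statement to an identity about the \emph{shift coordinate} on $\mathcal{X}$, and then verify that identity by an elliptic‑function computation. Recall from \cite{Iz15,LaTa07} — this is precisely the content of the isomorphism $\phi$ introduced above — that a fixed confocal ellipse $\mathcal{X}$ carries a parameter $u\in\mathbb{R}/L\mathbb{Z}$, the billiard uniformizer, in which every shift acts as a translation $u\mapsto u+\mathrm{const}$; equivalently, for $P,Q\in\mathcal{X}$ the shift $[P,Q]$ — hence the confocal conic through $l_P\wedge l_Q$ — depends only on $u(Q)-u(P)$, and $\phi$ matches $[P,Q]$ with $(u(Q)-u(P))/L$. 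In these terms the point $R$ of the statement is simply the point of $\mathcal{X}$ with $u(R)=\tfrac12\bigl(u(P)+u(Q)\bigr)$, taken with the orientation that makes $[P,R]$ the intended square root of $c$. Now $\tau(l_P\wedge l_Q)\in\mathcal{X}$ is automatic, since $\tau(\mathcal{C})=\mathcal{X}$ and $l_P\wedge l_Q\in\mathcal{C}$; and the shift action of $\mathcal{X}$ on itself is free. Hence it suffices to prove the single equality
\[
\bigl[\,P,\ \tau(l_P\wedge l_Q)\,\bigr]\ =\ \bigl[\,\tau(l_P\wedge l_Q),\ Q\,\bigr],
\]
i.e.\ that $\tau(l_P\wedge l_Q)$ bisects the shift from $P$ to $Q$: doubling then forces $[P,\tau(l_P\wedge l_Q)]$ to be a square root of $c$, the orientation selects $c/2$, and freeness identifies $\tau(l_P\wedge l_Q)$ with $R$.

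For the bisection identity I would compute in coordinates adapted to the confocal family. Normalize so that the family is $x^2/a^2+y^2/(a^2-e^2)=1$ and write a point of $\mathcal{X}$ as $P(u)$ in the Jacobi parametrization for which the Poncelet step is $u\mapsto u+\mathrm{const}$. Then: (i) write down the tangent line $l_{P(u)}$ and solve for $T=l_{P(u_1)}\wedge l_{P(u_2)}$; its coordinates come out as symmetric rational expressions in $\mathrm{sn},\mathrm{cn},\mathrm{dn}$ of $u_1$ and $u_2$; (ii) read off from $|T|$ which confocal conic $\mathcal{C}$ contains it, and hence the two positive semi‑axis ratios that define the axis scaling $\tau$ sending $\mathcal{C}$ to $\mathcal{X}$; (iii) apply $\tau$ to $T$ and, using the addition theorem for Jacobi elliptic functions, check that the result equals $P\!\left(\tfrac{u_1+u_2}{2}\right)$. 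A more synthetic way to organise (i) is the focus–reflection trick: $l_{P}$ is the perpendicular bisector of the segment joining a focus $f_1$ to its mirror image $f_1^{P}$ in $l_P$, and $f_1^{P}$ runs over the circle $\Gamma$ centred at the other focus with radius the major axis of $\mathcal{X}$; hence $T=l_P\wedge l_Q$ is the circumcentre of the triangle $f_1\,f_1^{P}\,f_1^{Q}$, which is the geometric content of the point and a convenient picture for carrying out (ii)–(iii). (The Chasles–Graves Theorem~\ref{graves} is visibly in the neighbourhood, but by itself it does not deliver the scaling statement, for the reasons discussed above.)

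The main obstacle is step (iii): putting $T$ into a form to which the elliptic addition formula applies and then seeing the cancellation that collapses $\tau(T)$ to the half‑argument point. A second, genuine difficulty — of exactly the flavour already flagged in the discussion of the flawed CGT — is the sign/branch bookkeeping: the ``signed conic'' attached to a shift and the choice of which square root of $c$ is meant must be tracked so that $\tau$, which is determined by \emph{positive} semi‑axis ratios, is matched with the correct orientation of $R$; getting this wrong produces $P\!\left(\tfrac{u_1+u_2}{2}+\tfrac{L}{2}\right)$ instead of $R$. The one ingredient I would cite rather than reprove is the additivity of shifts / the isomorphism $\phi$, i.e.\ that the Poncelet step on caustic $\mathcal{X}$ between two confocal ellipses is a translation in the $u$‑coordinate; with that in hand the reduction in the first paragraph is immediate.
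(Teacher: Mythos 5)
The paper does not actually prove this lemma: it explicitly declines to do so and defers to \cite{Iz15,LaTa07}, where the statement is said to be contained implicitly. So there is no in-paper argument to measure you against; the relevant comparison is with those sources, and your outline is indeed consistent with how they operate — pass to the billiard uniformizer $u$ on $\mathcal{X}$ in which every shift acts as a translation, and show that the axis scaling $\tau$ carries $l_P\wedge l_Q$ to the half-argument point. Your auxiliary observations (that $\tau(l_P\wedge l_Q)\in\mathcal{X}$ is automatic, that the only ambiguity is a half-period branch, and that $l_P\wedge l_Q$ is the circumcentre of $f_1\,f_1^{P}\,f_1^{Q}$) are all correct.

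However, as submitted your text is a plan rather than a proof, and the gap is exactly where the lemma lives. The ``reduction'' in your first paragraph to $\bigl[P,\tau(l_P\wedge l_Q)\bigr]=\bigl[\tau(l_P\wedge l_Q),Q\bigr]$ is only a symmetric restatement of the conclusion in the $u$-coordinate; it discharges none of the content, because the entire assertion of the lemma is that the \emph{specific Euclidean} scaling $\tau$ determined by the semi-axes of $\mathcal{C}$ and $\mathcal{X}$ — as opposed to some other map of $\mathcal{C}$ onto $\mathcal{X}$ — lands on the bisecting point $P\!\left(\tfrac{u_1+u_2}{2}\right)$ rather than elsewhere on $\mathcal{X}$. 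That is precisely your step (iii), which you describe but explicitly leave undone, calling it ``the main obstacle,'' together with the branch bookkeeping you also flag but do not resolve. Until that computation (Jacobi addition formulas, or an equivalent synthetic argument) is actually carried out, nothing beyond the trivial inclusion $\tau(l_P\wedge l_Q)\in\mathcal{X}$ has been established. Either complete step (iii), or do what the paper does and cite \cite{Iz15,LaTa07} for the whole lemma rather than only for the additivity of shifts.
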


We will not prove this lemma here, since we only sketch the main lines of thought. A proof can be found implicitly in  \cite{Iz15,LaTa07}. 
The situation is relevant for our construction of $(n_4)$ configurations, and helps to relate the positions of the rings of lines and rings of points to each other. 
Consider Figure~\ref{fig:doublinginci} on the right. It shows two rings of points in our construction of the main theorem together with the corresponding lines that connect them. 
The two rings lie on two conics $\mathcal{B}$ and $\mathcal{C}$, and the lines are tangent to a conic $\mathcal{X}$.
We represent the lines by the respective touching points on  $\mathcal{X}$. 
By the Poncelet Grid Theorem, each ring of points forms a Poncelet $m$-gon. Without loss of generality, we may consider all three conics to be confocal. The points on $\mathcal{C}$ form a Poncelet $m$-gon 
that are generated by a shift $c$. Although the other rings of points lie on confocal conics, they are not generated by a uniform shift along these conics. However, by mapping them to the inner conic, 
we can relate them to a shift, as we will see next.

The situation repeatedly  contains the geometric situation of Lemma \ref{doublinglem}. By the projective transformation induced by Lemma \ref{doublinglem} we may represent 
the points of one of the outer rings by the mapped points on the inner ring on $\mathcal{X}$. All potential points on $\mathcal{X}$ are generated by the iterated shift $c/2$. 
Each ring of points is either mapped to the original points $x+i\cdot c$, or to the points shifted by $1/2$ related to the shifts $x+i\cdot c+{1\over 2}$. 

\medskip

\begin{figure}[t]
\begin{center}
\includegraphics[width=.5\textwidth]{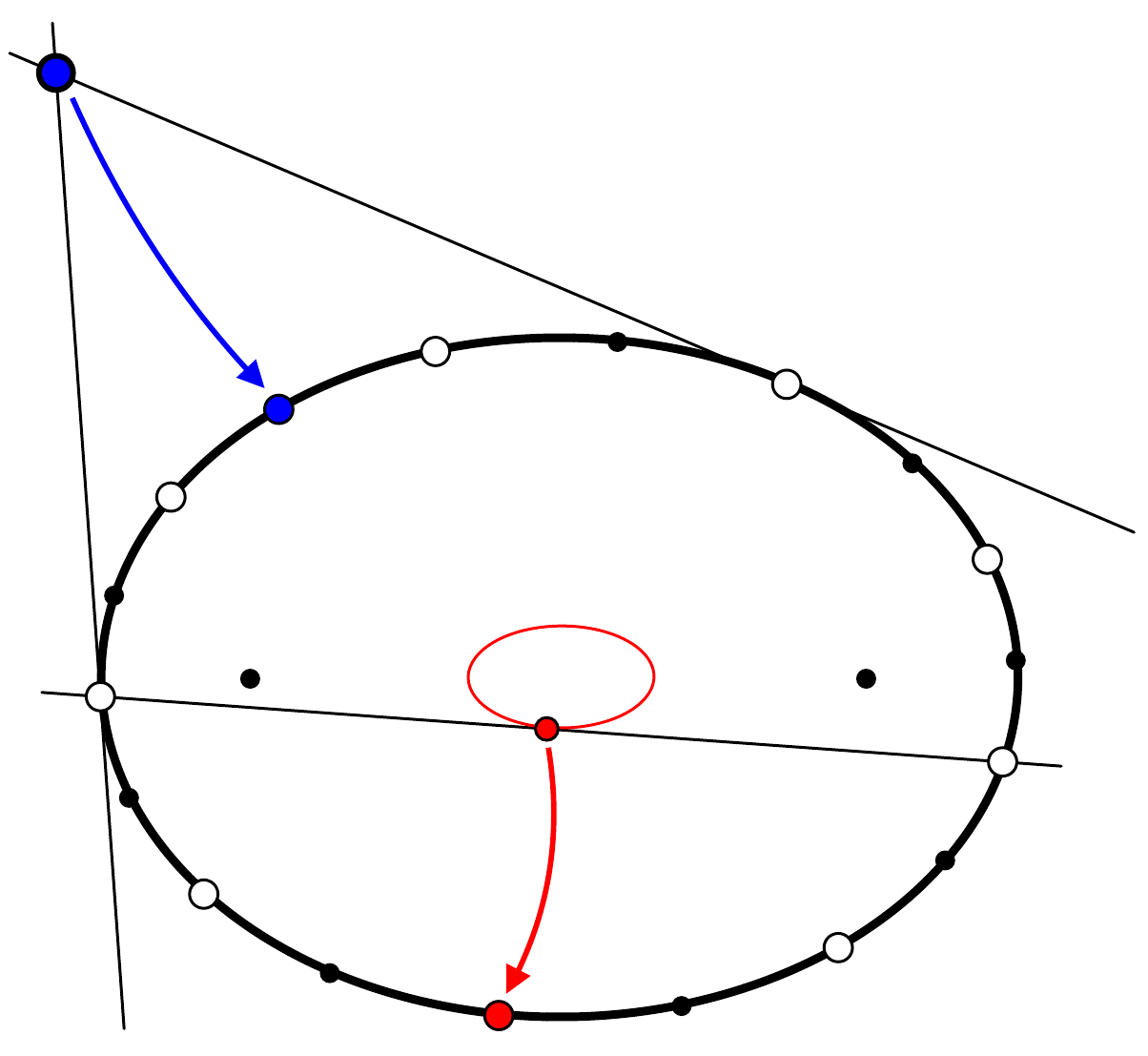}
\begin{picture}(0,0)
\put(-184,58){\footnotesize $0\sim 9$}
\put(-152,18){\footnotesize $1$}
\put(-111,-5){\footnotesize ${a\over2}=2$}
\put(-47,8){\footnotesize $3$}
\put(-20,48){\footnotesize $4=a$}
\put(-25,77){\footnotesize $5$}
\put(-60,106){\footnotesize $6\sim -3=-b$}
\put(-115,110){\footnotesize $7$}
\put(-157,86){\footnotesize $8$}
\put(-140,85){\footnotesize ${-b\over 2}=-1.5\sim 7.5$}
\end{picture}
\end{center}
\vskip-0mm
\captionof{figure}{ Two examples of representing the operations  $\lns[a]$ and $\pts[b]$
by index shifts. The operation $\lns[a]$ connects two points $i$ and $i+a$ by a line that is tangent to the red conic in the dual Poncelet grid. Its touching point is mapped to a point $a\over 2$ on the central conic when the inner red conic is linearly blown up to match the outer one. Similarly $\pts[b]$ results in the point of intersection of two tangents. It lies on a certain conic of the Poncelet grid (not shown in the picture). Shrinking this conic down to the black conic maps this intersection to $-b\over 2$.} \label{fig:indextrafo}
\end{figure}

Let us step back for a moment. The above considerations give us the possibility to represent all points and lines of all rings in an 
$m\#(a_1,b_1;a_2,b_2;\ldots;a_k,b_k)$ construction by points on {\it one} Poncelet $2m$-gon on a single conic. Let us assume that these points are labelled 
by $0.5, 1, 1.5, 2, 2.5, 3, 3.5\upto m$ that represent their shifts referred  to an initial point; we consider indices modulo $m$. 
Then, our operations $\lns[a]$ and $\pts[b]$  for $1 \leq a,b<m/2$ may be expressed by a cyclic shift by $a/2$, resp.\ ${(-b)/2}$ of these indices. 
The minus sign for the operation $\pts[b]$ occurs because we defined this operation by intersection of lines that are shifted clockwise (mathematically negative) by $b$ index steps. 
The situation is depicted in Figure~\ref{fig:indextrafo}.
 After working out all the details in this approach one can derive a conceptually very  simple proof of the fact that the points in
  \[
\pts[c]\cdot\lns[b]\cdot
\pts[a]\cdot\lns[c]\cdot
\pts[b]\cdot\lns[a](P)
\]
are shifted identically to those of $P$.

By our above considerations we represent each ring of points and lines
by the points of one Poncelet $2m$-gon generated by a shift $1\over 2m$.
Operations of the form $\pts[a]$ result in an index shift of $a\over 2$ on these points
and operations of the form $\lns[b]$ result in an index shift of $-b\over 2$ on these points.
Thus, showing that the above sequence of operations results in the identity simply translates---in the distorted metric of shifts---to the simple equation 
 \[
{a\over 2}
-{b\over 2}
+{c\over 2}
-{a\over 2}
+{b\over 2}
-{c\over 2}=0.
\]

\subsubsection{Pentagram map}

We do not want to close this section without mentioning the relations of our results to the topic of {\it pentagram maps}. 
The pentagram map was introduced by R.\ Schwartz about 30 years ago, and it has been thoroughly studied since then. 
See \cite{GSTV16,Sch92,OST10} for some initial literature. 
The pentagram map studies maps that arise when mapping the vertices of a polygon $P$ to the intersections of diagonals of the polygon (similar to our $\pts[b]\cdot \lns[a](P)$ operations).
Originally the map was only considered on the short diagonals of a polygon and it was later extended to more general cases.

Let us denote the short pentagram map by $T_2$.
It is constructed by connecting the vertices  $p_i$ and $p_{i+2}$ of an $m$-gon and then intersecting consecutive diagonals. 
Similarly one defines the deeper-diagonal pentagram maps $T_k,\ k<m/2$  by connecting the vertices $p_i$ and $p_{i+k}$.
In our notation the map $T_k$ is represented by the operator  $\pts[1]\cdot\lns[k]$.

The pentagram map commutes with projective transformations, and it descends to the moduli space of projective equivalence classes of polygons. 
Thus the polygons that arise are considered modulo projective transformations. The first (and slightly surprising) fact in this theory is that the map $T_2$ for the pentagon is the identity. 
In other words, the intersections of the diagonals of a pentagon are projectively equivalent to the original pentagon.
This was already known to Clebsch \cite{Cle1871}.
In general, the resulting map is completely integrable in the sense of Liouville and, by now, it is one of the best known and most studied example of a discrete integrable system. 
In particular, the pentagram map is intimately related with the recently emerged theory of cluster algebras.

 \begin{figure}[ht]
\centering
\includegraphics[width=.76\textwidth]{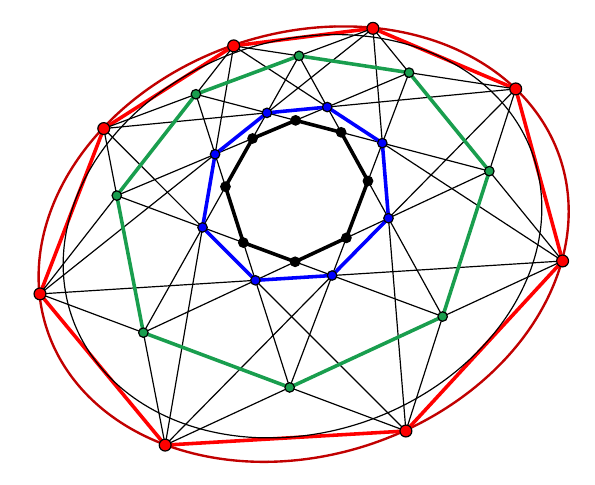}
\caption{$T_2 \circ T_3=T_3 \circ T_2$: both compositions send the red outer Poncelet octagon to the black inner one.}	
\label{fig:commute}
\end{figure}

 With this notation, our Theorem B implies that, if applied to Poncelet polygons, these pentagram maps commute: $T_s \circ T_t = T_t \circ T_s$, see Figure \ref{fig:commute}. 

Let us mention that the pentagram map nicely interacts with Poncelet polygons. 
For example, recall that the pentagram map is completely integrable: a  number of functions on the space of polygons are its integrals. It is proved in \cite{Sch15} that these integrals remain constant on the 1-parameter family of polygons, inscribed into one and circumscribed about another conic.  As another example, it is proved in \cite{Izo22} that  if a convex polygon is projectively equivalent to its pentagram image, then it is Poncelet.

\subsubsection{The easier case of odd-gons}

For the proof of our core Theorem A several essential ingredients had to be taken together: Poncelet grids, a continuous incidence theorem similar to the Chasles--Graves Theorem, an elaborate geometric construction, and careful bookkeeping of the labels and indices involved.
Amazingly, if the number of points on the Poncelet polygons is odd, then a significantly simpler proof can be applied
(actually, this is how we started). The deep reason for that is that for a Poncelet $m$-polygon $P$ with odd $m$ the result of the operation
$(\pts[2]\cdot\lns[1])(P)$ leads to a polygon that is projectively equivalent to $P$.
This is not the case for even $m$. 
 This projective equivalence provides a kind of shortcut in the argument. 
For that case we need 
the following strengthening of Theorem \ref{ponceletGrid1}, a proof of which can be found in~\cite{LaTa07,Sch07} (using different notation). 

\begin{theorem} \label{thm:grid2}
Let $m$ be odd, and let $P$ be a Poncelet $m$-gon with lines $L = \lns[1](P)$. Then, for every $a$, there exists a projective transformation $\tau_a$ that takes the points (taken as a set) of the polygon $P$ to points of the polygon 
$\pts[a](L)$, and these projective transformations commute.
\end{theorem}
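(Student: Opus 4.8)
\medskip
\noindent\emph{Sketch of a proof.}
The plan is to transport everything to the elliptic curve that uniformises the Poncelet dynamics and to reduce the theorem to an arithmetic statement in a cyclic group of odd order. To the pair consisting of the conic $\mathcal{C}$ on which $P$ lies and the conic $\mathcal{X}$ to which $L=\lns[1](P)$ is tangent, associate the incidence curve $E=\{(q,\ell)\ :\ q\in\mathcal{C},\ q\in\ell,\ \ell\text{ tangent to }\mathcal{X}\}$, which for $\mathcal{C}$, $\mathcal{X}$ in general position is smooth of genus one. It carries the two covering involutions $\sigma$ (change the tangent line through $q$) and $\sigma'$ (change the second point of $\ell\cap\mathcal{C}$), and, after choosing an origin, the Poncelet step $T=\sigma'\sigma$ becomes translation by a point $t\in E$. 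The hypothesis that $P$ is an $m$-gon says precisely that $t$ has order $m$, so the vertices of $P$ lift to a single coset $A=e_0+\langle t\rangle$ of size $m$, while $\mathcal{C}=E/\sigma$ and $\mathcal{X}\cong\mathcal{X}^{\ast}=E/\sigma'$. The case $a=1$ is vacuous, since $\pts[1](L)=P$, so assume $a\neq 1$.

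The first real step is a uniformity observation: by Theorem~\ref{ponceletGrid1} the conics carrying the grid rings, together with $\mathcal{X}$, are co-dependent, hence lie in a common co-pencil and therefore share the same four common tangents. Consequently the genus-one curve $E_a$ attached to the pair $(\mathcal{C}_a,\mathcal{X})$, where $\mathcal{C}_a$ is the conic through $\pts[a](L)$, is a double cover of $\mathcal{X}^{\ast}\cong\mathbb{P}^1$ branched over the \emph{same} four points as $E\to\mathcal{X}^{\ast}$; since a double cover of $\mathbb{P}^1$ is determined by its branch locus, there is a canonical isomorphism $E_a\cong E$ intertwining the projections to $\mathcal{X}^{\ast}$ and the involutions $\sigma'$. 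Under this identification the grid ring $\pts[a](L)$ --- which by the Poncelet grid theorem is itself a Poncelet polygon inscribed in $\mathcal{C}_a$ and circumscribed about $\mathcal{X}$ --- lifts again to a coset $A_a=e_0+w_a+\langle t\rangle$ of $\langle t\rangle$ for some offset $w_a$, and $\mathcal{C}_a$ is realised as $E/\sigma_a$, where $\sigma_a$ is a second hyperelliptic-type involution of $E$ differing from $\sigma$ by a translation; write $\sigma_a=T_{\delta_a}\circ\sigma$. Both $w_a$ and $\delta_a$ lie in the subgroup of $E$ generated by $t$ and the $2$-torsion $E[2]$.

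Now a translation $T_\mu$ of $E$ descends to a well-defined isomorphism $\mathcal{C}=E/\sigma\to\mathcal{C}_a=E/\sigma_a$ exactly when it conjugates $\sigma$ into $\sigma_a$, which a one-line computation shows is equivalent to $2\mu=\delta_a$; and among such $\mu$ --- a coset of $E[2]$ --- one maps $A$ onto $A_a$ precisely when $\mu\equiv w_a\pmod{\langle t\rangle}$. This is where oddness of $m$ enters and is indispensable: when $m$ is odd, $2$ is invertible modulo $m$, so one may halve inside $\langle t\rangle$, and $\langle t\rangle\cap E[2]=0$; combined with the location of $\delta_a$ and $w_a$ this lets one pin down a single $\mu=\mu_a$ meeting both constraints. (For even $m$ the two constraints are genuinely incompatible on the odd parity class, which is why the rings then split into two projective classes.) The resulting map $\tau_a\colon\mathcal{C}\to\mathcal{C}_a$ is an isomorphism of plane conics carrying $P$ to $\pts[a](L)$, and any isomorphism of two plane conics extends to a projective transformation of the plane. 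For commutativity one observes that each $\tau_a$ is covered by the translation $\mu_a$ of the common curve $E$, that translations of $E$ commute, and that the lifts $\mu_a$ can be chosen coherently (all in the subgroup generated by $t$ and one fixed half-period), so the $\tau_a$ pairwise commute.

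The step I expect to be the real obstacle is the explicit bookkeeping behind the middle paragraph: canonically matching $E_a$ with $E$, and --- more delicately --- pinning down $\delta_a$ and $w_a$ as concrete combinations of multiples of $t$ and half-periods, sharply enough to see that the halving condition $2\mu=\delta_a$ and the coset condition $\mu\in w_a+\langle t\rangle$ admit a common solution exactly for odd $m$. (The non-generic positions of $\mathcal{C}$, $\mathcal{X}$, and the cases $\gcd(a,m)>1$ in which $\pts[a](L)$ decomposes into several subpolygons, should be absorbed by a continuity/specialisation argument.) A more synthetic alternative, closer to \cite{LaTa07,Sch07}, is to parametrise $\mathcal{C}$ and each $\mathcal{C}_a$ by their Poncelet rotation measures so that $P$ and $\pts[a](L)$ become arithmetic progressions, and then to check directly that the conic-to-conic map matching these progressions is equivariant for the covering involutions and hence projective; the parity of $m$ resurfaces there as the solvability of exactly the same halving.
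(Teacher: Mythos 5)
The paper itself does not prove this theorem: it is imported from \cite{LaTa07,Sch07}, and the only in-text commentary is the remark that, after normalising the grid to confocal conics symmetric about the axes, $\tau_a$ can be taken to be a diagonal matrix of signature $(+,+,+)$ or $(-,-,+)$ according to the parity of $a$. Your sketch is a reconstruction of the elliptic-curve proof in the style of \cite{Sch07}: uniformise by the incidence curve $E$, realise the Poncelet step as a translation of exact order $m$, identify every $E_a$ with $E$ through the common branch locus over $\mathcal{X}^{*}$ (this is precisely where the co-dependence of the grid conics from Theorem~\ref{ponceletGrid1} enters), and obtain $\tau_a$ by descending a translation $T_{\mu_a}$ with $2\mu_a=\delta_a$. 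That architecture is correct, and you have identified the right mechanism for the odd/even dichotomy: halving is unambiguous in $\langle t\rangle\cong\mathbb{Z}/m$ exactly when $m$ is odd, while for even $m$ the $E[2]$-ambiguity splits the rings into two projective classes.

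Two steps remain genuinely open, and I would not sign off on the sketch as a proof. First, the one you flag yourself: determining $\delta_a$ and $w_a$ sharply enough to see that $2\mu=\delta_a$ and $\mu\in w_a+\langle t\rangle$ admit a common solution is not peripheral bookkeeping --- it is the entire content of the theorem. Without it you have only shown that \emph{some} translation conjugates $\sigma$ into $\sigma_a$, i.e.\ that $\mathcal{C}$ and $\mathcal{C}_a$ are abstractly identified, not that the identification carries the labelled point set $P$ onto $\pts[a](L)$. Second, the commutativity argument is weaker than claimed. The maps $\tau_a\colon\mathcal{C}\to\mathcal{C}_a$ have different targets, so ``commuting'' can only refer to their unique extensions to projective transformations of the plane; and the extension of $\tau_a$ does \emph{not} fix $\mathcal{X}$ (it sends the edge lines $p_i\vee p_{i+1}$ to the short diagonals of the ring $\pts[a](L)$, which are tangent to a different conic of the pencil), so commutativity of the translations $T_{\mu_a}$ on the single curve $E$ does not transfer: one must still control how the planar extension of $\tau_a$ acts away from $\mathcal{C}$, and a lift could a priori act on other rings as the reflection $T_{\mu_a}\circ\sigma$ rather than the translation, and reflections of $E$ do not commute. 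The clean repair is the one hinted at after the theorem in the paper: in the confocal normalisation all the $\tau_a$ preserve the common self-polar triangle of the co-pencil and are simultaneously diagonalisable, whence they commute as elements of a torus in $\mathrm{PGL}_3$. Either that argument or an explicit verification that each planar extension is covered by a genuine translation needs to be supplied.
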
 

For the case in which the Poncelet polygon is supported by an ellipse  $\mathcal{A}$ that  is symmetric with respect to the coordinate axses we can express these projective transformations in a very simple way.
The points  $Q=(\pts[a]\cdot\lns[1])(P)$ are the points of another Poncelet polygon supported by an ellipse $\mathcal{B}$ confocal to  $\mathcal{A}$. 
There are four natural projective  transformations that map the ellipse  $\mathcal{B}$  to  $\mathcal{A}$, by scaling the $x$ and $y$ coordinates. The four possibilities come from the different signs of the scaling in $x$ and $y$ direction.
The projective transformation  $\tau_a$, whose existence is stated in the above theorem, that in addition maps the points set $P$ to the point set $Q$, 
can be expressed as a diagonal $3\times 3$ matrix. For odd $a$
this matrix has signature $(+,+,+)$, and for even $a$ the signature is $(-,-,+)$.

If we want to overcome the fact that in this mapping the assignment is only setwise then we have to take a cyclic shift of indices into account. Let $\pi\in S_m$ be the permutation that cyclically shifts the points in $(1,\ldots , m)$ by one step.
Then the we get the following more precise formulation of Theorem \ref{thm:grid2}.

\begin{theorem} \label{thm:grid3}
Let $m$ be odd, and let $P$ be a Poncelet $m$-gon with lines $L = \lns[1](P)$. Then, for every $a$, there exists a projective transformation $\tau_a$ and a cyclic shift $\pi^{k}$ such that $(\pts[a]\cdot\lns[1])(P)_i=(\tau_a(P))_{\pi^{k}(i)}$.
\end{theorem}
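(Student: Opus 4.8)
\textbf{Proof proposal for Theorem~\ref{thm:grid3}.}

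The plan is to promote the setwise statement of Theorem~\ref{thm:grid2} to the pointwise one by exploiting the cyclic symmetry of the construction. Write $F=\pts[a]\cdot\lns[1]$ and recall that the operators $\lns[1]$ and $\pts[a]$ are equivariant for the cyclic relabelling $\pi$ of indices, so $F(\pi\cdot P)=\pi\cdot F(P)$ for every Poncelet $m$-gon $P$. By Theorem~\ref{thm:grid2} (which is where oddness of $m$ enters) there is a projective transformation $\tau_a$ carrying the vertex set of $P$, lying on $\mathcal{A}=\mathcal{C}_0$, onto the vertex set of $F(P)=\pts[a](L)$, lying on $\mathcal{C}_a$; moreover, in the axis-symmetric normalisation recalled just before the statement, $\tau_a$ is the explicit diagonal matrix determined by $\mathcal{A}$, $\mathcal{X}$, $a$ and the parity of $a$, so in particular it does not depend on which Poncelet $m$-gon it is applied to. Hence there is a permutation $\sigma_a\in S_m$ with $\tau_a(p_i)=F(P)_{\sigma_a(i)}$ for all $i$, and the whole theorem reduces to showing that $\sigma_a$ is a power of $\pi$, say $\sigma_a(i)=i+c$; the asserted identity $F(P)_j=(\tau_a(P))_{\pi^{k}(j)}$ then follows with $k=-c$ by setting $j=\sigma_a(i)$.

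To identify $\sigma_a$, I would first argue that it is independent of the starting vertex of $P$. As the starting point moves continuously along $\mathcal{A}$, the vertices $p_i$ and the points $F(P)_i$ move continuously and, generically, stay pairwise distinct, while $\tau_a$ stays fixed; so the relation $\tau_a(p_i)=F(P)_{\sigma_a(i)}$ forces $\sigma_a$ to be locally, hence globally, constant along the family. Now compare two members of the family differing by exactly one Poncelet step: for the polygon $P'$ started at $p_2$ we have $P'_i=p_{i+1}$, whence $L'=\pi\cdot L$ and $F(P')_i=(\pts[a](L))_{i+1}$ by equivariance of $F$. Then $\tau_a(p_{i+1})=\tau_a(P'_i)=F(P')_{\sigma_a(i)}=(\pts[a](L))_{\sigma_a(i)+1}$, while on the other hand $\tau_a(p_{i+1})=(\pts[a](L))_{\sigma_a(i+1)}$. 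Comparing gives $\sigma_a(i+1)=\sigma_a(i)+1$ for all $i$, so $\sigma_a$ is the rotation $i\mapsto i+\sigma_a(0)$, as required.

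The step I expect to carry the real weight is making legitimate use of the fact that $\tau_a$ is \emph{the same} transformation for every Poncelet $m$-gon on $(\mathcal{A},\mathcal{X})$: the bare statement of Theorem~\ref{thm:grid2} produces $\tau_a$ per polygon and only up to a setwise match, so without either the explicit diagonal description of $\tau_a$ recalled above or the construction underlying Theorem~\ref{thm:grid2} in~\cite{LaTa07,Sch07}, the argument does not get started. A minor point to handle is the genericity caveat in the continuity step (ruling out positions where two of the $F(P)_i$ collide, as well as the compound case $\gcd(a,m)>1$): since $\sigma_a$ is then constant on a dense open subset of the family and the final identity is a closed condition, it extends to all Poncelet $m$-gons. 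An alternative, purely computational route avoids the equivariance: parametrise $\mathcal{A}$ and $\mathcal{C}_a$ by $t\mapsto(\alpha\cos t,\beta\sin t)$, observe that the diagonal $\tau_a$ acts on these parameters by $t\mapsto t$ when $a$ is odd and by $t\mapsto t+\pi$ when $a$ is even — orientation-preserving in both cases by the signature conditions — and note that $p_i$ and $F(P)_i$ each occur in cyclic order in $i$, so that $\sigma_a$ must preserve the cyclic order on $\mathbb{Z}/m\mathbb{Z}$ and hence be a shift.
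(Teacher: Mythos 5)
Your proposal is correct and follows the route the paper itself intends: the paper disposes of this theorem with the single sentence that it ``can be proved directly from Theorem~\ref{thm:grid2} by careful bookkeeping of the indices,'' and your argument --- equivariance of $\pts[a]\cdot\lns[1]$ under the cyclic relabelling, the fact that $\tau_a$ is the fixed diagonal scaling determined only by the confocal conics and the parity of $a$, constancy of $\sigma_a$ along the connected one-parameter Poncelet family, and the one-step comparison forcing $\sigma_a(i+1)=\sigma_a(i)+1$ --- is a sound instantiation of exactly that bookkeeping. You correctly identify the load-bearing input (polygon-independence of $\tau_a$, supplied by the explicit diagonal description following Theorem~\ref{thm:grid2}); the genericity caveat is in fact vacuous here, since distinct tangents to a nondegenerate conic are never concurrent, so the points of $\pts[a](L)$ are automatically pairwise distinct and $\sigma_a$ is everywhere well defined.
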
 

From Theorem~\ref{thm:grid2} this statement can be proved directly by careful bookkeeping of the indices.
Let us denote the combined action of transformation and index shift as $F_a$, where $(\pts[a]\cdot\lns[1])(P)=F_a(P)$.
Notice that for shifts $a$ and $b$, the operations $F_a$ and $F_b$ still commute.

Applying Theorem \ref{thm:grid3}, to two different shifts $a$ and $b$ and using the fact that $F_a$ and $F_b$  commute we get:
\[
(\pts[a]\cdot\lns[1])\cdot (\pts[b]\cdot\lns[1])(P)=F_a(P)\cdot F_b(P)=
F_b(P)\cdot F_a(P)=(\pts[b]\cdot\lns[1])\cdot (\pts[a]\cdot\lns[1])(P).
\]
and hence
\[
(\pts[a]\cdot\lns[1]\cdot \pts[b])(L)=(\pts[b]\cdot\lns[1]\cdot \pts[a])(L).
\]

\noindent
Setting $P'= \pts[a](L)$ (which implies $L= \lns[a](P')$)  we get

\[
(\pts[a]\cdot\lns[1]\cdot \pts[b]\cdot\lns[a])(P')=(\pts[b]\cdot\lns[1]\cdot \pts[a]\cdot\lns[a])(P').
\]

\noindent
Multiplying both sides with $\lns[a]$ and cancelling leaves us with:
\[
(\lns[1]\cdot \pts[b]\cdot\lns[a])(P')=(\lns[a]\cdot\pts[b]\cdot\lns[1]\cdot)(P').
\]

\noindent
Since we can bijectively go back and forth between Poncelet polygons  by our $\wedge$ and $\vee$
operators, we can assume that $P'$ is an arbitrary Poncelet odd-gon.
Also, a corresponding dual statement holds for lines $L$ of a Poncelet odd-gon:
\[
(\pts[1]\cdot \lns[b]\cdot\pts[a])(L)=(\pts[a]\cdot\lns[b]\cdot\pts[1]\cdot)(L).
\]
Taking these statements together we can derive Theorem A for odd $m$, as follows:

\begin{align*}
\lns[a] \cdot \pts[b] \cdot \lns[c]\ (P) &= \lns[a] \cdot \pts[b] \cdot \lns[c] \cdot \overbrace{\pts[1] \cdot \lns[1]}^{\text{id}} \ (P)\\[1mm]
&=\lns[a] \cdot \underbrace{\pts[b] \cdot \lns[c]\cdot \pts[1]}_{\pts[1] \cdot \lns[c]\cdot \pts[b]} \cdot \lns[1] (P)\\
&= \underbrace{\lns[a] \cdot\pts[1] \cdot \lns[c]}_{\lns[c] \cdot\pts[1] \cdot \lns[a]}\cdot \pts[b]\cdot \lns[1] (P)\\
&=\lns[c] \cdot\underbrace{\pts[1] \cdot \lns[a]\cdot \pts[b]}_{\pts[b] \cdot \lns[a]\cdot \pts[1]}\cdot  \lns[1] (P)\\
&=\lns[c] \cdot\pts[b] \cdot \lns[a]\cdot \underbrace{\pts[1] \cdot \lns[1]}_{\text{id}}\  (P)\\
&=\lns[c] \cdot \pts[b] \cdot \lns[a]\ (P)
\end{align*}

\noindent
Which is exactly the statement of Theorem A.

\medskip

One might wonder why this approach cannot be applied in the case of Poncelet even-gons.
The main obstacle is that the point sets $\pts[a]\cdot\lns[1](P)$ are not necessarily projectively equivalent to the points of $P$,
depending on the parity of $a$. Hence, in that case, it is not easy to find the equivalent of the maps $F_a$ and $F_b$.
In a sense, our Lemma \ref{lem:commute} presents an alternative way to obtain a commutative behaviour from which we could derive Theorem A.

\section{Examples}\label{sect:examples}

Our exposition so far already contained a multitude of interesting $(n_4)$ configurations
for various choices of parameters:
Figure~\ref{fig:GrRi} shows a $7\#(3,1;2,3;1,2)$ configuration,
Figure~\ref{fig:celest1} presents a $8\#(3,1;2,3;1,2)$ configuration,
Figure~\ref{fig:star2} a $10\#(2,3;4,2;3,4)$ configuration and Figure~\ref{fig:13incircle} a $13\#(5,2;4,5;2,4)$ configuration.
In this section, we present a few of the more complex and slightly exotic configurations that are covered by our constructions.

\subsection{More than three rings}
Our Theorem C allows for the creation of $(n_4)$ configurations with arbitrarily many rings from a Poncelet polygon. 
Keeping in mind the condition that in the description 
$m\#(a_1,b_1;a_2,b_2;a_3,b_3;\ldots;a_k,b_k)$ the multisets 
$[a_1\upto a_k]$ and $[b_1\upto b_k]$ have to be identical,
and that no two adjacent letters can be the same, we get (up to isomorphisms) a unique pattern for such trivial celestial $(n_4)$ configurations with $k=4$. 
They follow one of the two patterns
\[m\#(a,b;c,d;b,a;d,c)\quad\text{or}\quad m\#(a,b;c,a;d,c;b,d).\]
For that to happen we need $m\geq 9$ and we may get a 4-ring configuration like $9\#(4,1;2,3;1,4;3,2)$.
Here the letters $1,2,3,4$ can be arbitrarily interchanged with each other. We show a configuration
$10\#(4,1;2,3;1,4;3,2)$ in Figure~\ref{fig:4rings}.
\begin{figure}[H]
\begin{center}
\includegraphics[width=.9\textwidth]{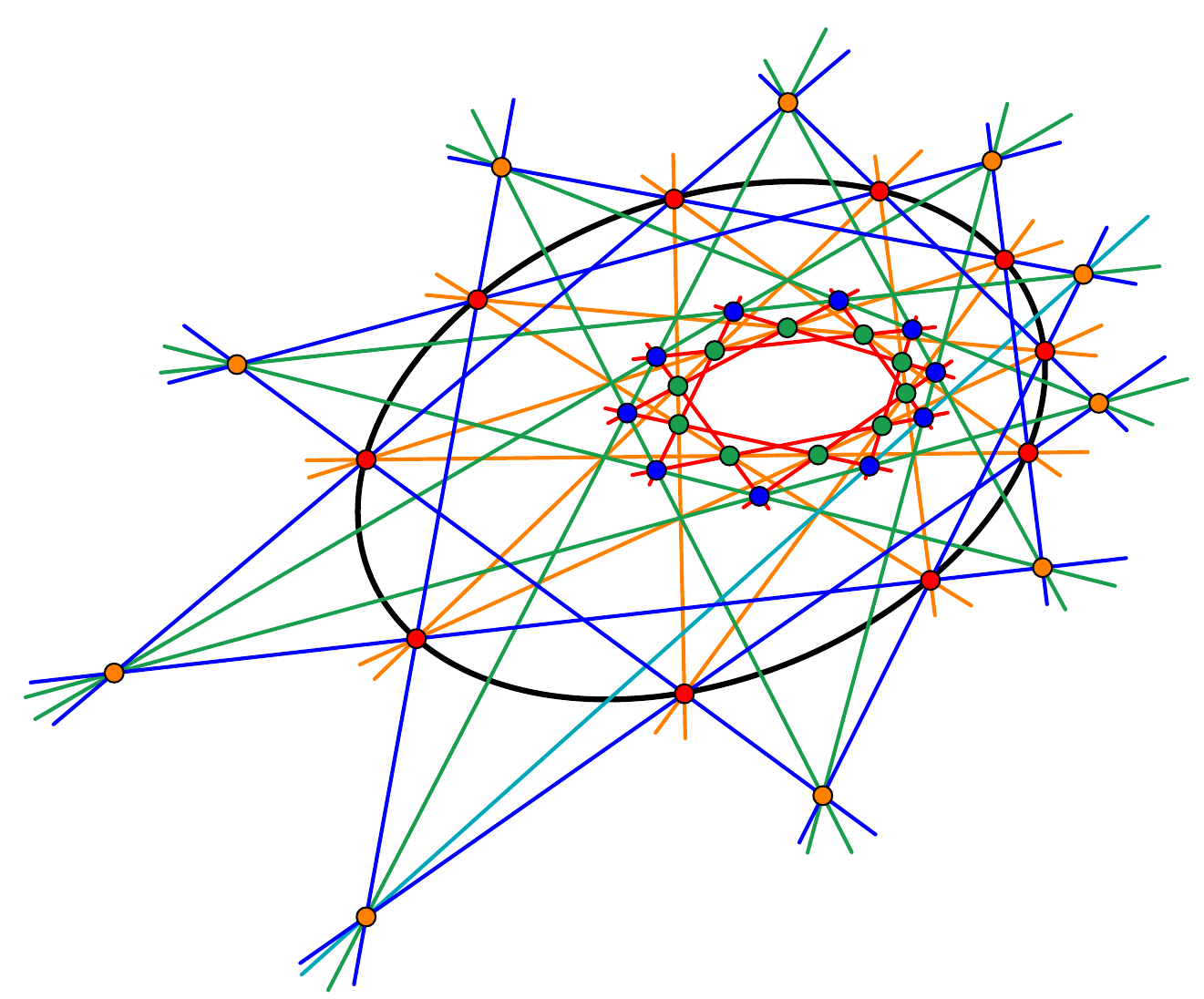}
\begin{picture}(0,0)
\end{picture}
\end{center}
\kern-3mm
\captionof{figure}{A $10\#(4,1;2,3;1,4;3,2)$ configuration. The initial ring is the ring of red points on the black conic.}\label{fig:4rings}
\end{figure}
This configuration has the additional interesting property that the green and the orange lines also meet in sets of four, as do the blue and red lines, if extended and thus may be extended to a $(4,6)$-configuration in the sense of \cite{BerBu10}.

\subsection{Breaking up additional incidences}

Our construction has not only the potential to create incidences. It also has the potential to
{\it destroy} incidences in a meaningful way.
Consider the configuration in Figure~\ref{fig:destroy} on the left.
It is a $12\#(5,4;1,4)$ configuration that was presented, for instance, in \cite[Figure 3.6.2]{Gr09}. 
It is a $(24_4)$ configuration that consists only of 2 rings of 12 points each.
This configuration exists because there is a non-trivial solution to the cosine condition, due to
special trigonometric relations between the angles of the type $i\cdot{\pi\over 6}$.
This can be achieved if the initial ring consists of the points of a regular 12-gon.
These special incidences immediately break if we replace the 12-gon by an arbitrary Poncelet 12-gon.

\begin{figure}[H]
\begin{center}
\includegraphics[width=.45\textwidth]{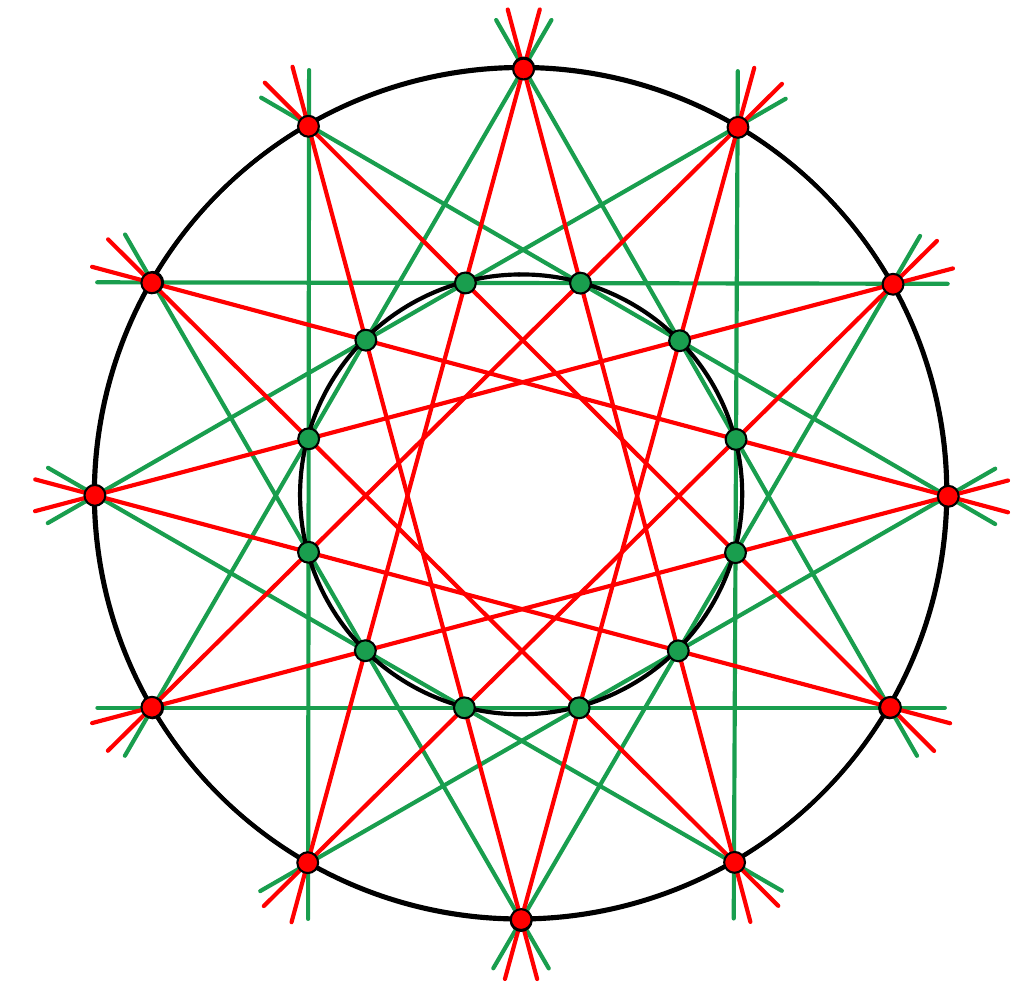}\hfill
\includegraphics[width=.49\textwidth]{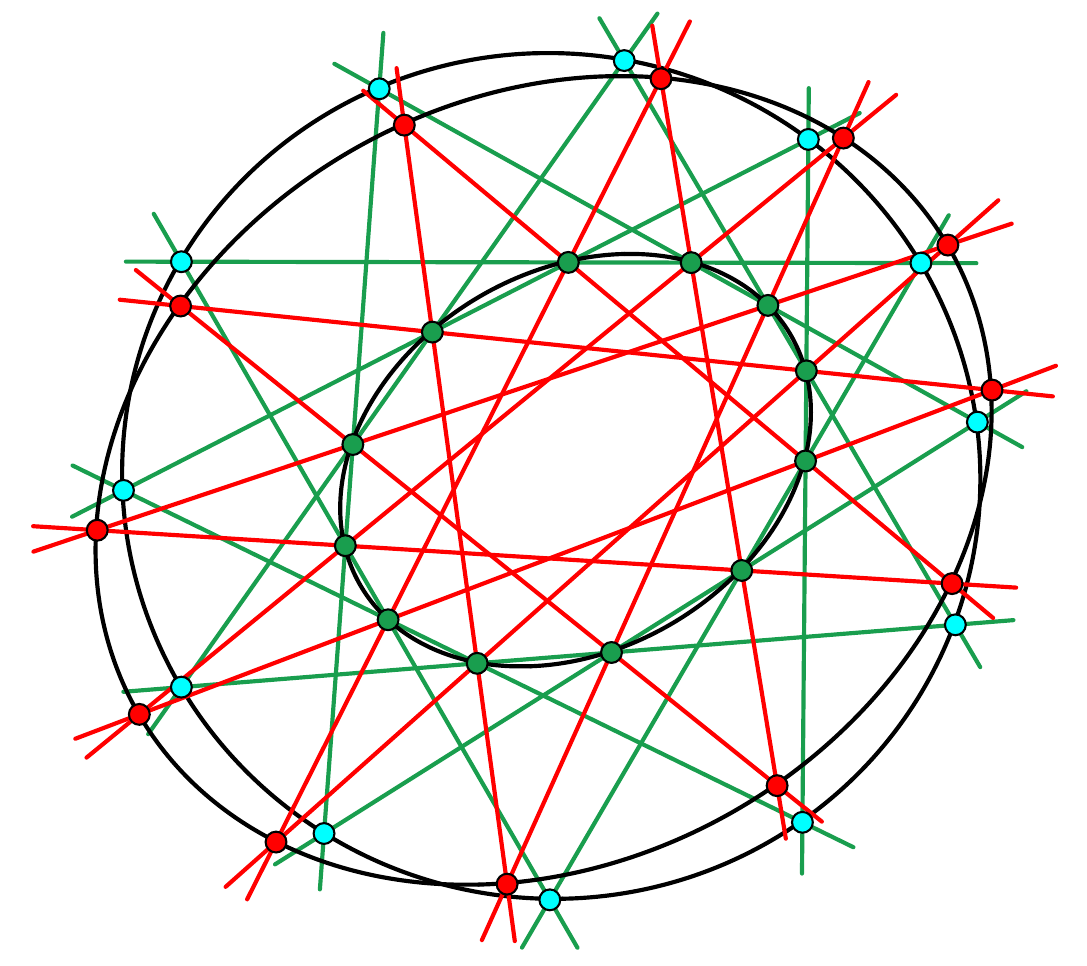}
\begin{picture}(0,0)
\end{picture}
\end{center}
\kern-3mm
\captionof{figure}{A $12\#(5,4;1,4)$ construction in symmetric position (left) and broken  by starting with a Poncelet $12$-gon (right)}\label{fig:destroy}
\end{figure}

One can use this effect to create $(n_4)$ configurations that otherwise (in the rotationally symmetric case) would only be realisable with additional unwanted incidences.
Figure~\ref{fig:destroy2} shows a trivial
$12\#(5,4;1,4;3,5;4,3;4,1)$ configuration 
that starts with the initial sequence of $12\#(5,4;1,4)$.
It is only realisable properly since we did not start with a fully symmetric dodecagon. If we had done so, we would have had additional incidences. 
Observe that  in the Poncelet-perturbed situation, the red and cyan points always lie nearby each other, but are not identical.

\begin{figure}[H]
\begin{center}
\includegraphics[width=.75\textwidth]{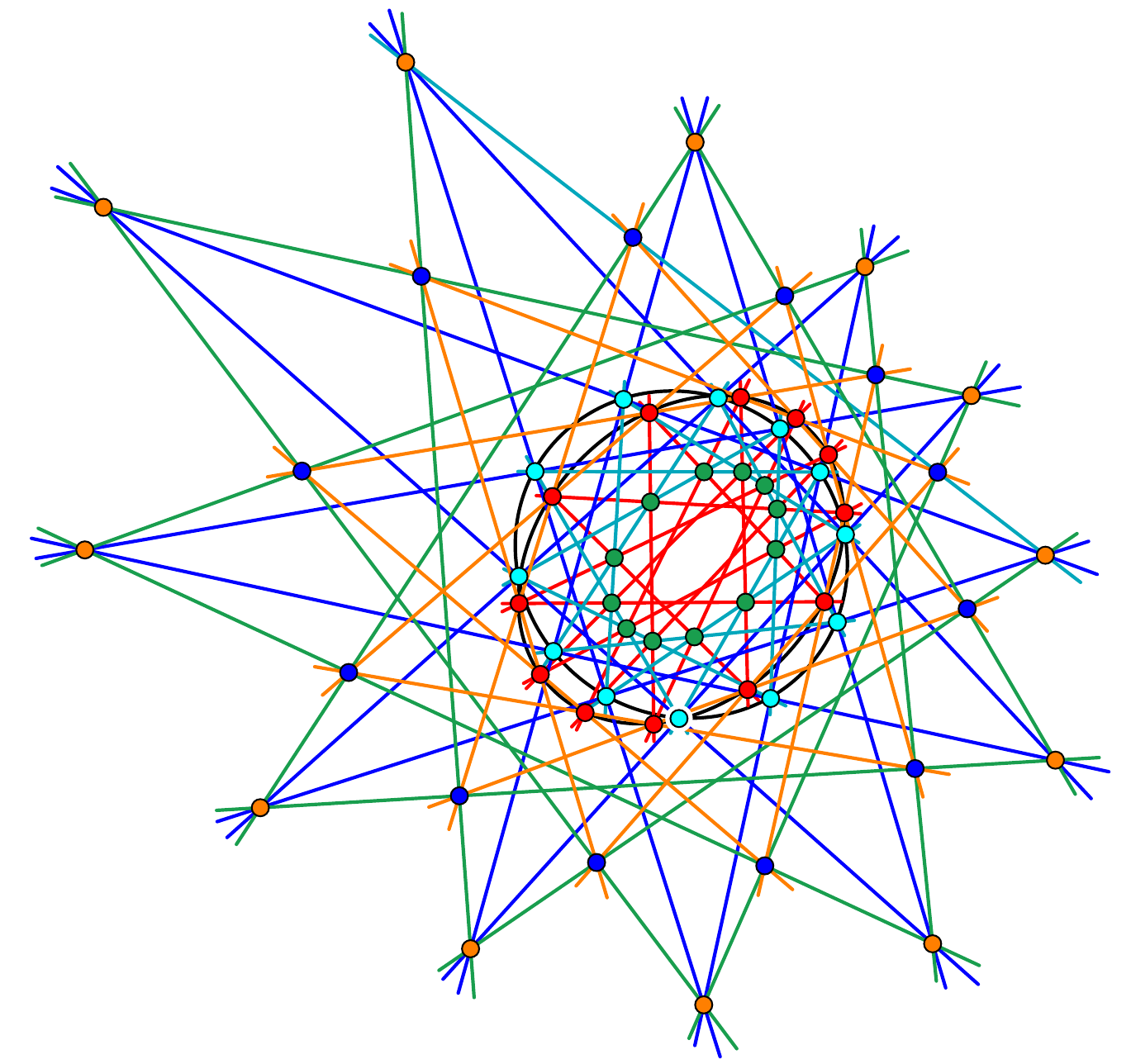}
\begin{picture}(0,0)
\end{picture}
\end{center}
\kern-3mm
\captionof{figure}{A construction of a configuraton $12\#(5,4;1,4;3,5;4,3;4,1)$ whose existence depends on starting with a non-regular Poncelet polygon.} \label{fig:destroy2}
\end{figure}

\subsection{Movable $(n_6)$-configurations}

Let us conclude our journey by an example that goes beyond $(n_4)$ configurations.
In \cite{Ber11} one of us created a method of nesting trivial celestial $(n_4)$ configurations to form higher structures with a huge degree of incidences. 
The idea is to use the same ring of points simultaneously in several $m\#(a,b;c,a;b,c)$ configurations for varying parameters. 
By this one can combine several rings of points in a way such that  each point is incident to multiple lines. 
The tricky part is to get a combinatorially consistent combination of such arrangements.  
For details on the process we refer to \cite{Ber11}. Here we only want to explain how this relates to the methods in the present article.

There is a specific construction that works on 10 rings  of points, with each ring taking part in ten $m\#(a,b;c,a;b,c)$  configurations. 
For that, from a selection of 5 different shift parameters $a,b,c,d,e$, all possible three element subsets are taken, and with those shifts 10 $(n_4)$ configurations are created simultaneously. 
This leads to an $(n_6)$ configuration with 10 rings of points and 10 rings of lines. 
The Poncelet movability is inherited from the partial configurations to the big configurations.

Since one needs at least 5 different shift parameters, such a construction needs at least 11 points in each ring. 
Thus a $(110_6)$ is the smallest possible instance of such a configuration. 
Figure~\ref{fig:120_6} shows a Poncelet distorted example of a $(120_6)$ configuration based on 10 rings with 12 points each.
One of the underlying configurations, a $12\#(3,2;4,3;2,4)$, is emphasised in the picture. 
We use a beginning Poncelet 12-gon, here, because it is easier to construct geometrically exactly by doubling; we do not have a purely geometric construction to produce a starting Poncelet 11-gon. Explicit constructions are given in \cite{BGRGT24b}.

\begin{figure}[H]
\begin{center}
\includegraphics[width=.85\textwidth]{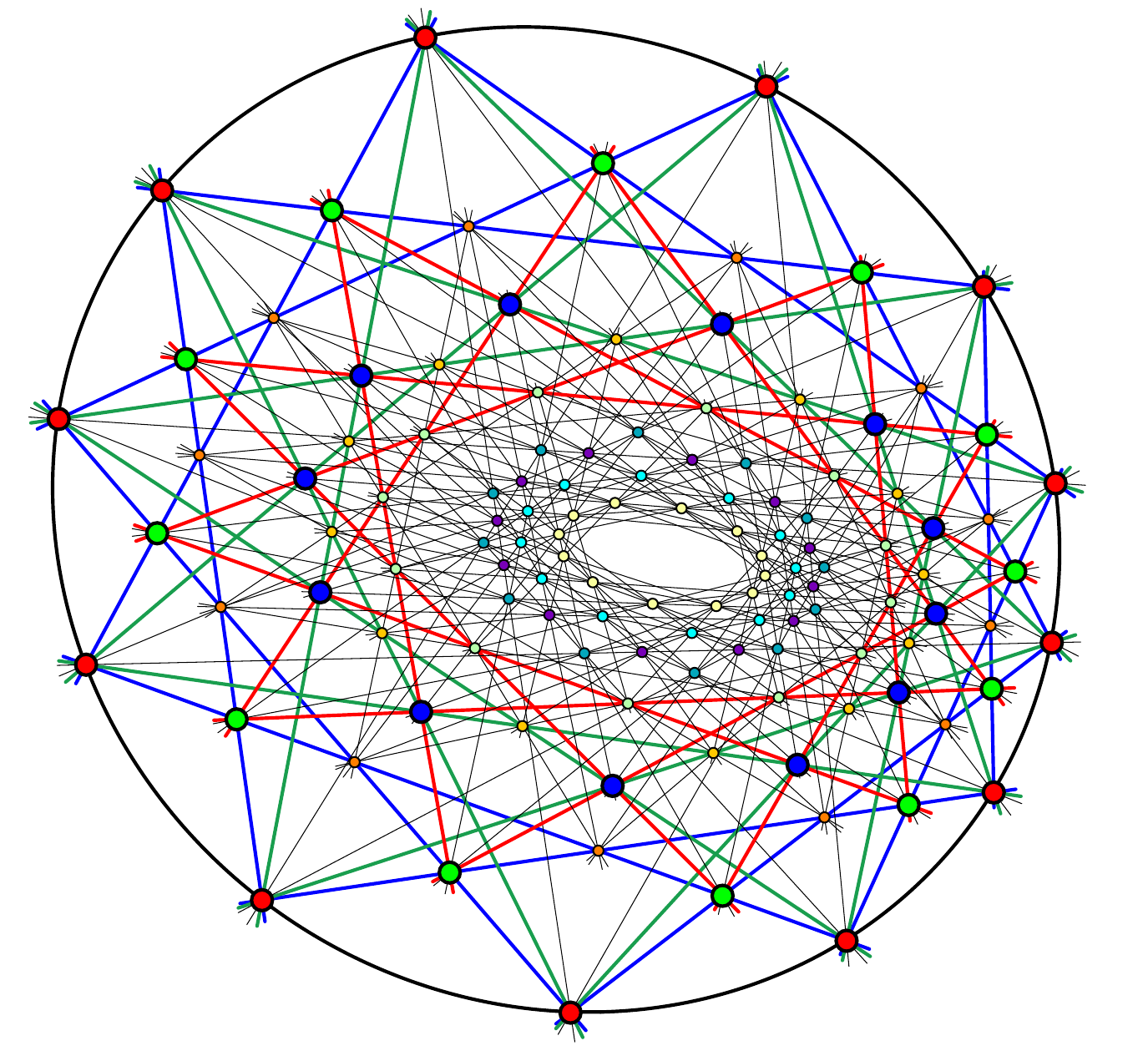}
\begin{picture}(0,0)
\end{picture}
\end{center}
\kern-3mm
\captionof{figure}{A movable $(120_6)$ configuration consisting of 10 nested configurations with 3 rings.
A subconfiguration of type $12\#(3,2;4,3;2,4)$ is highlighted by the bigger points and lines in red/green/blue.} \label{fig:120_6}
\end{figure}

Although the picture is kind of overcrowded, it has stunning combinatorial properties.
First of all, there are 10 types of points and 10 types of lines. They are organised in a way such that on each line there are six points of three types and dually, 
through each point there are six lines of three different types. Thus hidden in the background, a $(10_3)$ configuration is responsible for the combinatorial structure. 
In this case it is a Desargues configuration. In addition, in the symmetric case, the Desargues graph is the reduced Levi graph of the configuration, see \cite[Figure 6]{BerBer14}.)
Figure~\ref{fig:120_6_G}
highlights this structure by emphasising exactly one element from each orbit (of lines and of points).
The highlighted region is indeed a Desargues configurations that is rotated by Poncelet shifts to form the entire configuration.

\begin{figure}[H]
\begin{center}
\includegraphics[width=.85\textwidth]{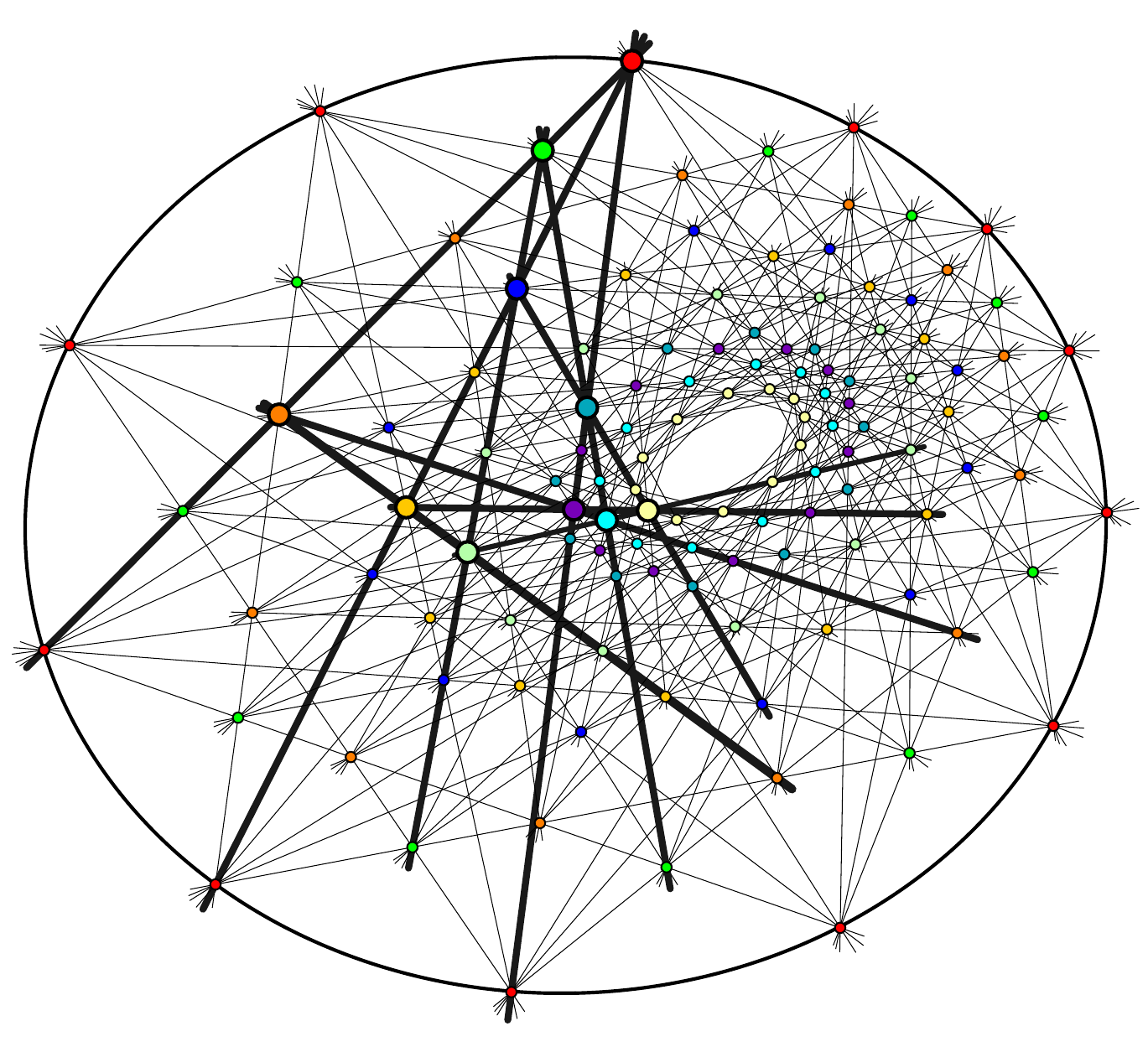}
\begin{picture}(0,0)
\end{picture}
\end{center}
\kern-3mm
\captionof{figure}{A $(120_6)$ configuration is created by Poncelet-rotating a Desargues configuration.} \label{fig:120_6_G}
\end{figure}

\subsection{When Grünbaum and Poncelet meet Pascal}

Our last example carries us over from the realm of point-line configurations to that of point-conic configurations.  Luis Montejano  observed that certain 7-tuples 
of points in the Grünbaum--Rigby $(21_4)$ configuration can be inscribed in conics. By applying the converse of Pascal's Theorem, one of us verified this property~\cite{Gev19},
and derived from the Grünbaum--Rigby configuration two different (i.e.\ non-isomorphic) $(21_7)$ configurations of points and conics. Precisely one of them has the property 
that it inherits movability from the underlying Grünbaum--Rigby configuration (see Figure~\ref{fig:GGconics} below). It turns out that these 21 conics admit 14 additional triple points that all lie on a common conic (black). This conic has an additional interesting property: polarising the 21 points at this conic creates the 21 lines of the original Grünbaum--Rigby  $(21_4)$-configuration.

We know other celestial point-line configurations which 
admit circumscribed conics such that point-conic configurations can be derived from them; searching for movable examples among these is a subject of future work. 

\begin{figure}[H]
\begin{center}
\includegraphics[width=.85\textwidth]{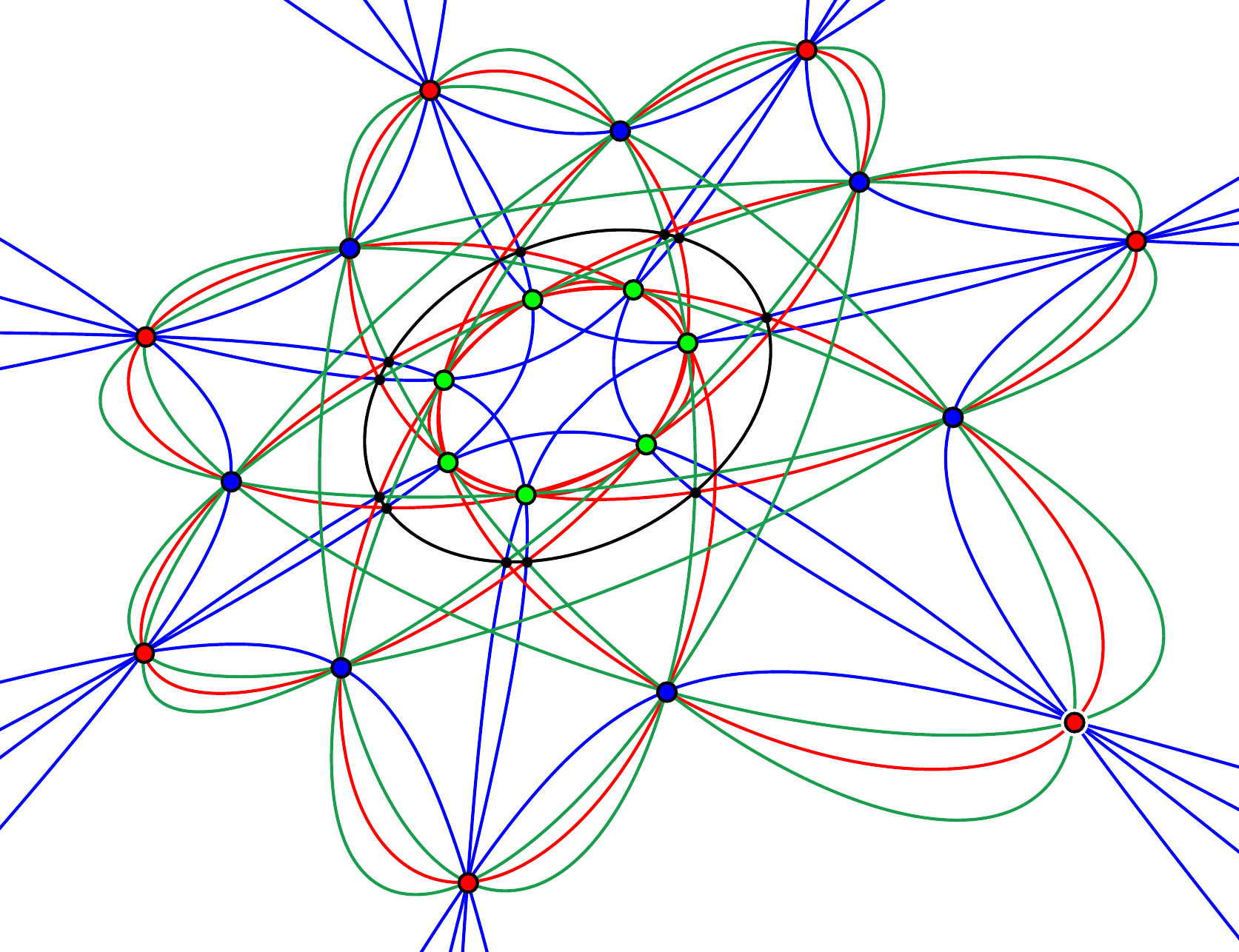}
\begin{picture}(0,0)
\end{picture}
\end{center}
\kern-3mm
\captionof{figure}{A movable $(21_7)$ configuration of points and conics.} \label{fig:GGconics}
\end{figure}

\paragraph{\bf Acknowledgements.}

 We are grateful to A. Akopyan, Tim Reinhardt and Lena Polke for a useful discussion.
ST was supported by NSF grants DMS-2005444 and DMS-2404535, and by a Mercator fellowship within the CRC/TRR 191. 
GG
was supported by the
Hungarian National Research,
Development and Innovation Office,
OTKA Grant No. SNN 132625.

\bibliographystyle{plain} 
{\small
\bibliography{refsd}
}
\newpage

\appendix 
\section{Proof of the core lemma}
This appendix is dedicated to the proof of the core technical statement{, Lemma~1}.
Recall that if we have a Poncelet chain with lines $l_0,l_1,l_2,\ldots$ tangent to a conic $\mathcal{X}$ then the intersections $l_i\wedge l_{i+k}$ all lie on a 
conic $\mathcal{C}_k$. As usual, by abuse of notation, we denote  the matrix defining a conic $\mathcal{C}$ also by the letter $\mathcal{C}$. Points and lines will be  identified with their homogeneneous coordinates.
We call two conics {\it in generic position} if they have four distinct points of intersection. In what follows we make the assumption that Poncelet chains are defined by pairs of conics that are in generic position. 

Poncelet chains that cyclically close up will always require inscribed and circumscribed conics in generic position.

\begin{figure}[h]
\begin{center}
\includegraphics[width=0.57\textwidth]{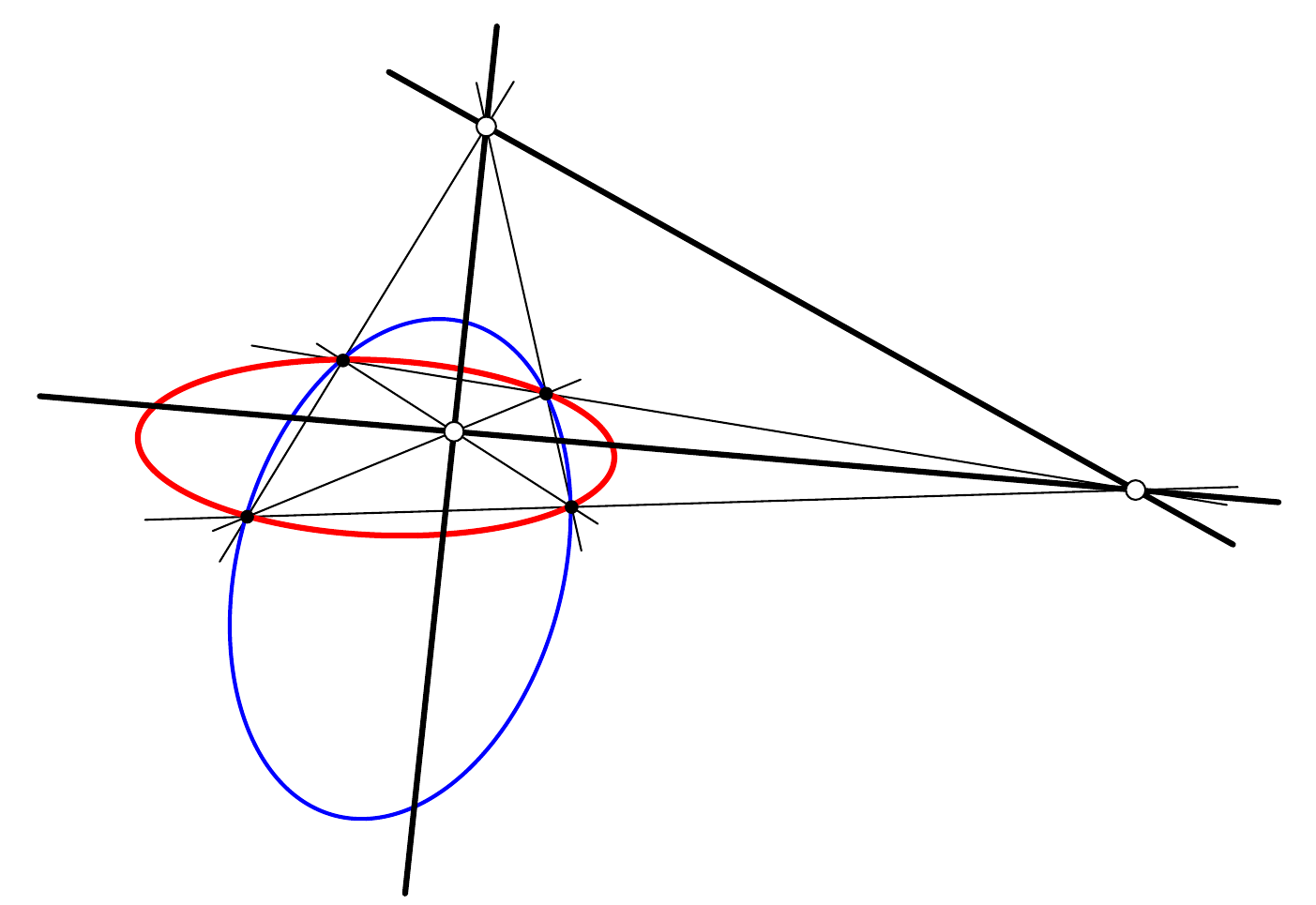}\quad
\includegraphics[width=0.4\textwidth]{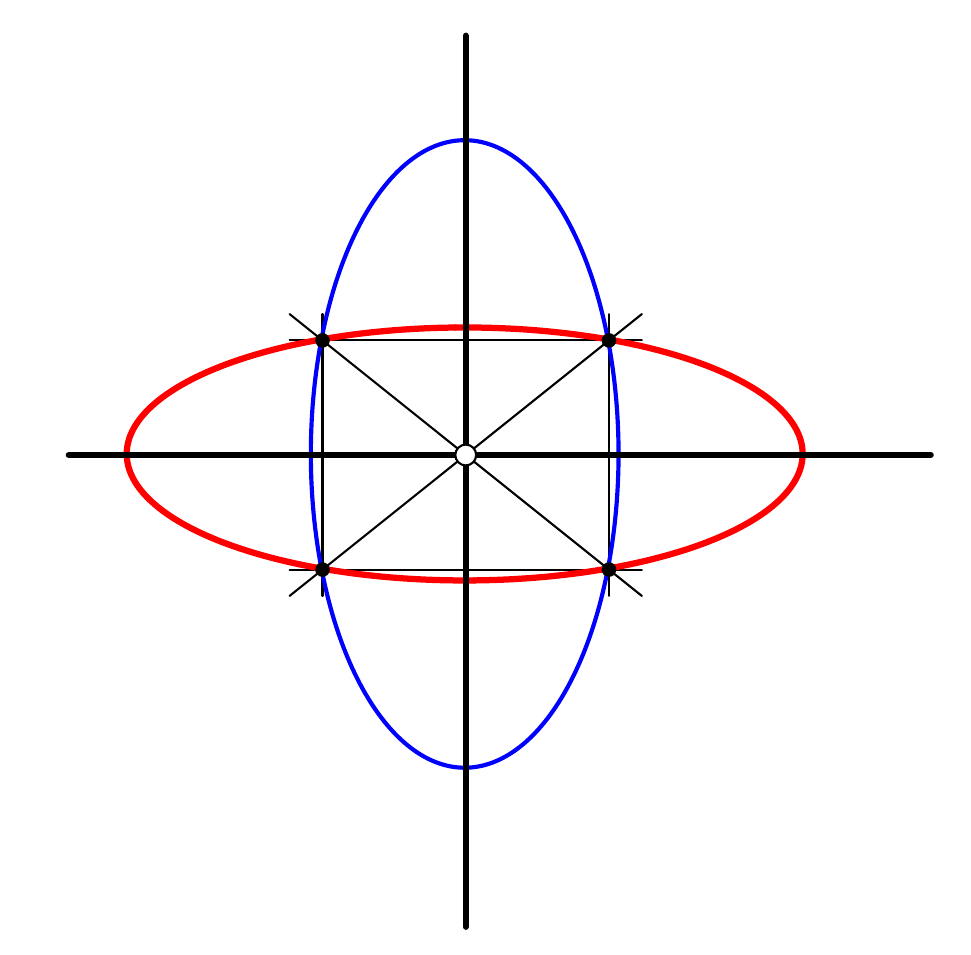}
\begin{picture}(0,0)

\end{picture}
\end{center}
\vskip-5mm
\captionof{figure}{Symmetrising a pair of conics with respect to the coordinate axes.} \label{fig:transform}
\end{figure}

If two conics are in generic position it is possible to apply  a (possibly complex) projective transformation that simultaneously diagonalises the two matrices. To see this consider the four points of intersection and map them to the vertices of the square $(\pm 1, \pm 1,1)$ (given in homogeneous coordinates). Both matrices can then be expressed as  linear combinations
of two arbitrary conics containing these four points (for instance $x^2+y^2-z^2=0$ and $x^2-y^2-z^2=0$) and therefore are in diagonal form.
Since invertible  diagonal matrices are closed under inversion and linear combination, all conics in a Poncelet grid can be diagonalised simultaneously.
We are heading for the following lemma.

\noindent
{\bf Lemma 1:\ }
{\it
Let $l_0,l_1,l_2,\ldots$ be the lines of a Poncelet chain
  tangent to a  
conic~$\mathcal{X}$, and let $a$, $b$ be such that
$l_0,\ l_a,\ l_b, \ l_{a+b}$ are pairwise distinct lines.
Let $\mathcal{B} = \mathcal{A}_a$ and 
$\mathcal{G} = \mathcal{A}_b$. Consider four points 
$
P=l_0\wedge l_a,\ 
P'=l_b\wedge l_{a+b},\ 
Q=l_0\wedge l_b,\ 
Q'=l_a\wedge l_{a+b}.\ 
$
Then the tangents 
$
\mathcal{B}\cdot P,\ 
\mathcal{B}\cdot P',\ 
\mathcal{G}\cdot Q,\ 
\mathcal{G}\cdot Q'
$ 
meet in a point.
}

It is adequate to speak of tangents here since $P$ and $P'$ lie on $\mathcal{B}$ and  $Q$ and $Q'$ lie on $\mathcal{G}$. Furthermore for our purposes it is no restriction to consider only situations in which the $l_0,\ l_a,\ l_b, \ l_{a+b}$ are all different (compare the proof of Theorem A).

\medskip

\subsection{The generic case}

In the situation of Lemma 1 the three conics $\mathcal{X}, \mathcal{B}, \mathcal{G}$ are codependent, since they are part of the same Poncelet grid. Hence we may apply a (potentially complex) projective transformation that maps all of them to diagonal matrices. Applying another (potentially complex) projective transformation we can scale the $x$ and $y$ axes and 
assume that $\mathcal{X}$ is the unit circle. By stereographic projection
we parametrise the points on $\mathcal{X}$ by  $(t^2-1,2t,t^2+1)$. 
Each parameter $t$ yields a point on the unit circle in homogeneous $\mathbb{RP}^2$ coordinates. We are missing one point $(1,0,1)$ on $\mathcal{X}$  
that may be associated with $t=\infty$. Replacing the coordinate $t$ by its negative $-t$ corresponds to a reflection in the $x$ axis. Replacing $t$ with its reciprocal $1/t$ corresponds to a reflection in the $y$ axis.

\noindent
{\it Remark:} An alternative approach would be to introduce homogeneous  $\mathbb{RP}^1$ coordinates and replace $t$ by a point $(t_1,t_2)$ on $\mathbb{RP}^1$ the stereographic projection would then  yield
coordinates  $(t_1^2-t_2^2,2t_1,t_1^2+t_2^2)$ in $\mathbb{RP}^2$. We intentionally do not take this approach to keep the amount of necessary variables low.
In that framework reflection on the $y$-axis correspond to swapping the roles of $t_1$ and~$t_2$.

The tangent of such a point to $\mathcal{X}$  has coordinates 
$\varphi(t):=(t^2-1,2t,-t^2-1)$.
Intersecting two such tangents by performing a vector product gives
$\varphi(s)\wedge \varphi(t)=2 (s - t) \left(-1 + s t, s + t, 1 + s t\right)$.
Dividing by the factor $2 (s - t)$ does not change the position of the points, but resolves the removable singularity when $s$ and $t$ coincide. We define an operation $\wedge(s,t):=\left(-1 + s t, s + t, 1 + s t\right)$ that creates the intersection of the two tangents associated with $s$ and $t$. This operator is obviously commutative in $s$ and $t$.

We now can assume that 
$
\varphi(s),
\varphi(t),
\varphi(u),
\varphi(v)
$ are four distinct tangents to $\mathcal{X}$ (they will play the roles of
$l_0,l_a,l_b,l_{a+b}$).
The points $P,P',Q,Q'$ in our lemma correspond to pairwise intersections of such points 
\[
P=\wedge(s,t),\ 
P'=\wedge(u,v),\ 
Q=\wedge(s,u),\ 
Q'=\wedge(t,v).
\]
Now we will calculate a conic $\mathcal{B}$ through $P$ and $P'$
and a conic $\mathcal{G}$ through $Q$ and $Q'$. Since our projective 
transformation allowed us to have all conics in diagonal form we can calculate the conics in the following way. If $P=(x,y,z)$ is represented by
homogeneous coordinates  we define its squared coordinates by
$P^2:=(x^2,y^2,z^2)$. We define $P'^2, Q^2, Q'^2$ analogously. $P$ lies on a conic $\mathcal{B}$ with diagonal entries $D=(a,b,c)$ if
and only if $\langle D,P^2\rangle=0$ with 
$\langle \ldots,\ldots\rangle$ representing the canonical inner product.
Thus we can calculate the diagonal entries of $\mathcal{B}$ by
$P^2 \times P'^2$. Similarly, we get the diagonal entries of 
$\mathcal{G}$ by
$Q^2 \times Q'^2$.

\medskip
At this point a little care is necessary to avoid degenerate calculations.
If $P^2 =\lambda P'^2$, then the exterior product results in the zero-vector and the 
corresponding conic is not defined. We call such a situation {\it special}.
We will discuss the exact geometric situation of {\it special} positions later.
For now we assume that we do not have a special situation and both conics 
$\mathcal{B}$ and $\mathcal{G}$  are properly defined.
Elementary calculations show that diagonal entries of the two matrices are 
\[
\begin{array}{l}
4 (s t - u v) (-1 +  s t u v),\\
 (1 + s t)^2 (u + v)^2 - (s + t)^2 (1 +  u v)^2,\\
  -(-1 + s t)^2 (u + v)^2 + (s + t)^2 (-1 + u v)^2\end{array}
\]
for $\mathcal{B}$ and
\[
\begin{array}{l}
4 (s u - t v) (-1 +  s t u v),\\ 
(1 + s u)^2 (t + v)^2 - (s + u)^2 (1 + t v)^2,\\
     -(-1 + s u)^2 (t + v)^2 + (s + u)^2 (-1 + t v)^2
\end{array}
\]
for  $\mathcal{G}$. Since the formulas in coordinate representation are not very insightful and all calculations are very elementary we from now on argue by using computer algebra.
We can compute the tangents at $P,P',Q,Q'$ by multiplying the points with the  matrix of the respective conic.
In a sense, the only important part here is that 
 those values can be calculated in a straightforward way from the corresponding values of $s,t,u,v$. 
Given the values of the conics and of the points we may easily 
compute the tangents and check if they turn out to be coincident.
It suffices to show that 
\[
\det(\mathcal{B}\cdot P,\mathcal{B}\cdot P',\mathcal{G}\cdot Q)=0
\quad \text{and}\quad
\det(\mathcal{B}\cdot P,\mathcal{B}\cdot P',\mathcal{G}\cdot Q')=0.
\]
This can be easily done by a computer algebra system. The following screenshot from a  Mathematica session serves as a witness:
\smallskip

\noindent
\includegraphics[width=.7\textwidth]{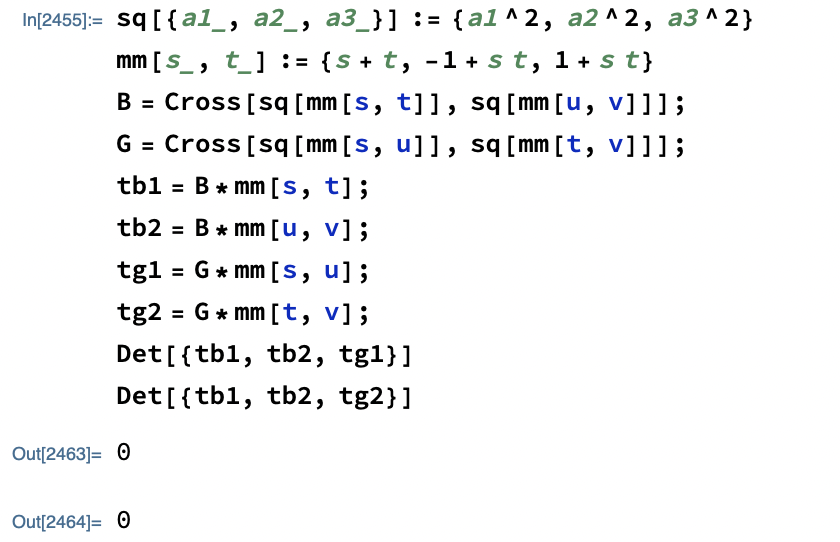}

\medskip
\subsection{The special cases}
Let us now come to the treatment of situations where the conics cannot be calculated by the above procedure since the points $P$ and $P'$ or
$Q$ and $Q'$ are are in special position to each other. In fact this is the situation for which the fact that the conics are taken from a Poncelet grid 
really becomes essential.

If $P=(x,y,1)$ and $P'=(x',y',1)$ are finite points given by homogeneous coordinates then they are in special position with resepct to each other of
$x=\pm x'$ and $y=\pm y'$ (exactly in that case we have $P^2=P'^2$. In other words, a finite point $P'$ is in special position to $P$ if it forms one of the corners of an axis-symmetric rectangle with initial vertex $P$.

Considering the fact that replacing $t$ by $-t$ (resp $1/t$) corresponds to reflection of $\varphi(t)$ in the $y$ or $x$ axis we see that we get 
points $\wedge(u,v)$ in special position to $\wedge(s,t)$ for the eight possible choices  in which  $\{u,v\}$ taken as a set equals one of the sets
\[
 \{s,t\}, \ 
 \{-s,-t\},\  
 \{1/s,1/t\},\  
 \{-1/s,-1/t\} 
 \]
The first case can be excluded since this leads to a pair of identical lines in our lemma which was forbidden by  our initial assumptions.
We will only discuss the second case, which leads to the two possibilities
$(u,v)=(-s,-t)$ and
$(u,v)=(-t,-s)$. 
The remaining cases can be treated by similar calculations.
In fact they could alternatively be achieved by suitable coordinate transformations.

\noindent\underline{\bf The case $\mathbf{u=-s}$ and $\mathbf{v=-t}$:}
In this case the tangents $\varphi(s)$ and $\varphi(u)$ are symmetric with respect to the $x$-axis. Their intersection $\wedge(s,u)$ lies on the $x$-axis. Similarly, $\wedge(t,v)$ lies on the $x$-axis. Thus the conic $\mathcal{G}$ is degenerate and forms a double line coinciding with the $x$-axis.
The tangents $\mathcal{G}\cdot Q$ and $\mathcal{G}\cdot Q'$ are the $x$-axis itself. 
Due to the symmetry of the entire configuration  the tangents
 $\mathcal{B}\cdot P$ and  $\mathcal{B}\cdot P'$
 intersect on the $x$-axis as well. This proves the lemma for that case.

\begin{figure}[ht]
\centering
\includegraphics[width=.48\textwidth]{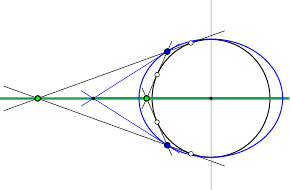}\quad\ \
\includegraphics[width=.45\textwidth]{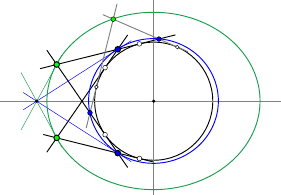}

\begin{picture}(0,0)
\put(-60,35){\footnotesize {${s}$}}
\put(-75,48){\footnotesize {${t}$}}
\put(-75,78){\footnotesize {${v{=}{-}t}$}}
\put(-60,89){\footnotesize {${u{=}-s}$}}
\put(-80,29){\footnotesize {$P$}}
\put(-80,94){\footnotesize {$P'$}}
\put(-150,70){\footnotesize {$Q$}}
\put(-97,70){\footnotesize {$Q'$}}
\put(90,35){\footnotesize {${s}$}}
\put(75,46){\footnotesize {${t}$}}
\put(68,66){\footnotesize {${x}$}}
\put(75,78){\footnotesize {${u{=}{-}t}$}}
\put(87,87){\footnotesize {${v{=}-s}$}}
\put(115,97){\footnotesize {$y$}}
\put(75,97){\footnotesize {$P'$}}
\put(73,25){\footnotesize {$P$}}
\put(40,30){\footnotesize {$Q$}}
\put(42,90){\footnotesize {$Q'$}}
\end{picture}
\caption{The situation in the two special cases.}	
\label{fig:CGT}
\end{figure}

\noindent\underline{\bf The case $\mathbf{u=-t}$ and $\mathbf{v=-s}$:}
Now we come to the most interesting case that requires the fact that we come from a Poncelet polygon.
In this case we  have
$P=\wedge(s,t),\ 
P'=\wedge(-t,-s),\ 
Q=\wedge(s,-t),\ 
Q'=\wedge(t,-s).$
By the symmetry of the situation the tangents at $\mathcal{B}\cdot P$ and $\mathcal{B}\cdot P'$ will intersect in the $x$ axis. 
Let us assume that the point of intersection has coordinates $A=(0,a,1)$.
The tangents
 $\mathcal{G}\cdot Q$ and $\mathcal{G}\cdot Q'$ will also intersect on the $x$-axis. However, it is not a priori clear that they will intersect in the same point.
The point $A$ can be freely chosen on the $x$-axis and from this point and 
$s$ and $t$ we can reconstruct the entirity of the (diagonal) matrix $\mathcal{B}$ in a unique way.
The diagonal entries are
\[(s + t)^2 (1 + s t), (1 + a + (-1 + a) s t) (-1 + s^2 t^2), -a (s + 
   t)^2 (-1 + s t).\]

\noindent
It is straightforward to check that this conic $\mathcal{B}$ passes through
$P$ and that the tangent $\mathcal{B}\cdot P$ intersects the $y$-axis at point $(1,0,a)$. It is demonstrated by the following Mathematica session.

\noindent
\includegraphics[width=.9\textwidth]{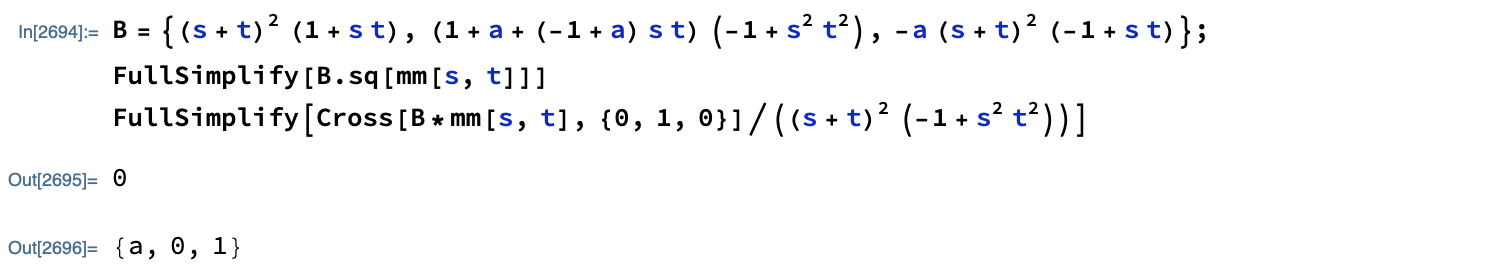}

For every choice of $a$ we get a corresponding choice of the conic $\mathcal{B}$. We now extend the Poncelet chains $(s,t)$ and $(-t,-s)$
by one further point using the coordinates of $\mathcal{B}$.
We call the resulting coordinates for the two points $x$ and $y$ respectively. 
Again these points can be calculated by elementary arithmetic operations from the previous data.

In terms of our original lines of the Poncelet chains 
the sequences $s,t,x$ and $-t,-s,y$ correspond to the lines:
\[
l_0,l_a,l_{2a}\quad\text{and}\quad l_b,l_{a+b},l_{2a+b},
\]
respectively. The only thing left to show is to see that these values are compatible with the conic $\mathcal{G}$. In other words we must show.
that $\wedge(x,y)$ is on $\mathcal{G}$. The following computer algebra session witnesses that.

\noindent
\includegraphics[width=.9\textwidth]{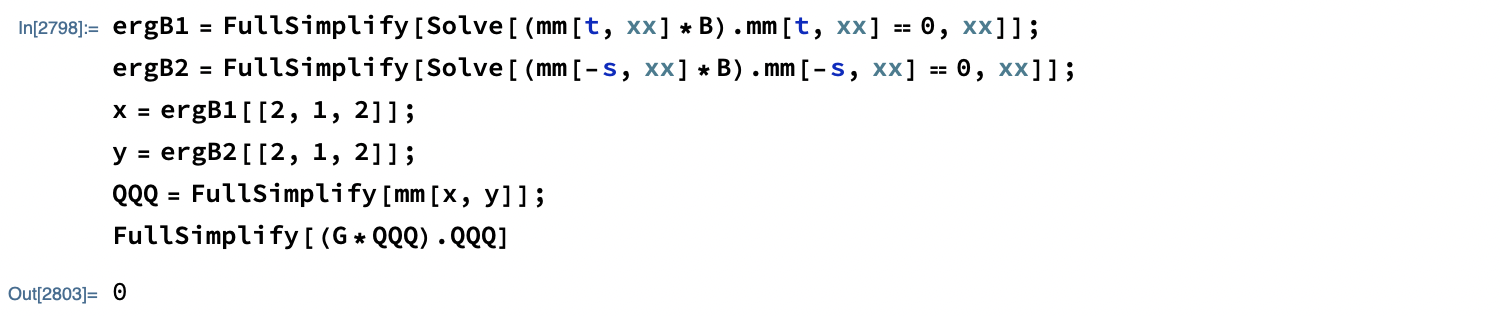}

\noindent
Although, in the first two lines a Solve operator is applied, under our setup
it results in a rational expression. The  results are extracted in lines 3 and 4.
Obtaining a $0$ at the end finishes the Proof of Lemma 1.

\medskip

\noindent
{\footnotesize
{\sc Department of Mathematics \& Statistics, University of Alaska Fairbanks, USA}\\
Email address: {\tt lwberman@alaska.edu}

\noindent
{\sc Bolyai Institute, University of Szeged, Hungary}\\
Email address: {\tt gevay@math.u-szeged.hu}

\noindent
{\sc Department of Mathematics, Technical University of Munich, Germany}\\
Email address: {\tt richter@tum.de}

\noindent
{\sc Department of Mathematics, Penn State University, USA}\\
Email address: {\tt tabachni@math.psu.edu}

}
\end{document}